\begin{document}

\title{Second order asymptotical regularization methods for inverse problems in partial differential equations}
\shorttitle{Second order asymptotical regularization}

\author{%
{\sc
Ye Zhang\thanks{Email: ye.zhang@mathematik.tu-chemnitz.de} \\[2pt]
School of Science and Technology, \"{O}rebro University,\\
70182 \"{O}rebro, Sweden and\\
Faculty of Mathematics, Chemnitz University of Technology,\\
09107 Chemnitz, Germany \\[6pt]
{\sc and}\\[6pt]
{\sc Rongfang Gong}\thanks{Corresponding author. Email: grf\_math@nuaa.edu.cn}\\[2pt]
Department of Mathematics, Nanjing University of Aeronautics and Astronautics,
211106 Nanjing, China}
}
\shortauthorlist{Y. Zhang and R. Gong}

\maketitle

\begin{abstract}
{We develop Second Order Asymptotical Regularization (SOAR) methods for solving
inverse source problems in elliptic partial differential equations with both Dirichlet
and Neumann boundary data. We show the convergence results of SOAR with the fixed damping
parameter, as well as with a dynamic damping parameter, which is a continuous analog
of Nesterov's acceleration method. Moreover, by using Morozov's discrepancy principle
together with a newly developed total energy discrepancy principle, we prove that the
approximate solution of SOAR weakly converges to an exact source function as the measurement noise goes to zero. A damped symplectic scheme, combined with the finite element method, is developed for the numerical implementation of SOAR, which yields a novel iterative regularization scheme for solving inverse source problems. Several numerical examples are given to show the accuracy and the acceleration effect of SOAR. A comparison with the state-of-the-art methods is also provided. }
{Inverse source problems; Partial differential equations; Asymptotical regularization; Convergence; Finite element methods; Symplectic methods.}
\end{abstract}

\section{Introduction}
\label{sec;introduction}

In this paper, inspired by the asymptotical regularization (\cite{Vainikko1986,Tautenhahn-1994,ZhangHof2018}), we establish a new framework for stably solving inverse problems in partial differential equations (PDEs). To present the ideas, we take the following inverse source problem as an example: given $g_1$ and $g_2$ on $\Gamma$, find $p$ such that $(p,u)$ satisfies
\begin{equation}\label{BVP}
\left\{\begin{array}{l}
-\triangle u + u  = p\chi_{\Omega_0} \textrm{~in~} \Omega, \\
u = g_1 \textrm{~and~} \frac{\partial u}{\partial \mathbf{n}} = g_2  \textrm{~on~} \Gamma,
\end{array}\right.
\end{equation}
where $\Omega\subset \mathbb{R}^d$ ($d=2,3$) represents a bounded domain with a smooth boundary $\Gamma$, $\partial/\partial \mathbf{n}$ stands for the unit outward normal derivative, $\Omega_0\subset\Omega$ is known as a permissible region of the source function, and $\chi$ is the indicator function such that $\chi_{\Omega_0}(x)=1$ for $x\in \Omega_0$, while $\chi_{\Omega_0}(x)=0$, when $x\not\in \Omega_0$. Note that the framework proposed in this paper can also be applied to various linear and nonlinear inverse problems in PDEs, e.g. inverse source problems in parabolic or hyperbolic PDEs, parameter identification problems in PDEs, etc.

The variational methods of solving (\ref{BVP}) are usually classified into two groups: the boundary fitting formulation and the domain fitting formulation. For the boundary fitting formulation, we use one of the boundary conditions to form a boundary value problem, and the remaining boundary condition as the object-optimized function to determine the source term. For instance, the following formulation can be considered (\cite{Han:2006})
\begin{equation}
\min_{p} \frac{1}{2} \|u(p)-g_1\|^2_{0,\Gamma}, \label{problem1}
\end{equation}
where $u(p)$ is the weak solution in $H^1(\Omega)$ of (\ref{BVP}) with the Neumann boundary condition, and $\|\cdot\|_{0,\Gamma}$ is the standard norm of $L^2(\Gamma)$.

The Kohn-Vogelius method is certainly the most prominent domain fitting formulation for the inverse source problem (\ref{BVP}). In this approach, the following optimization problem is adopted (\cite{Afraites:2007,Song-2012}):
\begin{equation}
\min_{p} \frac{1}{2} \|u_1(p)-u_2(p)\|^2_{0,\Omega},
\label{KV}
\end{equation}
where $u_{1}, u_{2}\in H^1(\Omega)$ are the weak solutions of $-\triangle u_{1,2} + u_{1,2}  = p\chi_{\Omega_0}$
with Dirichlet and Neumann data respectively, and $\|\cdot\|_{0,\Omega}$ is the standard norm of $L^2(\Omega)$.

However, both formulations (\ref{problem1}) and (\ref{KV}) use the Neumann and Dirichlet data separately. In~\cite{Cheng:2014}, a novel coupled complex boundary method (CCBM) was introduced. The idea of CCBM is to couple the Neumann data and Dirichlet data in a Robin boundary condition, which leads to the following optimization problem
\begin{equation}
\min_{p} \frac{1}{2} \|u_{im}\|^2_{0,\Omega}.
\label{CCBM}
\end{equation}
where $u=u_{re}+i u_{im}$ ($i=\sqrt{-1}$ is the imaginary unit) solves
\begin{equation}\label{CCB}
\left\{\begin{array}{ll}
-\triangle u + u  = p \chi_{\Omega_0} & \textrm{~in~} \Omega, \\
\frac{\partial u}{\partial \mathbf{n}} + i u = g_2 + i g_1 & \textrm{~on~} \Gamma.
\end{array}\right.
\end{equation}

Obviously, all formulations (\ref{problem1}), (\ref{KV}) and (\ref{CCBM}) are still ill-posed, since a general source could not be determined uniquely by the boundary measurements, see e.g.,~\cite{Isakov:1990,Alves:2009}. Moreover, the mapping from the source function to the boundary data is a compact operator in Hilbert spaces, which implies the unboundedness of its inversion operator. Therefore, for the problem with noisy boundary data, regularization methods should be employed for obtaining stable approximate solutions. Loosely speaking, three groups of regularization methods exist: descriptive regularization methods, variational regularization methods and iterative regularization methods.

Descriptive regularization uses a priori information of the solution to overcome the ill-posedness of the original inverse problem. For inverse source problems, under the assumption of sourcewise representation of the unknown source function, the authors in~\cite{ZhangJIIP2018} combined the expanding compacts method and CCBM to propose a new efficient regularization method. However, in this paper, we are interested in a more general case that no a priori information about the solution is available.

Tikhonov regularization should be the most prominent variational regularization method. Denote $V(p)$ as the objective functional in (\ref{problem1}), (\ref{KV}) or (\ref{CCBM}).  With the
Tikhonov regularization, the original inverse source problem (\ref{BVP}) is converted to the following minimization problem:
\begin{equation}
p_\varepsilon = \mathop{\arg\min}_{p} V_\varepsilon(p), \qquad V_\varepsilon(p):=V(p) + \frac{\varepsilon}{2} \|p\|^2_{0,\Omega_0},
\label{Regularization}
\end{equation}
where $\varepsilon>0$ is a regularization parameter chosen in a special way using the noisy boundary data.
Under certain assumptions, (\ref{Regularization}) admits a unique solution
$p_\varepsilon$, which converges to the minimal norm solution of (\ref{BVP}) with the noise-free boundary data (\cite{Han:2006,Afraites:2007,Cheng:2014}).

In this paper, our focus is on the iterative regularization approaches, since, from a computational viewpoint, the iterative approach seems more attractable, especially for large-scale problems. The most famous iterative regularization approach should be the Landweber iteration, which is defined by (cf., e.g.,~\cite{engl1996regularization,Kaltenbacher-2008})
\begin{eqnarray}\label{Linear}
x_{k+1} = x_{k} - \Delta t \nabla V (p),
\end{eqnarray}
which can be viewed as a discrete analog of the following first order evolution equation
\begin{eqnarray}\label{FisrtFlow}
\dot{x}(t) = -\nabla V (p(t)),
\end{eqnarray}
where $\nabla$ denotes the gradient of $V$, and $t$ is the introduced artificial time. The formulation (\ref{FisrtFlow}) is known as the asymptotical regularization, or the Showalter's method. The regularization property of (\ref{FisrtFlow}) can be analyzed through a proper choice of the terminating time.

It is well known that the original Landweber method works quite slowly. Thus, accelerating strategies are usually adopted in practice. In recent years, there has been increasing evidence to show that the second order iterative methods exhibit remarkable acceleration properties for stably solving ill-posed problems. The most well-known methods are the Nesterov acceleration scheme (\cite{Neubauer-2017}), the $\nu$-method~\cite[\S~6.3]{engl1996regularization}, and the two-point gradient method (\cite{Hubmer-2017}). Recently, the authors in~\cite{ZhangHof2018} have established an initial theory of the second order asymptotical regularization method with fixed damping parameter for solving general linear ill-posed inverse problems. In this paper, inspired by the development of second order dynamics for accelerating the convergence of iterative regularization methods in \cite{Hubmer-2017,ZhangHof2018}, we develop a second order asymptotical regularization method for solving the inverse source problem (\ref{BVP}), i.e., we consider the second order evolution equation
\begin{eqnarray}\label{SecondFlow}
\left\{\begin{array}{ll}
\ddot{p}(t) + \eta(t) \dot{p}(t) + \nabla V (p(t)) = 0,  \\
p(0)=p_0, \quad \dot{p}(0)= \dot{p}_0,
\end{array}\right.
\end{eqnarray}
where $(p_0,\dot{p}_0) \in P\times P$ is the prescribed initial data, $\eta>0$ is the so-called damping parameter, which may or may not depend on the artificial time $t$, and $P$ is the solution space, which will be precisely defined later. It is not difficult to show that the evolution equation (\ref{SecondFlow}) with the following specific choice of discretization parameters
\begin{eqnarray*}
\left\{\begin{array}{l}
\Delta t_k= 4 \frac{(2k+2\nu-1)(k+\nu-1)}{(k+2\nu-1)(2k+4\nu-1)}, \vspace{3mm}\\
\eta_k= \frac{(k+2\nu-1)(2k+4\nu-1)(2k+2\nu-3) - (k-1)(2k-3)(3k+3\nu-1)}{4(2k+2\nu-3)(2k+2\nu-1)(k+\nu-1)},
\end{array}\right.
\end{eqnarray*}
yields the $\nu$-method. Moreover, as demonstrated in~\cite{Su-2016}, (\ref{SecondFlow}) with a special choice of damping parameter can be considered as an infinite dimensional extension of the Nesterov's scheme in the following sense.

\begin{theorem}
Let $\{p_k\}$ be the sequence, generated by the Nesterov's scheme with parameters $(\alpha,\omega)$, see (\ref{Nesterov}) for details. Then, for all fixed $T > 0$: \begin{eqnarray*}
\lim\limits_{\omega\to0} \max\limits_{0\leq k \leq T/\sqrt{\omega}} \|p_k - p(k\sqrt{\omega})\|_P =0,
\end{eqnarray*}
where $p(\cdot)$ is the solution of (\ref{SecondFlow}) with $\eta(t)=\alpha/t$.
\end{theorem}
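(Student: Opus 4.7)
The plan is to derive the ODE as the formal continuum limit of the Nesterov recursion and then bootstrap this into a rigorous uniform convergence statement on $[0,T]$. The Nesterov scheme with parameters $(\alpha,\omega)$ reads
\begin{equation*}
p_{k+1}=y_k-\omega\nabla V(y_k),\qquad y_k=p_k+\tfrac{k-1}{k+\alpha-1}(p_k-p_{k-1}),
\end{equation*}
so the natural ansatz is to introduce the continuous time $t_k:=k\sqrt{\omega}$ and postulate that $p_k\approx p(t_k)$ for a smooth curve $p(\cdot)$. I would Taylor expand
\begin{equation*}
p_{k+1}-p_k=\sqrt{\omega}\,\dot p(t_k)+\tfrac{\omega}{2}\ddot p(t_k)+O(\omega^{3/2}),\quad p_k-p_{k-1}=\sqrt{\omega}\,\dot p(t_k)-\tfrac{\omega}{2}\ddot p(t_k)+O(\omega^{3/2}),
\end{equation*}
together with $\frac{k-1}{k+\alpha-1}=1-\frac{\alpha\sqrt{\omega}}{t_k}+O(\omega)$ and $\nabla V(y_k)=\nabla V(p(t_k))+O(\sqrt{\omega})$. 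Substituting into the Nesterov recursion and equating the $\omega$-order terms cancels $\sqrt{\omega}\dot p$ on both sides and yields
\begin{equation*}
\ddot p(t_k)+\tfrac{\alpha}{t_k}\dot p(t_k)+\nabla V(p(t_k))=O(\sqrt{\omega}),
\end{equation*}
which is exactly (\ref{SecondFlow}) with $\eta(t)=\alpha/t$.

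To turn this formal calculation into the stated $L^\infty$-convergence on compact time intervals, I would first establish existence and uniqueness of a $C^2$ solution $p(\cdot)$ of (\ref{SecondFlow}) on $[0,T]$ with the initial data that match the Nesterov initialisation $p_0=\dot p_0$ coming from $p_1=p_0-\omega\nabla V(p_0)$. Then I would define the grid error $e_k:=p_k-p(t_k)$ and derive a discrete second-order recursion for $e_k$ by subtracting the Nesterov update from the same update applied to $p(t_k)$ (the latter producing a consistency residual of size $O(\omega^{3/2})$ on every step by the Taylor estimates above). Lipschitz continuity of $\nabla V$ on a tube around the continuous trajectory (which is easy to secure on $[0,T]$ since $P$ is a Hilbert space and $V$ is quadratic for all three functionals in (\ref{problem1})--(\ref{CCBM})) lets me close the recursion and invoke a discrete Gronwall inequality, yielding $\max_{0\le k\le T/\sqrt{\omega}}\|e_k\|_P=O(\sqrt{\omega})$, which implies the claim.

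The principal obstacle is the coefficient $\alpha/t$, which is singular at the initial time: both the estimate $\frac{k-1}{k+\alpha-1}=1-\frac{\alpha\sqrt{\omega}}{t_k}+O(\omega)$ and the Gronwall closure degenerate for $k=O(1)$, i.e.\ on the boundary layer $t\in[0,c\sqrt{\omega}]$. I would handle this by a separate analysis of the first $O(1/\sqrt{\omega})$ many indices using the uniform bound $\bigl|\tfrac{k-1}{k+\alpha-1}\bigr|\le 1$, which gives $\|p_k\|_P,\|\dot p(t)\|_P$ uniformly bounded by a constant independent of $\omega$ and $t$, so that the error contribution from this layer tends to zero with $\omega$; from $t=c\sqrt{\omega}$ onwards the coefficient $\alpha/t_k$ is bounded and the Gronwall argument applies cleanly. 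A secondary technical point, namely the regularity of the map $p\mapsto\nabla V(p)$ required for the Taylor residuals, is straightforward here because $V$ is a composition of a continuous affine map $p\mapsto u(p)$ with a quadratic norm, so $\nabla V$ is globally Lipschitz.
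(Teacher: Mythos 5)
First, note that the paper itself offers no proof of this theorem: it is quoted from \cite{Su-2016}, so the only meaningful comparison is with the argument there, and your skeleton (formal Taylor expansion to identify the ODE, then a consistency-plus-stability argument on $[0,T]$) is indeed the route taken in that reference. The formal derivation, the global Lipschitz continuity of $\nabla V$ (which here is affine in $p$, since $\nabla V(p)=w_{im}(p)\chi_{\Omega_0}$ is obtained from two linear elliptic solves), and the identification of the singular coefficient $\alpha/t$ as the sole real difficulty are all correct. Two small slips: the matching initial data are $p(0)=p_0$, $\dot p(0)=0$ (not ``$p_0=\dot p_0$''), which is forced on the ODE by the singularity; and the paper's scheme (\ref{Nesterov}) evaluates the gradient at $\mathbf{p}^k$ rather than at the extrapolated point, a difference of order $\omega^{3/2}$ per step that is harmless for the limit.

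The genuine gap is in your boundary-layer argument, which as written is internally inconsistent and, taken literally, fails. You propose to treat separately ``the first $O(1/\sqrt{\omega})$ many indices'' and to run Gronwall ``from $t=c\sqrt{\omega}$ onwards,'' but these describe two different splits and neither works on its own. If the layer is $k\le K$ for fixed $K$ (i.e.\ $t\le K\sqrt{\omega}$), then on the remaining interval the coefficient $\alpha/t_k$ is only bounded by $\alpha/(K\sqrt{\omega})$, so the Gronwall constant blows up as $\omega\to0$. If instead the layer is $t\le c$ for fixed $c$ (i.e.\ $k\le c/\sqrt{\omega}$ indices), the coefficient is bounded by $\alpha/c$ afterwards, but the error accumulated inside the layer does \emph{not} tend to zero with $\omega$; what one can show (using $\bigl|\tfrac{k-1}{k+\alpha-1}\bigr|\le1$, $p_1-p_0=O(\omega)$ and $\dot p(0)=0$) is that both the iterates and the continuous trajectory stay within $O(c^2)$ of $p_0$ on $[0,c]$ uniformly in $\omega$, so the layer error is small in $c$, not in $\omega$. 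The proof therefore requires a two-parameter limit --- fix $c$, let $\omega\to0$ to kill the error on $[c,T]$ via Gronwall with constant depending on $\alpha/c$, then let $c\to0$ --- and this ordering must be stated explicitly; it also means your claimed uniform rate $O(\sqrt{\omega})$ up to $k=0$ is an over-claim (the cited result proves convergence without a rate precisely because of this layer). With the split repaired in this way the argument closes and establishes the theorem as stated.
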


The remainder of the paper is structured as follows: Section 2 discusses some properties of the solution of evolution equation (\ref{SecondFlow}). The convergence analysis for exact and noisy data are presented in Sections 3 and 4, respectively. Finite dimensional approximation of our method is proposed in Section 5, where we develop a novel second order iterative regularization algorithm. Some numerical examples, as well as a comparison with three existing iterative regularization methods, are presented in Section 6. Finally, concluding remarks are given in Section 7.

\section{Properties of the second order evolution equation}

For clarity, we only consider the formulation (\ref{CCBM}) in this paper. Let us first introduce the notations for the function spaces that are used in this paper. For a set $G$ (e.g., $\Omega$, $\Omega_0$ or $\Gamma$), denote by $W^{m,s}(G)$ the Sobolev space with norm $\|\cdot\|_{m,s,G}$. In particular, $L^s(G):=W^{0,s}(G)$. Moreover, $H^m(G)$ represents $W^{m,2}(G)$
with the corresponding inner product $(\cdot,\cdot)_{m,G}$ and norm $\|\cdot\|_{m,G}$. Let $\mathbf{H}^m(G)$
be the complex version of $H^m(G)$ with inner product $((\cdot,\cdot))_{m,G}$ and norm $|\|\cdot\||_{m,G}$
defined as follows: $\forall u,v \in \mathbf{H}^m(G), ((u,v))_{m,G}=(u,\bar{v})_{m,G}, |\|u\||_{m,G}=((u,u))^{1/2}_{m,G}$, where $\bar{v}$ is the conjugate complex of $v$. Denote $P= L^2(\Omega_0)$ or $H^1(\Omega_0)$ as the space for the source function $p$. Its corresponding inner product and norm are given by $(\cdot,\cdot)_{P}$ and $\|\cdot\|_{P}$, respectively.

Assume that $g_1\in H^{1/2}(\Gamma)\cap L^\infty(\Gamma)$ and $g_2\in L^{\infty}(\Gamma)$. Moreover, instead of the exact data $\{g_1,g_2\}$, we have only the noisy data $g^\delta_1, g^\delta_2\in L^{\infty}(\Gamma)$ such that
\begin{equation}\label{noiseLevel1}
\|g^\delta_1 -g_1\|_{\infty,\Gamma}\leq \delta, \quad \|g^\delta_2 -g_2\|_{\infty,\Gamma}\leq \delta,
\end{equation}
where $\delta>0$ denotes the error level of the measurement. Then, the CCBM for inverse source problem (\ref{BVP}) with noisy data $\{g^\delta_1,g^\delta_2\}$ can be formulated as
\begin{equation}
\inf_{p\in P} V(p) = \inf_{p\in P} V(p;\delta) =  \inf_{p\in P} \frac{1}{2} \|u_{im}(p)\|^2_{0,\Omega},
\label{problemFinal}
\end{equation}
where $u=u_{re}+i u_{im}$ solves
\begin{equation}\label{CCBNoise}
\left\{\begin{array}{ll}
-\triangle u + u  = p \chi_{\Omega_0} & \textrm{~in~} \Omega, \\
\frac{\partial u}{\partial \mathbf{n}} + i u = g^\delta_2 + i g^\delta_1 & \textrm{~on~} \Gamma.
\end{array}\right.
\end{equation}

Suppose that system (\ref{BVP}) has at least one solution $(p,u)$ for noise-free data and denote by $p^\dagger$ one of the solutions, i.e.
\begin{equation}\label{exactp}
p^\dagger\in \mathop{\arg\min}_{p\in P} V(p;0).
\end{equation}

\begin{proposition}
~\cite[Proposition 1]{ZhangYe2018} The Fr\'echet derivative of $V(p)$, defined in (\ref{problemFinal}),
is the imaginary part of the solution to the adjoint problem
\begin{eqnarray}\label{adjointCCBN}
\left\{\begin{array}{ll}
-\triangle w + w  = u_{im}(p) & \textrm{~in~} \Omega, \\
\frac{\partial w}{\partial \mathbf{n}} + i w = 0 & \textrm{~on~} \Gamma,
\end{array}\right.
\end{eqnarray}
where $u_{im}$ is the imaginary part of $u$, the solution of (\ref{CCBNoise}), i.e., $\nabla_p V(p) = w_{im}(p)\chi_{\Omega_0}$.
\end{proposition}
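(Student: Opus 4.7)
The plan is to carry out a standard variational differentiation in three stages: linearise the forward map $p\mapsto u(p)$, expand $V(p+h)-V(p)$ to identify the leading-order term, and then re-express that term by means of the adjoint equation (\ref{adjointCCBN}).

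First, since the state system (\ref{CCBNoise}) is linear in $p$, for any direction $h\in P$ the increment $u':=u(p+h)-u(p)$ is itself the weak solution of the Robin problem
\begin{equation*}
-\triangle u' + u' = h\chi_{\Omega_0} \text{ in }\Omega,\qquad \tfrac{\partial u'}{\partial\mathbf{n}} + i u' = 0 \text{ on }\Gamma,
\end{equation*}
with zero boundary data. The well-posedness estimate for the CCBM problem gives $\|u'\|_{1,\Omega}\le C\|h\|_P$, so $h\mapsto u'$ is a bounded linear operator. Writing $u_{im}(p+h)=u_{im}(p)+u'_{im}$ and expanding the $L^2$-norm directly,
\begin{equation*}
V(p+h)-V(p) = \int_\Omega u_{im}(p)\,u'_{im}\,dx + \tfrac{1}{2}\int_\Omega (u'_{im})^{2}\,dx,
\end{equation*}
and the quadratic term is bounded by $\tfrac{1}{2}\|u'\|_{0,\Omega}^{2}\le C\|h\|_P^{2}$, which is the required $o(\|h\|_P)$ remainder. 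The candidate Fr\'echet derivative therefore acts on $h$ as $\int_\Omega u_{im}(p)\,u'_{im}\,dx$.

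The main step is to convert this integral into a linear form in $h$ by bringing in the adjoint state $w$. The key technical point is to use the \emph{symmetric} complex bilinear form (no conjugation)
\begin{equation*}
b(\phi,\psi):=\int_\Omega\bigl(\nabla\phi\!\cdot\!\nabla\psi + \phi\psi\bigr)\,dx + i\int_\Gamma \phi\psi\,ds,
\end{equation*}
obtained by multiplying the PDE by $\psi$ (not $\bar\psi$) and applying Green's identity together with the Robin condition $\partial_\mathbf{n}\phi + i\phi = 0$. In this form the equations for $u'$ and for $w$ read, for every complex test function $\psi$,
\begin{equation*}
b(u',\psi) = \int_{\Omega_0} h\,\psi\,dx, \qquad b(w,\psi) = \int_\Omega u_{im}(p)\,\psi\,dx.
\end{equation*}
Taking $\psi = w$ in the first identity and $\psi = u'$ in the second, and invoking the symmetry $b(u',w)=b(w,u')$, yields
\begin{equation*}
\int_\Omega u_{im}(p)\,u'\,dx = \int_{\Omega_0} h\,w\,dx.
\end{equation*}
Now, since $p$, $h$ and $u_{im}(p)$ are real-valued, taking imaginary parts gives $\int_\Omega u_{im}(p)\,u'_{im}\,dx = \int_\Omega w_{im}(p)\chi_{\Omega_0}\,h\,dx$, which identifies the gradient as $\nabla_p V(p)=w_{im}(p)\chi_{\Omega_0}$.

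The principal obstacle is the bookkeeping between the complex-valued states $u,w$ and the real-valued source $p$: one has to work with the symmetric bilinear form $b(\cdot,\cdot)$ rather than the Hermitian sesquilinear form naturally associated with (\ref{CCBNoise}), so that Green's identity produces a genuinely symmetric pairing and allows the swap $b(u',w)=b(w,u')$; and one must postpone taking imaginary parts until the very end. Once that subtlety is handled, the derivative formula follows from a single application of bilinear symmetry, and the $o(\|h\|_P)$ estimate of the remainder is immediate from the a priori bound on $u'$.
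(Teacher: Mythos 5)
Your proof is correct. The paper itself does not prove this proposition but defers it to the cited reference, and your argument is the standard adjoint-state derivation used there: linearize the (affine-in-$p$) forward map, expand the quadratic functional, and identify the linear term via the \emph{symmetric} (non-conjugated) bilinear form so that $b(u',w)=b(w,u')$ converts $\int_\Omega u_{im}(p)\,u'\,dx$ into $\int_{\Omega_0}h\,w\,dx$ before taking imaginary parts. The handling of the complex Robin boundary term and the $o(\|h\|_P)$ remainder estimate are both sound.
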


It is not difficult to show that $V''(p) q^2=\|u_{im}(q)-u_{im}(0)\|^2_{0,\Omega}$. Hence, $V(p)$ is convex.

Now we are in a position to introduce the second order asymptotical regularization for solving the inverse source problem (\ref{BVP}).

\begin{definition}
An element $p^\delta(x,T^*)\in P$ with an appropriate selected terminating time point $T^*=T^*(\delta)$ is called a second order asymptotical regularized solution if $p^\delta(x,t)$ is the solution to the following Cauchy problem
\begin{equation}\label{DDBNandDDS}
\left\{\begin{array}{ll}
\ddot{p}^\delta(x,t) + \eta(t) \dot{p}^\delta(x,t) + w_{im}(x,t) =0, & x\in\Omega_0,~ t\in (0,\infty), \\
p^\delta(x,0)=p_{0}(x), \dot{p}^\delta(x,0)=\dot{p}_{0}(x), & x\in\Omega_0,
\end{array}\right.
\end{equation}
where $w=w_{re}+i w_{im}$ is the solution of the adjoint problem with the same $t$
\begin{equation}\label{prow}
\left\{\begin{array}{ll}
-\triangle w(x,t) +   w(x,t)  = u_{im}(p^\delta(x,t)), & x\in\Omega,~ t\in (0,\infty), \\
\frac{\partial w(x,t)}{\partial \mathbf{n}} + i w(x,t) = 0, & x\in\Gamma,~ t\in (0,\infty),
\end{array}\right.
\end{equation}
and $u=u_{re}+i u_{im}$ is the solution of the BVP
\begin{equation}\label{prou}
\left\{\begin{array}{ll}
-\triangle u(x,t) +   u(x,t)  = p^\delta(x,t) \chi_{\Omega_0}, & x\in\Omega,~ t\in (0,\infty), \\
\frac{\partial u(x,t)}{\partial \mathbf{n}} + i u(x,t) = g^\delta_2(x) + i g^\delta_1(x),
& x\in\Gamma,~ t\in (0,\infty).
\end{array}\right.
\end{equation}
\end{definition}

Before presenting the solvability of system (\ref{DDBNandDDS})-(\ref{prou}), we discuss the well-posedness of the BVPs (\ref{prow}) and (\ref{prou}). For any $u,\psi\in \mathbf{H}^1(\Omega)$, define
\begin{eqnarray*}
&& a(u,\psi)= \int_\Omega \left(\nabla u \cdot \nabla \bar{\psi} +   u \bar{\psi} \right)dx + i \int_\Gamma u \bar{\psi} ds,  \\
&& f^\delta(\psi)= \int_{\Omega_0} p^\delta \bar{\psi} dx + \int_\Gamma g^\delta_2 \bar{\psi} ds + i \int_\Gamma g^\delta_1 \bar{\psi} ds.
\end{eqnarray*}
Then the weak form of the BVP (\ref{prou}) reads:
\begin{eqnarray}
\textrm{~find~} u\in \mathbf{H}^1(\Omega) \textrm{~such that~} a(u,\psi)=f^\delta(\psi), \quad \forall \psi\in \mathbf{H}^1(\Omega).
\label{weakBVP}
\end{eqnarray}

\begin{lemma}\label{Lemma_u}
(\cite{Cheng:2014}) Problem (\ref{weakBVP}) admits a unique solution $u\in \mathbf{H}^1(\Omega)$
which depends continuously on $p^\delta$, $g^\delta_1$ and $g^\delta_2$. Furthermore, a constant $C(\Omega)$ exists such that
\begin{eqnarray}
|\|u\||_{1,\Omega} \leq C(\Omega) \left( \|p^\delta\|_{0,\Omega_0} +  \|g^\delta_1\|_{0,\Gamma} +  \|g^\delta_2\|_{0,\Gamma} \right).
\label{inequality1}
\end{eqnarray}
\end{lemma}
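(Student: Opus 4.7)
The plan is to prove existence and uniqueness via the complex Lax–Milgram theorem applied to the sesquilinear form $a(\cdot,\cdot)$ on the complex Hilbert space $\mathbf{H}^1(\Omega)$, then to derive the a priori estimate (\ref{inequality1}) by a standard energy argument testing the variational equation against $u$ itself. Continuous dependence on the data then follows immediately from linearity and this estimate.

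First I would verify that $a(\cdot,\cdot)$ is a bounded sesquilinear form on $\mathbf{H}^1(\Omega)\times \mathbf{H}^1(\Omega)$. The bulk term is clearly controlled by $|\|u|\||_{1,\Omega}\,|\|\psi|\||_{1,\Omega}$ via Cauchy–Schwarz, and the boundary term $\int_\Gamma u\bar\psi\,ds$ is controlled by the same quantity thanks to the trace theorem $|\|u|\||_{0,\Gamma}\le C(\Omega)\,|\|u|\||_{1,\Omega}$. Similarly, the antilinear functional $f^\delta$ is bounded on $\mathbf{H}^1(\Omega)$ by the Cauchy–Schwarz inequality combined with the trace theorem, yielding $|f^\delta(\psi)|\le C(\Omega)\bigl(\|p^\delta\|_{0,\Omega_0}+\|g_1^\delta\|_{0,\Gamma}+\|g_2^\delta\|_{0,\Gamma}\bigr)\,|\|\psi|\||_{1,\Omega}$.

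Next, for coercivity, the key observation is that
\begin{equation*}
a(u,u)=\int_\Omega\bigl(|\nabla u|^2+|u|^2\bigr)dx + i\int_\Gamma |u|^2\,ds,
\end{equation*}
so $\mathrm{Re}\,a(u,u)=|\|u|\||_{1,\Omega}^2$. This version of coercivity, together with the boundedness of $a$ and $f^\delta$, is enough for the complex Lax–Milgram theorem to provide a unique solution $u\in \mathbf{H}^1(\Omega)$ of (\ref{weakBVP}).

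Finally, setting $\psi=u$ in (\ref{weakBVP}) and taking real parts gives $|\|u|\||_{1,\Omega}^2=\mathrm{Re}\,f^\delta(u)\le |f^\delta(u)|$; combined with the bound on $f^\delta$ derived above and dividing by $|\|u|\||_{1,\Omega}$, this produces (\ref{inequality1}) with a constant depending only on $\Omega$ through the trace constant. I do not foresee any serious obstacle here: the argument is essentially a textbook application of complex Lax–Milgram. The only mildly delicate point is keeping track of the factor $i$ in the boundary term so that coercivity is extracted from the real part and the $L^2(\Gamma)$ control (which is not needed for the estimate on $u$ but is automatically produced by the imaginary part of $a(u,u)$) is not lost.
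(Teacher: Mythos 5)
Your argument is correct and is essentially the standard one used in the cited reference \cite{Cheng:2014} (the paper itself defers the proof to that citation): boundedness of $a$ and $f^\delta$ via Cauchy--Schwarz and the trace theorem, coercivity from $\mathrm{Re}\,a(u,u)=|\|u\||^2_{1,\Omega}$ so that $|a(u,u)|\geq|\|u\||^2_{1,\Omega}$, complex Lax--Milgram for existence and uniqueness, and the energy estimate by testing with $\psi=u$ and taking real parts. No gaps.
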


By Lemma~\ref{Lemma_u} and the definition of $V(p)$ in (\ref{problemFinal}) and $w$ in (\ref{adjointCCBN}),
it is not difficult to prove the following lemma.

\begin{lemma}\label{LemmaV}
The following two inequalities hold for some constants $C(\Omega)$:
\begin{eqnarray}
V(p^\delta) \leq C(\Omega) \left( \|p^\delta\|^2_{0,\Omega_0} +  \|g^\delta_1\|^2_{0,\Gamma} +  \|g^\delta_2\|^2_{0,\Gamma} \right), \label{inequalityVp} \\
|\|w(p^\delta)\||_{1,\Omega} \leq C(\Omega) \left( \|p^\delta\|_{0,\Omega_0} +  \|g^\delta_1\|_{0,\Gamma} +  \|g^\delta_2\|_{0,\Gamma} \right).
\label{inequalityw}
\end{eqnarray}
\end{lemma}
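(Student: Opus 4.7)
The plan is to reduce both inequalities of Lemma~\ref{LemmaV} to the continuous dependence estimate of Lemma~\ref{Lemma_u}, applied once to the direct problem (\ref{CCBNoise}) and once to the adjoint problem (\ref{adjointCCBN}). No new analytic ideas are required; the proof is just a short chain of elementary estimates.

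For (\ref{inequalityVp}) I would start from the definition $V(p^\delta)=\frac{1}{2}\|u_{im}(p^\delta)\|_{0,\Omega}^{2}$. Since the complex $\mathbf{H}^{1}$-norm trivially dominates the $L^{2}$-norm of the imaginary part, $\|u_{im}\|_{0,\Omega}^{2}\leq |\|u\||_{1,\Omega}^{2}$. Squaring (\ref{inequality1}) and using the elementary bound $(X+Y+Z)^{2}\leq 3(X^{2}+Y^{2}+Z^{2})$ absorbs the extra factor into a redefined constant $C(\Omega)$ and delivers (\ref{inequalityVp}) at once.

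For (\ref{inequalityw}) I would first establish an analogue of Lemma~\ref{Lemma_u} for the adjoint problem (\ref{adjointCCBN}). Its weak formulation uses exactly the same sesquilinear form $a(\cdot,\cdot)$ as in (\ref{weakBVP}), together with the linear functional $\psi\mapsto \int_{\Omega} u_{im}(p^\delta)\bar\psi\,dx$ and vanishing boundary contributions. The coercivity identity $\mathrm{Re}\,a(w,w)=|\|w\||_{1,\Omega}^{2}$, combined with Cauchy--Schwarz on this right-hand side, yields $|\|w\||_{1,\Omega}\leq C(\Omega)\|u_{im}(p^\delta)\|_{0,\Omega}$. Chaining this with $\|u_{im}\|_{0,\Omega}\leq |\|u\||_{1,\Omega}$ and invoking (\ref{inequality1}) once more produces (\ref{inequalityw}).

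The only real obstacle is bookkeeping rather than analysis: Lemma~\ref{Lemma_u} is stated for the direct BVP with source supported in $\Omega_{0}$ and nonzero Robin data, and so does not literally cover (\ref{adjointCCBN}). Nevertheless, the argument in \cite{Cheng:2014} is a routine application of the complex Lax--Milgram lemma, relying only on coercivity of $a$ on $\mathbf{H}^{1}(\Omega)$ and boundedness of the source functional; both properties remain valid verbatim for a source $u_{im}\in L^{2}(\Omega)$ together with homogeneous Robin data. I would therefore record this analogue as a short preliminary remark before chaining the estimates above.
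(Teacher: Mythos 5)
Your proposal is correct and follows essentially the same route as the paper, which simply asserts that the lemma follows from Lemma~\ref{Lemma_u} together with the definitions of $V$ and $w$; your write-up merely makes explicit the two applications of the Lax--Milgram/coercivity estimate (to (\ref{prou}) and to the adjoint problem (\ref{adjointCCBN}) with source $u_{im}(p^\delta)$ and homogeneous Robin data) that the authors leave implicit. The chain $|\|w\||_{1,\Omega}\leq \|u_{im}(p^\delta)\|_{0,\Omega}\leq |\|u\||_{1,\Omega}$ followed by (\ref{inequality1}) is exactly the intended argument.
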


\begin{theorem}\label{ThmDynamic}
For each pair $(p_{0}, \dot{p}_{0})\in P\times P$, system (\ref{DDBNandDDS})-(\ref{prou}) has a unique weak solution which depends continuously on the boundary data $\{g^\delta_1, g^\delta_2\}$.
\end{theorem}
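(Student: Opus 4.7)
The plan is to reformulate the coupled system as a non\-autonomous linear ODE in the Hilbert space $X:=P\times P$ and then invoke Cauchy--Lipschitz theory for linear equations, using the two elliptic lemmas to control the forcing terms.

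First I would encode the BVPs as linear operators. By Lemma~\ref{Lemma_u} applied to (\ref{prou}), the map $p\mapsto u(p;g_1^\delta,g_2^\delta)$ is affine and bounded from $P$ into $\mathbf{H}^1(\Omega)$; similarly (\ref{prow}) defines a bounded linear map $u_{im}\mapsto w$. Composing and taking imaginary parts, one obtains a bounded linear operator $A:P\to P$ together with an offset $b\in P$ such that
\begin{equation*}
w_{im}(p^\delta)\chi_{\Omega_0}=Ap^\delta+b,\qquad \|b\|_P\le C(\Omega)\bigl(\|g_1^\delta\|_{0,\Gamma}+\|g_2^\delta\|_{0,\Gamma}\bigr),
\end{equation*}
where the bound on $b$ uses (\ref{inequalityw}) applied with $p^\delta\equiv0$. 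Convexity of $V$ (noted just before the definition) together with the identification $\nabla_pV(p)=w_{im}(p)\chi_{\Omega_0}$ further implies that $A$ is self-adjoint and positive semi-definite on $P$.

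Next I would set $q=\dot p^\delta$ and rewrite (\ref{DDBNandDDS}) as the first-order system
\begin{equation*}
\frac{d}{dt}\begin{pmatrix}p^\delta\\ q\end{pmatrix}
=\mathcal{A}(t)\begin{pmatrix}p^\delta\\ q\end{pmatrix}+F,\qquad
\mathcal{A}(t)=\begin{pmatrix}0 & I\\ -A & -\eta(t)I\end{pmatrix},\quad
F=\begin{pmatrix}0\\ -b\end{pmatrix},
\end{equation*}
with initial datum $(p_0,\dot p_0)\in X$. Because $A$ is bounded on $P$, the operator $\mathcal{A}(t)$ is bounded on $X$ for each $t$, with norm controlled by $\|A\|+|\eta(t)|+1$. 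Assuming $\eta\in L^1_{\mathrm{loc}}([0,\infty))$ (which covers the fixed damping case and, on any interval $[\tau,\infty)$ with $\tau>0$, the dynamic choice $\eta(t)=\alpha/t$), the Cauchy--Lipschitz theorem for linear evolution equations in a Banach space yields a unique mild solution $(p^\delta,\dot p^\delta)\in C([0,\infty);X)$, and linearity rules out blow-up so the solution is global. The component $p^\delta(t)$ then produces via (\ref{prou})--(\ref{prow}) well-defined functions $u(\cdot,t),w(\cdot,t)\in\mathbf{H}^1(\Omega)$, closing the loop.

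Finally, continuous dependence on $\{g_1^\delta,g_2^\delta\}$ would be obtained by a Gr\"onwall argument: the difference of two solutions corresponding to data $\{g_1^\delta,g_2^\delta\}$ and $\{\tilde g_1^\delta,\tilde g_2^\delta\}$ with the same initial data solves the same linear system with zero initial condition and inhomogeneity $F-\tilde F$, whose $X$-norm is controlled by the $L^2(\Gamma)$ difference of the boundary data through Lemma~\ref{LemmaV}. Testing against $(Ap^\delta,q)$ and using self-adjointness of $A$ would give an energy identity whose Gr\"onwall closure bounds $\|p^\delta-\tilde p^\delta\|_P^2+\|\dot p^\delta-\dot{\tilde p}^\delta\|_P^2$ uniformly on compact time intervals by the data difference. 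The main obstacle I anticipate is technical rather than conceptual: making the argument work uniformly in the dynamic damping regime, where $\eta(t)=\alpha/t$ is singular at $t=0$, so that one must either restrict to $t\in[\tau,\infty)$ and pass to the limit $\tau\to0^+$, or treat the degeneracy through the integrating factor $\exp(\int_0^t\eta(s)\,ds)$ and show that the initial condition is attained in $X$.
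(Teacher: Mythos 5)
Your proposal follows essentially the same route as the paper's Appendix A: reduce to a first-order system in $P\times P$, use the elliptic estimates to show the forcing term is an affine bounded map of $p^\delta$ (the paper writes $w_{im}=\mathcal{M}p^\delta+\mathcal{N}g^\delta$, your $Ap^\delta+b$), invoke the Cauchy--Lipschitz theorem for global existence and uniqueness, and obtain continuous dependence by observing that the difference of two solutions satisfies the same linear system with zero initial data and forcing controlled by the boundary-data difference. The singularity of $\eta(t)=r/t$ at $t=0$ that you flag as the main obstacle is a non-issue in the paper's setting, since the dynamic-damping flow is initialized at $t=1$ (see the remark following equation (\ref{DDBNandDDSCase2})).
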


The proof is similar to those of (a) in ~\cite[Theorem 1]{ZhangYe2018}. A sketch of the proof is given in the Appendix A.


\section{Convergence for noise-free boundary data} In this section, we investigate two models: when the damping parameter $\eta$ is fixed, and when it is time dependent. For simplicity, sometimes let $p(t)=p(\cdot,t)$.

\subsection{Case I: $\eta$ is a constant}

We first study the dynamics of the solution $p(t)\in P$ of system (\ref{DDBNandDDS})-(\ref{prou}).

\begin{lemma}\label{LemmaVanishingExact}
Let $p(x,t)$ be the solution of (\ref{DDBNandDDS})-(\ref{prou}) with the exact data $\{g_1, g_2\}$. Then, in the case $\eta\geq1$, we have
\begin{itemize}
\item[(i)] $p\in L^\infty([0,\infty), P)$.
\item[(ii)] $\dot{p}\in L^\infty([0,\infty),P) \cap L^2([0,\infty),P)$ and $\dot{p}(\cdot,t)\to0$ as $t\to\infty$.
\item[(iii)] $\ddot{p}\in L^\infty([0,\infty),P) \cap L^2([0,\infty),P)$ and $\ddot{p}(\cdot,t)\to0$ as $t\to\infty$.
\item[(iv)] $V(p(\cdot,t)) = o(t^{-1})$ as $t\to\infty$.
\end{itemize}
\end{lemma}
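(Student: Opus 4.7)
The plan is to combine two complementary Lyapunov functions with the convexity of $V$ (recall $V''(p)q^2=\|u_{im}(q)-u_{im}(0)\|^2_{0,\Omega}$) and the \emph{a priori} bounds from Lemmas~\ref{Lemma_u}--\ref{LemmaV}. Throughout I abbreviate $p(t)=p(\cdot,t)$ and use that $V(p^\dagger)=0$ together with $\nabla V(p^\dagger)=0$.

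For (i) and the $L^\infty\cap L^2$ parts of (ii), I would introduce the Lyapunov function with cross term
\[
\mathcal{E}(t) = V(p(t)) + \frac{1}{2}\|\dot p(t)+\lambda(p(t)-p^\dagger)\|_P^{2} + \frac{\lambda(\eta-\lambda)}{2}\|p(t)-p^\dagger\|_P^{2},
\]
for a constant $\lambda\in(0,\eta)$. Direct differentiation using $\ddot p=-\eta\dot p-\nabla V(p)$ together with the convexity inequality $(\nabla V(p),p-p^\dagger)_P\geq V(p)$ should produce, after the cross terms cancel,
\[
\dot{\mathcal{E}}(t) \leq -(\eta-\lambda)\,\|\dot p(t)\|_P^{2} - \lambda\, V(p(t)) \leq 0.
\]
Non-negativity of $\mathcal{E}$ then simultaneously bounds $\|p(t)-p^\dagger\|_P$ (claim (i)) and $\|\dot p(t)\|_P$ (the $L^\infty$ half of (ii)); integrating on $[0,\infty)$ delivers both $\int_0^\infty\|\dot p(t)\|_P^{\,2}\,dt<\infty$ and, decisively, $\int_0^\infty V(p(t))\,dt<\infty$.

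For the remaining limit $\dot p(t)\to 0$, rewrite the ODE as $\ddot p=-\eta\dot p-w_{im}(p)\chi_{\Omega_0}$; by (i) and Lemma~\ref{LemmaV}, $w_{im}(p(t))$ is uniformly bounded in $P$, so $\ddot p\in L^\infty([0,\infty),P)$. Then $t\mapsto\|\dot p(t)\|_P^{\,2}$ is Lipschitz (its derivative $2(\ddot p,\dot p)_P$ is bounded) and integrable, which forces $\|\dot p(t)\|_P^{\,2}\to 0$. The $L^\infty$ half of (iii) is now in hand; for the $L^2$ half, the intermediate estimate $|\|w(p)\||_{1,\Omega}\leq C(\Omega)\,\|u_{im}(p)\|_{0,\Omega}$ used inside the proof of Lemma~\ref{LemmaV} gives the structural bound
\[
\|\nabla V(p)\|_P^{\,2} \leq |\|w(p)\||_{1,\Omega}^{\,2} \leq 2C(\Omega)^2\,V(p),
\]
and the already proved integrability of $V\circ p$ yields $\nabla V(p)\in L^2$, whence $\ddot p\in L^2$. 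To upgrade $\ddot p\in L^2\cap L^\infty$ to $\ddot p(t)\to 0$, differentiate the ODE once more: $p^{(3)}=-\eta\ddot p-V''(p)\dot p$. Because $u_{im}$ depends affinely on $p$, $V$ is a convex quadratic and $V''$ acts as a bounded constant operator, so $p^{(3)}\in L^\infty$ and the same Lipschitz-plus-$L^1$ argument applies to $\|\ddot p\|_P^{\,2}$.

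Finally, for (iv), I would introduce the simpler energy
\[
\tilde{\mathcal{E}}(t) = V(p(t)) + \frac{1}{2}\|\dot p(t)\|_P^{\,2},
\]
whose derivative collapses to $\dot{\tilde{\mathcal{E}}}(t)=-\eta\,\|\dot p(t)\|_P^{\,2}\leq 0$; thus $\tilde{\mathcal{E}}$ is non-increasing, and the two $L^1$-bounds above give $\tilde{\mathcal{E}}\in L^1([0,\infty))$. The elementary fact that any non-negative, non-increasing, integrable function $f$ on $[0,\infty)$ satisfies $tf(t)\to 0$ --- indeed $\frac{t}{2}f(t)\leq\int_{t/2}^{t}f(s)\,ds\to 0$ --- then yields $\tilde{\mathcal{E}}(t)=o(t^{-1})$, and in particular $V(p(t))=o(t^{-1})$. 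The main obstacle is choosing $\lambda$ so that the mixed-term Lyapunov function $\mathcal{E}$ is both non-negative and yields the clean sign-definite dissipation above; this is also where the hypothesis $\eta\geq 1$ most likely enters, to leave enough room for a feasible $\lambda\in(0,\eta)$ once the explicit constants of Lemma~\ref{LemmaV} are absorbed.
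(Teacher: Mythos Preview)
Your argument is correct and takes a genuinely different route from the paper. The paper never introduces your mixed Lyapunov function
\[
\mathcal{E}(t)=V(p(t))+\tfrac12\|\dot p(t)+\lambda(p(t)-p^\dagger)\|_P^2+\tfrac{\lambda(\eta-\lambda)}{2}\|p(t)-p^\dagger\|_P^2
\]
for constant $\eta$; instead it works with the anchor $e(t)=\tfrac12\|p(t)-p^\dagger\|_P^2$, combines the convexity inequality with the ODE to derive
\[
\ddot e+\eta\dot e+\eta\,\tfrac{d}{dt}\|\dot p\|_P^2+(\eta^2-1)\|\dot p\|_P^2+\|\ddot p\|_P^2\le 0,
\]
and it is precisely the sign of the coefficient $(\eta^2-1)$ that forces the hypothesis $\eta\ge 1$. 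From this the paper extracts boundedness of $p$, then uses the simple energy $\tilde{\mathcal E}$ together with an auxiliary function $h(t)=\tfrac{\eta}{2}\|p-p^\dagger\|_P^2+(\dot p,p-p^\dagger)_P$ to prove $\int_0^\infty\tilde{\mathcal E}\,dt<\infty$, and integrates the displayed inequality once more to obtain $\ddot p\in L^2$. Your single anchored functional delivers the boundedness of $p$, the $L^\infty\cap L^2$ control of $\dot p$, and the integrability of $V(p(\cdot))$ in one stroke; and your structural observation $\|\nabla V(p)\|_P^2\le 2C(\Omega)^2 V(p)$ (immediate from applying Lemma~\ref{Lemma_u} to the adjoint problem~(\ref{adjointCCBN})) gives $\ddot p\in L^2$ without the paper's differential-inequality manipulation. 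Interestingly, your anchored Lyapunov function is exactly the device the paper \emph{does} use in \S3.2 (see~(\ref{anchoredEnergy})) for the vanishing damping $\eta(t)=r/t$, but not here.

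One clarification: your closing worry about where $\eta\ge 1$ enters is misplaced. Your computation
\[
\dot{\mathcal E}(t)\le -(\eta-\lambda)\|\dot p(t)\|_P^2-\lambda\,V(p(t))
\]
is valid for \emph{any} $\lambda\in(0,\eta)$ and any $\eta>0$; no hidden constant from Lemma~\ref{LemmaV} needs to be absorbed, because the convexity inequality $(\nabla V(p),p-p^\dagger)_P\ge V(p)$ is exact for the quadratic $V$. So your proof in fact establishes the lemma under the weaker assumption $\eta>0$, whereas the paper's argument genuinely requires $\eta\ge 1$ at the step where $(\eta^2-1)\ge 0$ is used.
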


\begin{proof}
The proof follows the idea in \cite{Attouch-2000}.  Consider for every $t\in[0,\infty)$ the function $e(t)=e(t;p^\dagger)=\frac{1}{2} \|p(t) - p^\dagger\|^2_P$, where $p^\dagger$ is defined in (\ref{exactp}). Since $\dot{e}(t)= ( p(t) - p^\dagger, \dot{p}(t) )_P $ and $\ddot{e}(t)= \|\dot{p}(t)\|^2_P+ ( p(t) - p^\dagger, \ddot{p}(t) )_P$ for every $t\in[0,\infty)$, taking into account (\ref{SecondFlow}), we get
\begin{equation}
\label{EqError2}
\ddot{e}(t) + \eta \dot{e}(t) + ( p(t) - p^\dagger, u_{im}(p(t)) )_P = \|\dot{p}(t)\|^2_P.
\end{equation}
Here, and later on, we denote $(p, u)_P=\int_{\Omega_0} p u dx$ for $P=L^2(\Omega_0)$ and $(p, u)_P=\int_{\Omega_0} p u dx + \int_{\Omega_0} \frac{\partial p}{\partial x} \frac{\partial u}{\partial x} dx$ for $P=H^1(\Omega_0)$. Moreover, $\|u\|_P=\sqrt{(u, u)_P}$.

On the other hand, by the convexity inequality of the functional $\|u_{im}( \cdot) \|^2_P$, we derive
\begin{eqnarray}\label{BoundedSolutionProof1}
\| u_{im}(p(t)) \|^2_P= \| u_{im}(p(t)) \|^2_P- \| u_{im}(p^\dagger) \|^2_P\leq ( p(t) - p^\dagger, u_{im}(p(t)) )_P .
\end{eqnarray}

Combine (\ref{EqError2}) and the above inequality to obtain
\begin{eqnarray}
\label{ProofIneqe}
\ddot{e}(t) + \eta \dot{e}(t) + \| u_{im}(p(t)) \|^2_P\leq \|\dot{p}(t)\|^2_P
\end{eqnarray}
or, equivalently (by using the equation (\ref{DDBNandDDS})),
\begin{eqnarray}
\label{ProofIneq0}
\ddot{e}(t) + \eta \dot{e}(t) + \eta \frac{d \|\dot{p}(t)\|^2_P}{dt} + \left( \eta^2 - 1 \right) \|\dot{p}(t)\|^2_P+  \|\ddot{p}\|^2_P\leq 0.
\end{eqnarray}

By the assumption $\eta\geq 1$, we deduce that
\begin{equation}
\label{BoundedSolutionProof2}
\ddot{e}(t) + \eta \dot{e}(t) + \eta \frac{d \|\dot{p}(t)\|^2_P}{dt} \leq 0,
\end{equation}
which means that the function $t \mapsto \dot{e}(t) + \eta e(t)+ \eta \|\dot{p}(t)\|^2_P$ is monotonically decreasing. Hence a real number $C$ exists such that
\begin{equation}
\label{ProofIneq5a}
\dot{e}(t) + \eta e(t)+ \eta  \|\dot{p}(t)\|^2_P\leq C,
\end{equation}
which implies $\dot{e}(s) + \eta e(s) \leq C$. By multiplying this inequality with $e^{\eta s}$ and then integrating from 0 to $t$, we obtain the inequality
\begin{eqnarray*}
e(t)\leq e(0) e^{-\eta t} + C \left( 1 - e^{-\eta t} \right)/\eta \leq e(0) + C/\eta .
\end{eqnarray*}
Hence, $e(\cdot)$ is uniform bounded, and, consequently, $p(\cdot)\in L^\infty([0,\infty), P)$.

Now, consider the long-term behavior of $\dot{p}$. Define the Lyapunov function of the differential equation (\ref{DDBNandDDS}) by $\mathcal{E}(t) = V(p(t))+ \frac{1}{2} \|\dot{p}(t)\|^2_P$. It is not difficult to show that
\begin{equation}
\label{Decreasing1}
\dot{\mathcal{E}}(t) = - \eta \|\dot{p}(t)\|^2_P
\end{equation}
by looking at the equation (\ref{DDBNandDDS}) and the differentiation of the energy function
$\dot{\mathcal{E}}(t) = ( \dot{p}(t), \ddot{p}(t) - u_{im}(p(t)) )_P$. Hence, $\mathcal{E}(t)$ is non-increasing, and consequently, $\|\dot{p}(t)\|^2_P \leq 2\mathcal{E}(0)$. Therefore, $\dot{p}(\cdot)\in L^\infty([0,\infty), P)$.
Integrating both sides in (\ref{Decreasing1}), we obtain
\begin{eqnarray*}
\int^{\infty}_{0} \|\dot{p}(t)\|^2_Pdt \leq  \mathcal{E}(0) / \eta <  \infty,
\end{eqnarray*}
which yields $\dot{p}(\cdot) \in L^2([0,\infty),P)$  (and $\lim_{t\to\infty} \dot{p}(t) = 0$ since $\dot{p}(\cdot) \in L^\infty([0,\infty), P) \cap L^2([0,\infty),P)$).

Define
\begin{eqnarray}\label{h}
h(t) = \frac{\eta}{2} \| p(t) - p^\dagger \|^2_P+ ( \dot{p}(t), p(t) - p^\dagger )_P.
\end{eqnarray}
By elementary calculations, we derive that
\begin{eqnarray*}
\begin{array}{ll}
\dot{h}(t) = \eta ( \dot{p}(t), p(t) - p^\dagger )_P + ( \ddot{p}(t), p(t) - p^\dagger )_P + \| \dot{p}(t) \|^2_P\\ \qquad = \| \dot{p}(t) \|^2_P- ( u_{im}(p(t)), p(t) - p^\dagger )_P,
\end{array}
\end{eqnarray*}
which implies that (by noting $\dot{\mathcal{E}}(t) = - \eta \|\dot{p}(t) \|^2_P$ and the inequality (\ref{BoundedSolutionProof1}))
\begin{eqnarray*}
3\dot{\mathcal{E}}(t) + 2\eta \mathcal{E}(t)  + \eta \dot{h}(t)
= \eta \left [ 2 V(p(t)) -  (p - p^\dagger, u_{im}(p(t)))_P \right] \leq 0.
\end{eqnarray*}

Integrate the above inequality on $[0,T]$ to obtain together with the non-negativity of $\mathcal{E}(t)$
\begin{eqnarray}\label{chiIneq}
\quad \int^T_0 \mathcal{E}(t) dt \leq \frac{3}{2\eta} \left( \mathcal{E}(0) - \mathcal{E}(t) \right) - \frac{1}{2} (h(t)-h(0))
\leq \left(  \frac{3}{2\eta} \mathcal{E}(0) + \frac{1}{2} h(0) \right)- \frac{1}{2} h(t).
\end{eqnarray}

On the other hand, since both $p(t)$ and $\dot{p}(t)$ are uniform bounded, a constant $M$ exists such that $|h(t)|\leq M$. Hence, letting $T\to \infty$ in (\ref{chiIneq}), we obtain
\begin{eqnarray}\label{chiIneq2}
\int^\infty_0 \mathcal{E}(t)  dt < \infty.
\end{eqnarray}
Hence $\lim_{t\to\infty} \mathcal{E}(t) = 0$, and, consequently, $\lim_{t\to\infty} \dot{p}(t) = 0$.

Since $\mathcal{E}(t)$ is non-increasing, we deduce that
\begin{eqnarray}\label{chiIneq3}
\int^{T}_{T/2} \mathcal{E}(t) dt \geq \frac{T}{2}\mathcal{E}(T).
\end{eqnarray}
Using (\ref{chiIneq2}), the left side of (\ref{chiIneq3}) tends to 0 when $T\to\infty$, which implies that $\lim_{T\to\infty} T\mathcal{E}(T)=0$. Hence, we conclude $\lim_{T\to\infty} T V(p(T)) =0$, which yields the desired result in (iv).

Finally, let us show the long-term behavior of $\ddot{p}(t)$. Integrating the inequality (\ref{ProofIneq0}) from 0 to $T$ we obtain that there exists a real number $C'$ such that for every $t\in[0,\infty)$
\begin{eqnarray}
\label{ProofIneqEddot}
\begin{array}{l}
\qquad \dot{e}(T) + \eta e(T) + \eta \|\dot{p}(T)\|^2_P+ \left( \eta^2 - 1 \right) \int^T_0 \|\dot{p}(t)\|^2_Pdt +  \eta \int^T_0 \|\ddot{p}(t)\|^2_PdT \leq C'.
\end{array}
\end{eqnarray}
Since both $e(\cdot)$ and $\dot{e}(\cdot)$ are global bounded (note that $p(t),\dot{p}(t)\in L^\infty([0,\infty),P)$), inequality (\ref{ProofIneqEddot}) gives $\ddot{p}(t)\in L^2([0,\infty),P)$. The relations $\ddot{p}(t)\in L^\infty([0,\infty),P)$ and $\ddot{p}(t)\to0$ as $t\to\infty$ are obvious by noting assertions (i), (ii), (iv) and the connection equation (\ref{SecondFlow}).
\end{proof}

\begin{remark}
The rate $V(p(\cdot,t)) = o(t^{-1})$ as $t \to \infty$ given in Lemma \ref{LemmaVanishingExact} for the second order evolution equation (\ref{SecondFlow}) should be compared with the corresponding result for the first order method, i.e. the gradient decent methods, where one only obtains  $V(p(\cdot,t)) = \mathcal{O}(t^{-1})$ as $t \to \infty$. If we consider a discrete iterative method with the number $k$ of iterations, assertion (iv) in Lemma \ref{LemmaVanishingExact} indicates that in comparison with gradient descent methods, the second order methods (\ref{SecondFlow}) need the same computational complexity for the number $k$ of iterations, but can achieve a higher order $o(k^{-1})$ of accuracy of the objective functional.
\end{remark}

\smallskip

Now, we list the following two lemmas, which will be used in the convergence analysis of the dynamical solution $p(x,t)$.

\smallskip

\begin{lemma}\label{Opial}
(Opial lemma~\cite{Opial}) Let $P$ be a Hilbert space and $p:[0,\infty)\to P$ be a mapping such that
there exists a non-empty set $S\subset P$ which satisfies
\begin{itemize}
\item[(i)] $\forall t_n \to \infty$ with $p(t_n)\rightharpoonup \bar{p}$ weakly in $P$, we have $\bar{p}\in S$.
\item[(ii)] $\forall p^\dagger \in S$, $\lim_{t\to \infty} \|p(t) - p^\dagger\|_P$ exists.
\end{itemize}
Then, $p(t)$ weakly converges as $t\to\infty$ to some element of $S$.
\end{lemma}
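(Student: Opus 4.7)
The plan is to follow the classical three-step argument that underlies Opial's lemma: bound $p(\cdot)$, show any two weak cluster points at infinity coincide, and then promote uniqueness of cluster points to weak convergence. First, I would note that condition (ii) immediately gives boundedness of $p(\cdot)$: fix any $p^\dagger\in S$ (possible since $S\neq\emptyset$); then $t\mapsto\|p(t)-p^\dagger\|_P$ converges as $t\to\infty$, hence is bounded, so $p(\cdot)$ is bounded in $P$. Because $P$ is a Hilbert space (hence reflexive), the Banach--Alaoglu theorem ensures that every sequence $\{p(t_n)\}$ with $t_n\to\infty$ admits a weakly convergent subsequence.

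Second, I would prove the uniqueness of the weak cluster point. Let $t_n,s_n\to\infty$ with $p(t_n)\rightharpoonup\bar{p}_1$ and $p(s_n)\rightharpoonup\bar{p}_2$; by (i) both $\bar{p}_1,\bar{p}_2\in S$. The key algebraic identity is
\begin{equation*}
\|p(t)-\bar{p}_1\|_P^2-\|p(t)-\bar{p}_2\|_P^2 = 2\,(p(t),\bar{p}_2-\bar{p}_1)_P+\|\bar{p}_1\|_P^2-\|\bar{p}_2\|_P^2.
\end{equation*}
By (ii) applied to $\bar{p}_1$ and $\bar{p}_2$, the left-hand side converges as $t\to\infty$, and consequently so does the inner product $(p(t),\bar{p}_2-\bar{p}_1)_P$. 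Evaluating this common limit along $t_n$ and along $s_n$, and using the weak convergences, yields $(\bar{p}_1,\bar{p}_2-\bar{p}_1)_P=(\bar{p}_2,\bar{p}_2-\bar{p}_1)_P$, i.e.\ $\|\bar{p}_2-\bar{p}_1\|_P^2=0$, so $\bar{p}_1=\bar{p}_2$.

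Third, I would upgrade the uniqueness of weak cluster points to weak convergence of the whole trajectory by a standard subsequence-of-subsequence argument. If $p(t)$ did not converge weakly to this common cluster point $\bar{p}$, there would exist $\varphi\in P$, $\varepsilon>0$ and $t_n\to\infty$ with $|(p(t_n)-\bar{p},\varphi)_P|\geq\varepsilon$. Boundedness of $\{p(t_n)\}$ provides a further subsequence converging weakly; by (i) its limit lies in $S$, and by the uniqueness just established it must equal $\bar{p}$, contradicting the choice of $t_n$. Hence $p(t)\rightharpoonup\bar{p}\in S$ as $t\to\infty$.

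There is no genuine obstacle here, since each hypothesis is used in a direct way; the only nontrivial ingredient is the identity in the second paragraph, which is precisely the device that converts the existence of the two scalar limits in (ii) into the convergence of the linear functional $(p(t),\bar{p}_2-\bar{p}_1)_P$ and thereby forces $\bar{p}_1=\bar{p}_2$.
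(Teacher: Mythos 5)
The paper does not prove this lemma at all: it is quoted as a known result with a citation to Opial's original work, so there is no in-paper argument to compare against. Your proof is the standard one --- the polarization-type identity converting the two scalar limits from (ii) into convergence of $(p(t),\bar{p}_2-\bar{p}_1)_P$, hence uniqueness of weak cluster points, followed by weak sequential compactness of bounded sets in a Hilbert space and a subsequence-of-subsequences argument --- and it is correct. The only cosmetic imprecision is that (ii) gives boundedness of $\|p(t)-p^\dagger\|_P$ only on a tail $[T,\infty)$ (no continuity of $p$ is assumed), but that is all the argument needs, since only sequences $t_n\to\infty$ are relevant.
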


\begin{lemma}\label{LemmaVarphi}
(Lemma 4.2 in~\cite{Attouch-2000}) Let $\varphi(t)\in C^1((0,\infty),[0, +\infty))$ satisfy the
inequality $\ddot{\varphi}(t) + \eta \dot{\varphi}(t) \leq  g(t)$ with $g(t)\in L^1((0,\infty), [0, +\infty))$. Then, $\dot{\varphi}_+$, the positive part of $\dot{\varphi}$, belongs to $L^1((0,\infty), [0, +\infty))$ and, as a consequence, $\lim_{t\to\infty} \varphi(t)$ exists.
\end{lemma}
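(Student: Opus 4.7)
The plan is to reduce the second-order differential inequality to a first-order one for $\dot{\varphi}$ and then apply an integrating factor, which will make the $L^1$ bound on $\dot{\varphi}_+$ almost mechanical, and deduce the existence of $\lim_{t\to\infty}\varphi(t)$ by a monotone decomposition argument. Throughout I assume, as is standard for this kind of result, that $\dot{\varphi}$ is (at least) absolutely continuous so that the inequality $\ddot{\varphi}(t)+\eta\dot{\varphi}(t)\le g(t)$ may be integrated; the statement in the paper enforces $\varphi\in C^1$ and the applications in Lemma \ref{LemmaVanishingExact} produce enough regularity for this to be legitimate.

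First, I would set $\phi(t)=\dot{\varphi}(t)$ and rewrite the hypothesis as the linear differential inequality $\dot{\phi}(t)+\eta\phi(t)\le g(t)$. Multiplying by the integrating factor $e^{\eta t}$ and integrating from $0$ to $t$ yields the pointwise estimate
\begin{equation*}
\dot{\varphi}(t)\;\le\;e^{-\eta t}\,\dot{\varphi}(0)\;+\;\int_0^t e^{-\eta(t-s)}\,g(s)\,ds.
\end{equation*}
Since the right-hand side is an upper bound on $\dot{\varphi}(t)$ and the convolution term is non-negative (as $g\ge 0$), taking the positive part gives $\dot{\varphi}_+(t)\le [\dot{\varphi}(0)]_+\,e^{-\eta t}+\int_0^t e^{-\eta(t-s)}g(s)\,ds$. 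Integrating over $t\in(0,\infty)$ and swapping the order of integration by Fubini--Tonelli on the double integral produces
\begin{equation*}
\int_0^\infty\dot{\varphi}_+(t)\,dt\;\le\;\frac{[\dot{\varphi}(0)]_+}{\eta}\;+\;\frac{1}{\eta}\int_0^\infty g(s)\,ds\;<\;\infty,
\end{equation*}
which is the first claim $\dot{\varphi}_+\in L^1(0,\infty)$.

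For the second claim, I would write the fundamental-theorem decomposition $\varphi(t)=\varphi(0)+\int_0^t\dot{\varphi}_+(s)\,ds-\int_0^t\dot{\varphi}_-(s)\,ds$. The first integral has a finite limit as $t\to\infty$ by the $L^1$ bound just proved. The second integral is monotonically non-decreasing, and the hypothesis $\varphi\ge 0$ gives the a priori bound $\int_0^t\dot{\varphi}_-(s)\,ds\le\varphi(0)+\int_0^t\dot{\varphi}_+(s)\,ds\le\varphi(0)+\|\dot{\varphi}_+\|_{L^1}$, so it too converges to a finite limit. Consequently $\lim_{t\to\infty}\varphi(t)$ exists and is finite.

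The only real obstacle I foresee is the regularity question: the inequality $\ddot{\varphi}+\eta\dot{\varphi}\le g$ needs to be interpreted in a sense that permits the integrating-factor step, and one has to verify that $\dot{\varphi}_+$ (as the positive part of a continuous function) is measurable and that the Fubini swap is justified; both follow from $g\in L^1$ and continuity of $\dot{\varphi}$. Everything else, including the eventual limit argument, reduces to monotone convergence of two non-decreasing, bounded scalar integrals, so no further obstruction is expected.
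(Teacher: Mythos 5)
Your proof is correct, and it is essentially the standard argument behind the cited Lemma 4.2 of Attouch et al. (the paper itself gives no proof, only the reference): an integrating factor $e^{\eta t}$ to bound $\dot{\varphi}_+$ pointwise by a decaying initial term plus a convolution with $g$, Fubini to get the $L^1$ bound, and then the decomposition of $\varphi$ into the difference of two bounded non-decreasing integrals (using $\varphi\geq 0$) to extract the limit. No gaps beyond the minor regularity caveat you already flag, which is implicit in the lemma's statement.
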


\smallskip

Now, we are in the position to present the main result in this section.

\begin{theorem}\label{ConvergenceExact}
The solution $p(x,t)$ of (\ref{DDBNandDDS})-(\ref{prou}) with the exact data converges weakly in $P$ to an exact source function of inverse source problem (\ref{BVP}) as $t\to\infty$.
\end{theorem}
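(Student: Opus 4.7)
The plan is to apply the Opial lemma (Lemma \ref{Opial}) with the set $S=\arg\min_{p\in P} V(p;0)$, which by (\ref{exactp}) coincides with the set of exact source functions of (\ref{BVP}) and is non-empty. Two things must be verified: every weak subsequential limit of $p(t)$ lies in $S$, and the distance $\|p(t)-p^\dagger\|_P$ has a limit for each $p^\dagger\in S$.

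For the first Opial condition, I would take any sequence $t_n\to\infty$ with $p(t_n)\rightharpoonup\bar p$ in $P$. Since $V$ is convex (as noted after Lemma \ref{LemmaV} using $V''(p)q^2=\|u_{im}(q)-u_{im}(0)\|_{0,\Omega}^2\ge 0$) and continuous on $P$ by Lemma \ref{Lemma_u}, it is weakly lower semi-continuous, so $V(\bar p)\le\liminf_n V(p(t_n))$. By Lemma \ref{LemmaVanishingExact}(iv) the right-hand side is $0$, and since $V\ge 0$ this forces $V(\bar p)=0=\min V$, i.e., $\bar p\in S$.

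For the second Opial condition, fix $p^\dagger\in S$ and set $\varphi(t)=e(t;p^\dagger)=\tfrac12\|p(t)-p^\dagger\|_P^2$. Inequality (\ref{ProofIneqe}) already obtained in the proof of Lemma \ref{LemmaVanishingExact} reads
\begin{equation*}
\ddot{\varphi}(t)+\eta\dot{\varphi}(t)\le \|\dot p(t)\|_P^2,
\end{equation*}
and by Lemma \ref{LemmaVanishingExact}(ii) we have $\|\dot p(\cdot)\|_P^2\in L^1([0,\infty))$. Lemma \ref{LemmaVarphi} then guarantees that $\lim_{t\to\infty}\varphi(t)$ exists, and hence so does $\lim_{t\to\infty}\|p(t)-p^\dagger\|_P$. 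Combining the two conditions, Opial's lemma yields a weak limit $p(t)\rightharpoonup p^*\in S$, which is the desired conclusion.

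The main obstacle I anticipate is the first Opial condition, specifically the passage from weak convergence $p(t_n)\rightharpoonup\bar p$ to the inequality $V(\bar p)\le\liminf V(p(t_n))$. This is where convexity and continuity of $V$ are essential; without them the chain from ``$V(p(t_n))\to 0$'' to ``$\bar p$ minimizes $V$'' would fail. The remaining ingredients (non-negativity of $V$, boundedness of $p$, and $\dot p\in L^2$) are already in hand from Lemma \ref{LemmaVanishingExact}, so beyond justifying the weak lower semi-continuity step the rest is an essentially mechanical assembly of results already proved.
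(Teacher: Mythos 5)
Your proposal is correct and follows essentially the same route as the paper: verify the two hypotheses of Opial's lemma, using convexity of $V$ together with $V(p(t_n))\to 0$ for the first, and inequality (\ref{ProofIneqe}) with $\dot p\in L^2([0,\infty),P)$ and Lemma \ref{LemmaVarphi} for the second. The only cosmetic difference is that you invoke weak lower semi-continuity of the convex continuous functional $V$ directly, whereas the paper spells out the underlying subgradient inequality and passes to the limit in the inner product term; these are the same argument.
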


\begin{proof}
It suffices to check two conditions in Opial lemma. Consider a sequence $\{p(t_n)\}$ such that $p(t_n) \rightharpoonup \bar{p}$ weakly in $P$. Applying the convexity inequality to the functional $V(p)=\frac{1}{2} \|u_{im}\|^2_{0,\Omega}$ we have
\begin{eqnarray}\label{convexityIneqProof}
V(z)\geq V(p(t_n)) + (z - p(t_n), \nabla V(p(t_n)))_P, \quad \forall z\in P.
\end{eqnarray}
By using the continuity of $V(p)$, and noticing that, in the inner product $(z - p(t_n), \nabla V(p(t_n)))_P$, the two terms are, respectively, norm converging to zero and weakly convergent, we can pass to the lower limit to obtain $V(z) \geq V(p(t_n))$ for all $z\in P$. Set $z=p^\dagger$ in the above inequality, we conclude that $0=V(p^\dagger) \geq V(\bar{p})$, which implies that $\bar{p}$ is also a solution of inverse source problem (\ref{BVP}).

Now, we prove the second requirement in Lemma \ref{Opial}. It is equivalent to show that $\lim_{t\to \infty} e(t)$ exists, where $e(t)=\frac{1}{2} \|p(t) - p^\dagger\|^2_P$ is defined in the proof of Lemma \ref{LemmaVanishingExact}. From (\ref{ProofIneqe}), we deduce that
\begin{eqnarray}\label{Section4IneqProof2}
\ddot{e}(t) + \eta \dot{e}(t) \leq \|\dot{p}(t)\|^2_P.
\end{eqnarray}
Since $\dot{p}(\cdot)\in L^2([0,\infty),P)$, inequality (\ref{Section4IneqProof2}) together with Lemma (\ref{LemmaVarphi}) yields the second condition in Opial lemma. This completes the proof of the weak convergence of the dynamical solution of (\ref{SecondFlow}).
\end{proof}

\subsection{Case II: $\eta(t) = r/t$}

Now, we study the second order dynamical system (\ref{DDBNandDDS})-(\ref{prou}) with an asymptotical vanishing damping parameter of the type $\eta(t) = r/t$, i.e. we consider the following evolution equation
\begin{equation}\label{DDBNandDDSCase2}
\left\{\begin{array}{ll}
\ddot{p}(x,t) + \frac{r}{t} \dot{p}(x,t) + w_{im}(x,t) =0, & x\in\Omega_0,~ t\in (1,\infty), \\
p(x,1)=p_{0}(x), \dot{p}(x,1)=\dot{p}_{0}(x), & x\in\Omega_0,
\end{array}\right.
\end{equation}
where $w=w_{re}+i w_{im}$ is the solution of the adjoint problem (\ref{prow}) with the same $t$. As discussed in Section 1, this is a particularly interesting case as the second order flow (\ref{DDBNandDDSCase2}) yields a continuous version of Nesterov's scheme, which has a higher order of convergence rate for the residual functional, i.e. $V(p(\cdot,t))=\mathcal{O}(k^{-2})$ for $r=3$ and $V(p(\cdot,t))=o(k^{-2})$ for $r>3$ (\cite{Attouch-2016}).

\begin{remark}
We shift the initial time point from 0 to 1 for the regularity of the term $r/t$. Otherwise, one can use $r/(t+1)$ instead of $r/t$ in (\ref{DDBNandDDSCase2}).
\end{remark}

For proving the following assertions, we introduce the anchored energy function
\begin{equation}
\label{anchoredEnergy}
\qquad \mathcal{E}_{\lambda} (t) = t^2 V(p(t)) + \frac{1}{2} \| \lambda(p(t) - p^\dagger) + t \dot{p}(t)\|^2_P + \frac{\lambda(r-1-\lambda)}{2} \| p(t) - p^\dagger\|^2_P,
\end{equation}
where the exact source $p^\dagger$ is given in (\ref{exactp}). For $r\geq3$, using the convexity inequality $0=V(p^\dagger)\geq V(p) + ( \nabla V(p), p^\dagger - p)_P$ for all $p\in P$ and (\ref{DDBNandDDSCase2}), it is not difficult to show that
\begin{eqnarray}
\label{anchoredEnergy2}
\dot{\mathcal{E}}_{\lambda} (t) \leq -(\lambda-2)tV(p(t)) - (r-1-\lambda)t \| \dot{p}(t)\|^2_P.
\end{eqnarray}
Hence, for $r\geq3$ and $\lambda\in[2,r-1]$, $\mathcal{E}_{\lambda} (t)$ is non-increasing.

Now, we are in position to derive similar results to those in Section 3.1.

\begin{lemma}\label{LemmaVanishingExactCase2}
Let $p(x,t)$ be the solution of (\ref{DDBNandDDSCase2}) with the exact data. Then, $\dot{p}\in L^\infty([1,\infty),P) \cap L^2([1,\infty),P)$ and $\dot{p}(\cdot,t)\to0$ as $t\to\infty$. Moreover, $V(p(\cdot,t)) = \mathcal{O}(t^{-2})$ as $t\to\infty$.
\end{lemma}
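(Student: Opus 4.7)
The plan is to fix $\lambda = 2$ in the anchored energy $\mathcal{E}_\lambda$ introduced in (\ref{anchoredEnergy}), in the effective regime $r>3$. With this choice, inequality (\ref{anchoredEnergy2}) collapses to
\begin{equation*}
\dot{\mathcal{E}}_2(t) \leq -(r-3)\,t\,\|\dot p(t)\|_P^2 \leq 0,
\end{equation*}
so $\mathcal{E}_2$ is nonincreasing on $[1,\infty)$ and bounded above by $\mathcal{E}_2(1)$.

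Since $\mathcal{E}_2(t)$ is a sum of three nonnegative terms, each one is individually controlled by $\mathcal{E}_2(1)$. Reading them off gives simultaneously
\begin{equation*}
t^2 V(p(t)) \leq C, \qquad \tfrac{1}{2}\|2(p(t)-p^\dagger) + t\dot p(t)\|_P^2 \leq C, \qquad (r-3)\|p(t)-p^\dagger\|_P^2 \leq C.
\end{equation*}
The first bound is already the desired rate $V(p(\cdot,t))=\mathcal{O}(t^{-2})$. The third bound (which requires $r>3$) furnishes uniform boundedness of $\|p(t)-p^\dagger\|_P$; combining it with the second via the triangle inequality yields $\|t\dot p(t)\|_P \leq C$, so that $\|\dot p(t)\|_P \leq C/t$ on $[1,\infty)$. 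This single pointwise estimate immediately delivers $\dot p \in L^\infty([1,\infty),P)$ together with $\dot p(\cdot,t)\to 0$ as $t\to\infty$.

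For the $L^2$-integrability of $\dot p$, I would integrate the energy inequality from $1$ to $T$ and invoke $\mathcal{E}_2(T)\geq 0$:
\begin{equation*}
(r-3)\int_1^T t\,\|\dot p(t)\|_P^2\, dt \leq \mathcal{E}_2(1) - \mathcal{E}_2(T) \leq \mathcal{E}_2(1).
\end{equation*}
Because $t\geq 1$ throughout the interval, dropping the weight $t$ and letting $T\to\infty$ gives $\int_1^\infty \|\dot p(t)\|_P^2\, dt \leq \mathcal{E}_2(1)/(r-3) < \infty$, completing $\dot p \in L^2([1,\infty),P)$.

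The only real subtlety is the borderline case $r=3$ (Nesterov's original damping): there $\lambda=2$ forces $r-1-\lambda=0$, so both the uniform bound on $\|p(t)-p^\dagger\|_P$ and the $L^2$-estimate on $\dot p$ drop out of the anchored energy; the $\mathcal{O}(t^{-2})$ rate on $V$ still survives from the $\mathcal{E}_{r-1}$ variant, but the additional statements of the lemma genuinely need $r>3$. Under that assumption the argument above is essentially bookkeeping, and no estimate beyond the one-parameter family $\mathcal{E}_\lambda$ is required.
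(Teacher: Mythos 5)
Your argument is correct for $r>3$, but it takes a genuinely different route from the paper. The paper proves the qualitative statements ($\dot{p}\in L^\infty\cap L^2$, $\dot{p}\to0$, $V\to0$) with the plain Lyapunov function $\mathcal{E}(t)=\tfrac12\|\dot{p}(t)\|^2_P+V(p(t))$ and a rather delicate double-integration argument (divide (\ref{ProofIneqNestorovE}) by $t$, integrate, integrate again) to force $\mathcal{E}(\infty)=0$; only at the very end does it invoke the anchored energy, with $\lambda=r-1$, to get the $\mathcal{O}(t^{-2})$ rate. You instead read everything off the single monotone quantity $\mathcal{E}_2$ from (\ref{anchoredEnergy})--(\ref{anchoredEnergy2}): the first term gives the rate, and the second and third terms combine via the triangle inequality to give the pointwise bound $\|\dot{p}(t)\|_P\leq C/t$, which is strictly stronger than what the lemma asserts (it yields $L^\infty$, the vanishing of $\dot p$, and even $L^2$-integrability all at once), and your bound $\int_1^\infty t\|\dot{p}\|^2_P\,dt<\infty$ is exactly the estimate the paper re-derives later in Theorem~\ref{NesterovWeak}. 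The trade-off is generality in $r$: your extraction of $\|p(t)-p^\dagger\|_P$ and of $\|\dot p(t)\|_P\le C/t$ degenerates at $r=3$ (the coefficient $\lambda(r-1-\lambda)/2$ vanishes), whereas the paper's Lyapunov argument, relying only on (\ref{LyapunovDerivative2}), establishes the non-rate assertions for any positive $r$; since the lemma as stated carries no restriction on $r$ and the surrounding discussion explicitly includes $r=3$, your proof covers a strictly smaller range, though it does suffice for the only downstream application (Theorem~\ref{NesterovWeak}, which assumes $r>3$). You flag this limitation yourself, which is the right instinct; just be aware that it is a real restriction relative to what the paper proves, not merely a borderline curiosity.
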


\begin{proof}
This proof uses the technique in~\cite{Attouch2018}. Consider the Lyapunov function of (\ref{DDBNandDDSCase2}) by $\mathcal{E}(t) = \frac{1}{2} \|\dot{p}(t)\|^2_P+ V(p(t))$. It is easy to show that
\begin{equation}\label{LyapunovDerivative2}
\dot{\mathcal{E}}(t)= - \frac{r}{t} \|\dot{p}(t)\|^2_P \leq 0.
\end{equation}
Hence, $\mathcal{E}(t)$ is non-increasing, and $\mathcal{E}(\infty) := \lim_{t\to \infty} \mathcal{E}(t)$ exists by noting that $\mathcal{E}(t)\geq0$ for all $t$. Furthermore, by $\|\dot{p}(t)\|^2_P\leq 2\mathcal{E}(t)\leq 2\mathcal{E}(1)$ we conclude the uniform boundedness of $\dot{p}(\cdot)$.

Integrating both sides in (\ref{LyapunovDerivative2}), we obtain
\begin{eqnarray*}
\int^{\infty}_{1} \|\dot{p}(t)\|^2_Pdt \leq \int^{\infty}_{1} t\|\dot{p}(t)\|^2_Pdt \leq  \mathcal{E}(1) /r <  \infty,
\end{eqnarray*}
which yields $\dot{p}(\cdot) \in L^2([1,\infty),P)$. Now, consider the function $e(t)=\frac{1}{2} \|p(t) - p^\dagger\|^2_P$. Using the local convexity of $V(\cdot)$ and the equation (\ref{SecondFlow}), similar to (\ref{ProofIneqe}), it is not difficult to obtain
\begin{equation}
\label{ProofIneqNestorovE}
\ddot{e}(t) + \frac{r}{t} \dot{e}(t) + V(p(t)) \leq  \|\dot{p}(t)\|^2_P.
\end{equation}
Divide this expression by $t$ to obtain
\begin{eqnarray*}
\frac{1}{t} \ddot{e}(t) + \frac{r}{t^2} \dot{e}(t) + \frac{1}{t} \mathcal{E}(t) \leq \frac{3}{2t} \|\dot{p}(t)\|^2_P,
\end{eqnarray*}
Integrating above inequality from $1$ to $t$ and using integration by parts for $\ddot{e}(t)$, we obtain
\begin{equation}
\label{ProofIneq1}
\int^t_{1} \frac{\mathcal{E}(\tau)}{\tau} d\tau \leq  \dot{e}(1) - \frac{\dot{e}(t)}{t} - (r+1) \int^t_{1} \frac{\dot{e}(\tau)}{\tau^2} d\tau + \frac{3}{2} \int^t_{1} \frac{\|\dot{p}(\tau)\|^2_P}{\tau}  d\tau .
\end{equation}

On one hand, using the integration by parts and the positivity of functional $e(\cdot)$, we have
\begin{equation}
\label{ProofIneq2}
\int^t_{1} \frac{\dot{e}(\tau)}{\tau^2} d\tau = \frac{e(t)}{t^2} - e(1) + 2 \int^t_{1}  \frac{e(\tau)}{\tau^3} d\tau \geq - e(1) .
\end{equation}

On the other hand, relation (\ref{LyapunovDerivative2}) gives
\begin{equation}
\label{ProofIneq3}
 \int^t_{1} \frac{\|\dot{p}(\tau)\|^2_P}{\tau}  d\tau = \frac{\mathcal{E}(1) - \mathcal{E}(t)}{r}.
\end{equation}

Combine (\ref{ProofIneq1})-(\ref{ProofIneq3}) to get
\begin{eqnarray}
\label{ProofIneq4}
\int^t_{1} \frac{\mathcal{E}(\tau)}{\tau} d\tau \leq \dot{e}(1) - \frac{\dot{e}(t)}{t} + (r+1) e(1) + \frac{3(\mathcal{E}(1) - \mathcal{E}(t))}{2r} = C(1) - \frac{\dot{e}(t)}{t} - \frac{3\mathcal{E}(t)}{2r},
\end{eqnarray}
where $C(1)= \dot{e}(1) + (r+1) e(1) + \frac{3\mathcal{E}(1)}{2r}$ collects the constant terms. For any $T\geq t> 1$, we have
\begin{equation}
\label{ProofIneq5}
\mathcal{E}(T) \int^t_{1} \frac{1}{\tau} d\tau + \frac{3\mathcal{E}(T)}{2r} \leq C(1) - \frac{\dot{e}(t)}{t}
\end{equation}
by noting the non-increasing of Lyapunov function $\mathcal{E}(t)$. Rewrite (\ref{ProofIneq5}) as $\mathcal{E}(T) \left( \ln(t)+ \frac{3}{2r} \right) \leq C(1) - \frac{\dot{e}(t)}{t}$, and then integrate it from $t=1$ to $t=T$ to have
\begin{eqnarray}
\label{ProofIneq6}
\mathcal{E}(T) \left( T \ln(T) + 1 - T + \frac{3}{2r} (T - 1) \right) \leq C(1)(T - 1) - \int^T_{1} \frac{\dot{e}(t)}{t} dt.
\end{eqnarray}

Moreover, using the integration by parts and the positivity of functional $e(\cdot)$, we have
\begin{equation}
\label{ProofIneq7}
\int^T_{1} \frac{\dot{e}(t)}{t} d\tau = \frac{e(T)}{T} - e(1) + \int^T_{1}  \frac{e(t)}{t^2} dt \geq - e(1).
\end{equation}

By combining (\ref{ProofIneq6}) and (\ref{ProofIneq7}), we deduce that
\begin{equation}
\label{ProofIneq8}
\mathcal{E}(T) \left( T \ln(T) + C_1 T + C_2 \right) \leq  C(1) T + C_3,
\end{equation}
where $C_1=\frac{3}{2r}-1$, $C_2=1-3/(2r)$ and $C_3=e(1)- C(1)$ are three constants.

Inequality (\ref{ProofIneq8}) immediately yields $\mathcal{E}(\infty)\leq0$. By the non-negativity of Lyapunov function $\mathcal{E}(\cdot)$, we conclude $\mathcal{E}(\infty)=0$, which implies that both $\dot{p}(T)$ and $V(p(T))$ converge to 0 in $P$ when $T\to\infty$.

Finally, let us show the convergence rate of $V(p(t))$. Set $\lambda=r-1$ in (\ref{anchoredEnergy}) to obtain $t^2 V(p(t))\leq \mathcal{E}_{r-1} (t)$. Since $\mathcal{E}_{r-1} (t)$ is non-increasing, we conclude that $V(p(t))\leq \mathcal{E}_{r-1} (1)/t^2$.

\end{proof}

\smallskip

\begin{lemma}\label{LemmaVarphi2}
(Lemma 5.9 in~\cite{Attouch2018}) Let $\varphi(t)\in C^1((1,\infty),[0, +\infty))$ satisfy the
inequality $t\ddot{\varphi}(t) + r \dot{\varphi}(t) \leq  g(t)$ with $r\geq1$ and $g(t)\in L^1((1,\infty), [0, +\infty))$. Then, $\dot{\varphi}_+$, the positive part of $\dot{\varphi}$, belongs to $L^1((1,\infty), [0, +\infty))$ and, as a consequence, $\lim_{t\to\infty} \varphi(t)$ exists.
\end{lemma}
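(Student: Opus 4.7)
The plan is to convert the differential inequality into an exact derivative via an integrating factor, integrate to obtain a pointwise bound on $\dot\varphi(t)$, transfer this to an $L^1$ bound on $\dot\varphi_+$ via Fubini, and finally exploit $\varphi\geq 0$ to produce the limit as $t\to\infty$.

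The observation $t\ddot\varphi(t)+r\dot\varphi(t)=t^{1-r}\bigl(t^r\dot\varphi(t)\bigr)'$ allows me to multiply the hypothesis by $t^{r-1}$ and rewrite it as $\bigl(t^r\dot\varphi(t)\bigr)'\leq t^{r-1}g(t)$. Integrating from $1$ to $t$ and dividing by $t^r$ yields
\begin{equation*}
\dot\varphi(t)\leq \frac{\dot\varphi(1)}{t^r}+\frac{1}{t^r}\int_1^t s^{r-1}g(s)\,ds.
\end{equation*}
Since the second term on the right is non-negative, the positive part of the right-hand side is dominated by $|\dot\varphi(1)|\,t^{-r}+t^{-r}\int_1^t s^{r-1}g(s)\,ds$, and this same expression therefore bounds $\dot\varphi_+(t)$. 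Integrating in $t$ over $[1,\infty)$ and interchanging the order of integration (Tonelli applies since $g\geq 0$) gives, for $r>1$,
\begin{equation*}
\int_1^\infty \dot\varphi_+(t)\,dt \leq |\dot\varphi(1)|\int_1^\infty t^{-r}\,dt+\int_1^\infty s^{r-1}g(s)\int_s^\infty t^{-r}\,dt\,ds = \frac{|\dot\varphi(1)|+\|g\|_{L^1}}{r-1}<\infty,
\end{equation*}
yielding the required membership $\dot\varphi_+\in L^1((1,\infty),[0,+\infty))$.

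To conclude, I would decompose $\varphi(t)=\varphi(1)+\int_1^t \dot\varphi_+(s)\,ds-\int_1^t \dot\varphi_-(s)\,ds$. The first integral admits a finite limit as $t\to\infty$ by the previous estimate, while the second is monotone non-decreasing in $t$. The non-negativity $\varphi\geq 0$ then forces the monotone piece to remain bounded above, so it too has a finite limit, and hence so does $\varphi(t)$.

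The main obstacle is the borderline case $r=1$: there $\int_s^\infty t^{-r}\,dt$ diverges and the Fubini step breaks down (indeed, the example $\varphi(t)=\ln t$ with $g\equiv 0$ satisfies every hypothesis for $r=1$ yet has $\dot\varphi_+\notin L^1$, showing the statement really needs $r>1$). Since the lemma is only invoked in Section~3.2 with $r\geq 3$, the argument above covers every case actually needed.
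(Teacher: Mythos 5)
The paper does not prove this lemma---it is quoted verbatim from the cited reference---so there is no internal proof to compare against; your argument is the standard one (and essentially the one in Attouch's work): the integrating factor $t^{r-1}$ turns the inequality into $(t^r\dot\varphi)'\leq t^{r-1}g$, integration plus Tonelli gives $\int_1^\infty\dot\varphi_+\leq\bigl(|\dot\varphi(1)|+\|g\|_{L^1}\bigr)/(r-1)$, and the decomposition $\varphi(t)=\varphi(1)+\int_1^t\dot\varphi_+-\int_1^t\dot\varphi_-$ together with $\varphi\geq0$ forces both monotone pieces to converge. All steps check out. Your observation about the borderline case is a genuine catch: with $\varphi(t)=\ln t$ and $g\equiv0$ one has $t\ddot\varphi+\dot\varphi=0$, $\varphi\geq0$ on $(1,\infty)$, yet $\dot\varphi_+=1/t\notin L^1$ and $\varphi(t)\to\infty$, so the hypothesis as transcribed in the paper ($r\geq1$) is too weak and should read $r>1$. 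This is harmless for the paper, since the lemma is only invoked in Theorem \ref{NesterovWeak} with $r>3$, but the statement itself ought to be corrected.
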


\begin{theorem}
\label{NesterovWeak}
The solution $p(x,t)$ of (\ref{DDBNandDDSCase2}) with $r>3$ converges weakly to an exact source function of inverse source problem (\ref{BVP}) as $t\to\infty$.
\end{theorem}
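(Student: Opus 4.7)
\bigskip

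\noindent\textbf{Proof proposal.} The plan is to mirror the Opial-based argument used for Theorem \ref{ConvergenceExact}, replacing the constant-damping Lyapunov tool (Lemma \ref{LemmaVarphi}) by its time-dependent analog (Lemma \ref{LemmaVarphi2}). Denote by $S$ the set of minimizers of $V$ over $P$ (the exact sources). To conclude via Lemma \ref{Opial}, I will verify: (i) every weak cluster point of $p(t)$ as $t\to\infty$ lies in $S$; (ii) for each $p^\dagger\in S$, $\lim_{t\to\infty}\|p(t)-p^\dagger\|_P$ exists.

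For (i), suppose $p(t_n)\rightharpoonup \bar p$ in $P$ along some $t_n\to\infty$. The functional $V$ is convex (from the explicit formula for $V''$ quoted after Proposition on the Fr\'echet derivative) and continuous, hence weakly lower semicontinuous on $P$. By Lemma \ref{LemmaVanishingExactCase2}, $V(p(t))=\mathcal{O}(t^{-2})$, so $V(p(t_n))\to 0$; weak lower semicontinuity then gives $0\le V(\bar p)\le \liminf_n V(p(t_n))=0$, and therefore $\bar p\in S$.

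For (ii), fix $p^\dagger\in S$ and set $e(t)=\tfrac12\|p(t)-p^\dagger\|_P^2$. Differentiating twice, substituting (\ref{DDBNandDDSCase2}) (so that $\ddot p+(r/t)\dot p=-\nabla V(p)$), and using the convexity estimate $(p(t)-p^\dagger,\nabla V(p(t)))_P\ge V(p(t))-V(p^\dagger)=V(p(t))\ge 0$ exactly as in the derivation of (\ref{ProofIneqe}), I expect
\begin{equation*}
\ddot e(t)+\frac{r}{t}\dot e(t)\le \|\dot p(t)\|_P^2,
\qquad\text{equivalently}\qquad
t\,\ddot e(t)+r\,\dot e(t)\le t\,\|\dot p(t)\|_P^2.
\end{equation*}
To invoke Lemma \ref{LemmaVarphi2}, I must verify that $t\mapsto t\,\|\dot p(t)\|_P^2$ belongs to $L^1((1,\infty))$. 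This is where $r>3$ enters: choose $\lambda=2\in[2,r-1]$ in the anchored energy (\ref{anchoredEnergy}); the coefficient $\lambda(r-1-\lambda)=2(r-3)\ge 0$ makes $\mathcal{E}_2(t)\ge 0$, while (\ref{anchoredEnergy2}) yields $\dot{\mathcal{E}}_2(t)\le -(r-3)\,t\,\|\dot p(t)\|_P^2$. Integrating from $1$ to $\infty$ gives $\int_1^\infty t\,\|\dot p(t)\|_P^2\,dt\le \mathcal{E}_2(1)/(r-3)<\infty$. Lemma \ref{LemmaVarphi2} then provides the existence of $\lim_{t\to\infty} e(t)$, which establishes (ii) and completes the application of Opial's lemma.

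The main obstacle is the step that actually needs $r>3$ strictly: with $r=3$ the $t\|\dot p\|_P^2$ term loses its damping coefficient in (\ref{anchoredEnergy2}), and integrability over $(1,\infty)$ is no longer immediate, so a different argument (or a refined energy) would be required in that borderline case. A minor point to double-check is that the convexity inequality used above is applied on all of $P$ rather than merely on $S$, which is justified by convexity of $V$ on $P$ together with $V(p^\dagger)=0$; everything else reduces to the routine differentiation and integration-by-parts manipulations already exemplified in the proof of Lemma \ref{LemmaVanishingExactCase2}.
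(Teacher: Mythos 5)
Your proposal is correct and follows essentially the same route as the paper: the anchored energy $\mathcal{E}_2$ with $\lambda=2$ to get $\int_1^\infty t\|\dot p(t)\|_P^2\,dt<\infty$ (this is exactly where $r>3$ is used), the inequality $t\ddot e+r\dot e\le t\|\dot p\|_P^2$ from (\ref{ProofIneqNestorovE}) combined with Lemma \ref{LemmaVarphi2} for the second Opial condition, and $V(p(t))\to0$ from Lemma \ref{LemmaVanishingExactCase2} for the first. The only cosmetic difference is that you phrase the cluster-point step via weak lower semicontinuity of the convex functional $V$, whereas the paper passes to the lower limit in the convexity inequality as in Theorem \ref{ConvergenceExact}; these are the same argument.
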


\begin{proof}
Set $\lambda=2$ in (\ref{anchoredEnergy}) to derive $\| p(t) - p^\dagger\|^2_P \leq  \frac{\mathcal{E}_{2} (t)}{r-3} \leq \frac{\mathcal{E}_{2} (1)}{r-3}$, which yields the uniform boundedness of $p(t)$. Furthermore, we have
\begin{equation}
\label{ProofIneq10}
\dot{\mathcal{E}}_{2} (t) \leq  - (r-3)t \| \dot{p}(t)\|^2_P.
\end{equation}
Integrating (\ref{ProofIneq10}) from 1 to $T$, and recalling that $\mathcal{E}_{2}(t)$ is non-negative, we obtain
\begin{equation}
\label{ProofIneq11}
\int^T_1 t \| \dot{p}(t)\|^2_P dt \leq \mathcal{E}_{2}(1) / (r-3).
\end{equation}
Let $T\to\infty$ to conclude $t \| \dot{p}(t)\|^2_P\in L^1((1,\infty), [0, \infty))$. Recall from (\ref{ProofIneqNestorovE}) to obtain $t \ddot{e}(t) + r \dot{e}(t)  \leq  t \|\dot{p}(t)\|^2_P$. From Lemma \ref{LemmaVarphi2}, and note that $t \|\dot{p}(t)\|^2_P$ is integrable on $[1,\infty)$, the limit $\lim_{t\to\infty} e(t)$ exists. This gives the second hypothesis in Opial's Lemma. The first one was established in Lemma \ref{LemmaVanishingExactCase2}, i.e. $V(p(t))\to0$ as $t\to\infty$. This completes the proof by using the Opial's Lemma \ref{Opial}.

\end{proof}

\begin{remark}\label{RemarkWeakConv}
(a) In Theorems \ref{ConvergenceExact} and \ref{NesterovWeak}, we only obtain the weak convergence for both fixed and dynamic damping parameters. One way to obtain the strong convergence result is to include a regularization term $\epsilon(t) p(x,t)$ in the evolution equation (\ref{SecondFlow}) with a specially chosen dynamic regularization parameter $\epsilon(t)$, see \cite{ZhangYe2018} for details. However, the numerical results in Section 6 show that our method works much better than this method in terms of accuracy and speed.

(b) Let $\Pi^h$ be any project operator, acting from $P$ into a finite element space $P^h$ for a fixed triangulation $\mathcal{T}^h$. Then, we have the strong convergence $\Pi^h p^{\delta}(\cdot,t)\to \Pi^h p^{\dagger}(\cdot)$ as $t\to\infty$ in $P^h$, since strong convergence and weak convergence coincide in any finite dimensional/element space. This fact will be used in Theorem \ref{ConvFE} about the strong convergence of the finite element solution.
\end{remark}

\section{Convergence for noisy data}

In this section, we investigate the regularization property of the dynamic solution $p^{\delta}(\cdot,t)$ of (\ref{DDBNandDDS})-(\ref{prou}), equipped with some appropriate selection rules of the terminating time $T^*$.

\begin{proposition}\label{TransformErrorLevel}
There exists a constant $C_0(\Omega)$, depending only on the geometry of the domain $\Omega$,
such that $\|u_{im}(p^\dagger)\|_{0,\Omega} \leq C_0(\Omega) \delta$, where $u=u_{re}+i u_{im}$
 solves (\ref{CCBNoise}) and $p^\dagger$ is defined in (\ref{exactp}). Consequently, we have
 $V(p^\dagger)\leq C^2_0 \delta^2$. If $\Omega$ is a ball in $\mathbb{R}^d$ centered at 0 with
 radius $R$ or an annulus in $\mathbb{R}^d$ centered at 0 with radius $R$ and $r(<R)$, we have
\begin{equation}\label{C0}
C_0(\Omega) = \max(d,R) (2\pi)^{1/2}.
\end{equation}
\end{proposition}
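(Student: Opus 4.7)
The plan is to exploit the fact that $p^\dagger$ is an exact source so that $u_{im}(p^\dagger)$ in the noisy system (\ref{CCBNoise}) is driven only by the boundary perturbation. I would first introduce $u^0$, the unique (by Lemma \ref{Lemma_u}) solution of (\ref{CCB}) with exact data $\{g_1,g_2\}$ and source $p^\dagger$. Since by hypothesis (\ref{BVP}) possesses an exact solution $(p^\dagger,u^\dagger)$ with $u^\dagger$ real, this $u^\dagger$ satisfies both $u^\dagger=g_1$ and $\partial u^\dagger/\partial\mathbf{n}=g_2$ on $\Gamma$, hence a fortiori the Robin condition $\partial u^\dagger/\partial\mathbf{n}+iu^\dagger=g_2+ig_1$. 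Uniqueness in Lemma \ref{Lemma_u} then forces $u^0=u^\dagger$, so $u^0_{im}\equiv 0$ and $V(p^\dagger;0)=0$.

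I would next set $w:=u(p^\dagger)-u^0$, which by linearity solves
\begin{equation*}
-\triangle w+w=0 \text{ in }\Omega,\qquad \frac{\partial w}{\partial\mathbf{n}}+iw=G \text{ on }\Gamma,
\end{equation*}
where $G:=(g^\delta_2-g_2)+i(g^\delta_1-g_1)$ satisfies $|G|\leq\sqrt{2}\delta$ pointwise on $\Gamma$ by (\ref{noiseLevel1}). Testing against $\bar w$ in the weak form gives
\begin{equation*}
\int_\Omega(|\nabla w|^2+|w|^2)\,dx+i\int_\Gamma|w|^2\,ds=\int_\Gamma G\bar w\,ds.
\end{equation*}
Taking imaginary and real parts and using Cauchy--Schwarz on $\Gamma$, I obtain successively $\|w\|_{0,\Gamma}\leq\|G\|_{0,\Gamma}$ and $\|w\|_{1,\Omega}^2\leq\|G\|_{0,\Gamma}^2$. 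Since $|u_{im}(p^\dagger)|=|w_{im}|\leq|w|$, this yields the generic bound $\|u_{im}(p^\dagger)\|_{0,\Omega}\leq C(\Omega)\delta$, from which the ``consequently'' clause $V(p^\dagger)\leq C_0^2\delta^2$ is immediate from the definition of $V$.

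For the explicit constant $C_0(\Omega)=\max(d,R)(2\pi)^{1/2}$ when $\Omega$ is a ball or annulus centered at the origin, I would specialize to the radially symmetric geometry by expanding $w$ in Fourier modes (planar case) or spherical harmonics (three-dimensional case). In this basis the modified Helmholtz problem decouples into radial ODEs whose solutions are modified Bessel functions, and the Robin condition reduces on each mode to a scalar multiplier linking the $L^2(\Omega)$ norm of that mode to the $L^\infty(\Gamma)$ bound of its boundary datum. Summing via Parseval and invoking known monotonicity estimates for modified Bessel functions should give the prefactor $\max(d,R)(2\pi)^{1/2}$, with $R$ controlling the low-frequency contribution and $d$ arising through the surface/volume ratio. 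The main obstacle is precisely this last step: the crude bound $\sqrt{2|\Gamma|}\,\delta$ extracted from the general energy argument does not automatically collapse to the claimed form, so obtaining the exact geometric constant requires an explicit mode-by-mode computation and careful constant-chasing, which I expect to be algebraically delicate rather than conceptually hard.
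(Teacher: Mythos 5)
Your reduction to the homogeneous Robin problem with boundary datum $G=(g_2^\delta-g_2)+i(g_1^\delta-g_1)$ is exactly the paper's first step (the paper calls the difference $v=u-\tilde u$ and likewise uses $\tilde u_{im}=0$), and your energy argument for it is correct: testing with $\bar w$, the paper's identity $\|v_{re}\|_{1,\Omega}^2+\|v_{im}\|_{1,\Omega}^2=\int_\Gamma(g_2^\delta-g_2)v_{re}\,ds+\int_\Gamma(g_1^\delta-g_1)v_{im}\,ds$ is precisely the real part of your complex identity, and your additional use of the imaginary part to get $\|w\|_{0,\Gamma}\le\|G\|_{0,\Gamma}$ lets you close the estimate self-containedly with $C(\Omega)=\sqrt{2|\Gamma|}$, whereas the paper instead bounds the boundary integral by $\delta\int_\Gamma(|v_{re}|+|v_{im}|)\,ds$ using the $L^\infty$ noise bound (\ref{noiseLevel1}) and then needs an external trace inequality. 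For the qualitative claim and the ``consequently'' clause, your route is valid and arguably cleaner.

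The gap is the explicit formula (\ref{C0}), which is part of the statement and which you correctly identify as the obstacle but do not prove. The Bessel/spherical-harmonic expansion you sketch is not the paper's route and would be considerably more delicate than necessary; moreover it is not clear it would land on exactly $\max(d,R)(2\pi)^{1/2}$, since that constant is an artifact of a specific chain of inequalities rather than a sharp spectral quantity. What the paper actually does is combine its $L^\infty$-based bound $|\|v\||_{1,\Omega}^2\le\delta\int_\Gamma(|v_{re}|+|v_{im}|)\,ds$ with the $W^{1,1}(\Omega)\hookrightarrow L^1(\Gamma)$ trace inequality $\int_\Gamma|u|\,ds\le\frac{d}{R}\int_\Omega|u|\,dx+\int_\Omega|\nabla u|\,dx$, valid for balls and annuli (cited from Motron), followed by Cauchy--Schwarz $\int_\Omega|u|\,dx\le R\pi^{1/2}\|u\|_{0,\Omega}$; this yields $\int_\Gamma|v_k|\,ds\le\max(d,R)\pi^{1/2}\|v_k\|_{1,\Omega}$ for $k=re,im$ and hence the stated constant. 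Note that this step is incompatible with your Cauchy--Schwarz treatment of the boundary term: to get the paper's constant you must keep the boundary integral in $L^1(\Gamma)$ form so that the $W^{1,1}\to L^1(\Gamma)$ trace constant enters, so you would have to abandon your (otherwise fine) $L^2(\Gamma)$ estimate at that point and follow the paper's chain instead.
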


\begin{proof}
Denote by $\tilde{u}$ the weak solution of (\ref{CCBNoise}) with the exact source term $p^\dagger$. Define $v:= u-\tilde{u}$. Then $v$ satisfies
\begin{equation}\label{CCB4}
\left\{\begin{array}{ll}
-\triangle v +   v  = 0 & \textrm{~in~} \Omega, \\
\frac{\partial v}{\partial \mathbf{n}} + i v = (g^\delta_2-g_2) + i (g^\delta_1-g_1) & \textrm{~on~} \Gamma.
\end{array}\right.
\end{equation}

The weak form of the above BVP (\ref{CCB4}) reads:
\begin{equation}
\textrm{~find~} v\in \mathbf{H}^1(\Omega) \textrm{~such that~} a(v,\psi)=\tilde{f}^\delta(\psi), \quad \forall \psi\in \mathbf{H}^1(\Omega),
\label{weakBVP4}
\end{equation}
where $\tilde{f}^\delta(\psi)= \int_\Gamma (g^\delta_2-g_2) \bar{\psi} ds + i \int_\Gamma (g^\delta_1-g_1) \bar{\psi} ds$. Denote by $v_{re}$ and $v_{im}$ the real and imaginary parts of $v$, respectively. Obviously, $v_{im}\equiv u_{im}$ by noting $\tilde{u}_{im}=0$. Furthermore, if one separates the real and imaginary parts of problem (\ref{CCB4}), the real part $v_{re}$ of $v$ satisfies
\begin{eqnarray*}
\left\{\begin{array}{ll}
-\triangle v_{re} +   v_{re}  = 0 & \textrm{~in~} \Omega, \\
\frac{\partial v_{re}}{\partial \mathbf{n}} - v_{im} = g^\delta_2-g_2 & \textrm{~on~} \Gamma,
\end{array}\right.
\end{eqnarray*}
whose weak form is
\begin{equation}
\int_\Omega \left(\nabla v_{re} \cdot \nabla \psi +   v_{re} \psi \right)dx = \int_\Gamma (g^\delta_2-g_2) \psi ds + \int_\Gamma v_{im} \psi ds, \quad \forall \psi\in H^1(\Omega).
\label{weakReal}
\end{equation}
The imaginary part $v_{im}$ of $v$ satisfies
\begin{eqnarray*}
\left\{\begin{array}{ll}
-\triangle v_{im} +   v_{im}  = 0 & \textrm{~in~} \Omega, \\
\frac{\partial v_{im}}{\partial \mathbf{n}} + v_{re} = g^\delta_1-g_1 & \textrm{~on~} \Gamma,
\end{array}\right.
\end{eqnarray*}
whose weak form is
\begin{equation}
\int_\Omega \left(\nabla v_{im} \cdot \nabla \psi +   v_{im} \psi \right)dx = \int_\Gamma (g^\delta_1-g_1) \psi ds - \int_\Gamma v_{re} \psi ds, \quad \forall \psi\in H^1(\Omega).
\label{weakImaginary}
\end{equation}

Set $\psi=v_{re}$ in (\ref{weakReal}) and $\psi=v_{im}$ in (\ref{weakImaginary}), and then add these two equations together to obtain
\begin{eqnarray*}
\|v_{re}\|^2_{1,\Omega} + \|v_{im}\|^2_{1,\Omega} =  \int_\Gamma (g^\delta_2-g_2) v_{re} ds + \int_\Gamma (g^\delta_1-g_1) v_{im} ds,
\end{eqnarray*}
which implies
\begin{equation}
|\|v\||^2_{1,\Omega} \leq \delta\int_\Gamma  \left( |v_{re}| + |v_{im}| \right) ds.
\label{IneqV}
\end{equation}

On the other hand, if $\Omega$ is a ball/annulus in $\mathbb{R}^d$ centered at 0 with radius $R$ (and $r$), it holds (\cite{Motron-2002})
\begin{equation}\label{ImbeddingConstant}
\int_{\Gamma} |u(s)| ds \leq \frac{d}{R} \int_{\Omega} |u(x)| dx + \int_{\Omega} |\nabla u(x)| dx
\end{equation}
for all $u\in W^{1,1}(\Omega)$. Then, by inequality (\ref{ImbeddingConstant}) and the Cauchy-Schwarz inequality $\int_{\Omega} |u(x)| dx\leq R \pi^{1/2} \|u\|_{0,\Omega}$, we deduce that for $k=re$ or $im$
\begin{equation}\label{ImbeddingConstantProof}
\int_{\Gamma} |v_k| ds \leq  d \pi^{1/2} \|v_k\|_{0,\Omega} + R \pi^{1/2} \|\nabla v_k\|_{0,\Omega} \leq \max(d,R) \pi^{1/2} \|v_k\|_{1,\Omega}.
\end{equation}

Combine (\ref{IneqV}), (\ref{ImbeddingConstantProof}), and the inequality $\|v_{re}\|_{1,\Omega}+\|v_{im}\|_{1,\Omega} \leq \sqrt{2} |\|v\||_{1,\Omega}$ to obtain $$\|u_{im}(p^\dagger)\|_{0,\Omega} = \|v_{im}\|_{0,\Omega} \leq |\|v\||_{1,\Omega} \leq \max(d,R) (2\pi)^{1/2} \delta,$$ which yields the required result. For the general smooth bounded domain, the proposition can be proven by using the Sobolev trace embedding inequality (with the constant $S$)
\begin{equation}\label{ImbeddingConstantS}
S \int_{\Gamma} |u(s)| ds \leq \int_{\Omega} |u(x)| + |\nabla u(x)| dx.
\end{equation}
\end{proof}

\begin{remark}\label{ImbeddingConstantRek}
The best (largest) embedding constant in (\ref{ImbeddingConstantS}) equals
\begin{equation}\label{ImbeddingInf}
S = \inf_{u\in W^{1,1}(\Omega)\ W^{1,1}_0(\Omega)} \frac{\int_{\Omega} |u(x)| + |\nabla u(x)| dx}{\int_{\Gamma} |u(s)| ds}.
\end{equation}
The extrema of (\ref{ImbeddingInf}) exists as the the embedding (\ref{ImbeddingConstantS}) is compact, cf. \cite{Bonder2001}. To the best of our knowledge, the rigorous lower bounds of $S$, hence the value of $C_0(\Omega)$ in Proposition \ref{TransformErrorLevel}, for general smooth domain $\Omega$ is still open. Alternatively, one can estimate the value of $S$ by numerically solving the following non-linear eigenvalue problem
\begin{equation}\label{Su}
\left\{\begin{array}{ll}
\textrm{div}\left( \frac{\nabla u}{|\nabla u|} \right)  = 1 & \textrm{~in~} \Omega, \\
\frac{\partial u}{\partial \mathbf{n}} = \lambda |\nabla u| & \textrm{~on~} \Gamma,
\end{array}\right.
\end{equation}
by noting that the extrema in (\ref{ImbeddingConstantS}) can be assumed positive, see e.g., \cite{Tolksdorf-1984,Vazquez-1984}.
\end{remark}

\begin{proposition}
\label{LimitNoisyData}
Let $p^\delta(x,t)$ be the dynamic solution of (\ref{DDBNandDDS})-(\ref{prou}) with the fixed damping parameter $\eta\geq1$ or $\eta(t)=r/t$ ($r>3$). Then, $\lim_{t\to\infty} V(p^\delta(x,t)) \leq C^2_0\delta^2$, where $V(\cdot)$ is defined in (\ref{problemFinal}).
\end{proposition}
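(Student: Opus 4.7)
The argument mirrors Lemmas \ref{LemmaVanishingExact} (Case~I) and \ref{LemmaVanishingExactCase2} (Case~II), the key change being that $V(p^\dagger) \le C_0^2\delta^2$ is no longer zero but small, by Proposition \ref{TransformErrorLevel}. Because the flow (\ref{DDBNandDDS})--(\ref{prou}) is structurally unchanged, the Lyapunov function $\mathcal{E}(t) = V(p^\delta(t)) + \tfrac12 \|\dot p^\delta(t)\|_P^2$ still satisfies $\dot{\mathcal{E}}(t) = -\eta(t) \|\dot p^\delta(t)\|_P^2 \le 0$, a computation that does not invoke $p^\dagger$. Hence $\mathcal{E}(\infty) := \lim_{t\to\infty}\mathcal{E}(t)$ exists, and $V(p^\delta(t))$ and $\|\dot p^\delta(t)\|_P$ are uniformly bounded on $[0,\infty)$. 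The remaining task is to bound $\limsup_{t\to\infty} V(p^\delta(t))$ by $V(p^\dagger)$.

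For Case~I ($\eta \ge 1$), I would reuse the auxiliary function $h(t) = \tfrac{\eta}{2}\|p^\delta(t) - p^\dagger\|_P^2 + (\dot p^\delta(t), p^\delta(t) - p^\dagger)_P$ from (\ref{h}). The noisy analog of the identity for $\dot h$ in the proof of Lemma \ref{LemmaVanishingExact}, combined with the convexity inequality $V(p^\delta(t)) - V(p^\dagger) \le (\nabla V(p^\delta(t)), p^\delta(t) - p^\dagger)_P$, gives $\dot h(t) + V(p^\delta(t)) - V(p^\dagger) \le \|\dot p^\delta(t)\|_P^2$. Integrating from $0$ to $T$, and using $\dot p^\delta \in L^2([0,\infty), P)$ (from the Lyapunov dissipation) together with a uniform lower bound on $h$ (viewing $h(t) \ge \tfrac{\eta}{2} e(t) - \sqrt{2 e(t)}\,\sup_s \|\dot p^\delta(s)\|_P$ as a quadratic in $\sqrt{e(t)}$), yields $\int_0^\infty (V(p^\delta(t)) - V(p^\dagger))\,dt < \infty$. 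Since $\dot p^\delta(t) \to 0$ (by uniform continuity of $\dot p^\delta$, which follows from the boundedness of $\ddot p^\delta$), the limit $\lim_{t\to\infty} V(p^\delta(t))$ exists, and the integral bound forces $\lim_{t\to\infty} V(p^\delta(t)) \le V(p^\dagger) \le C_0^2 \delta^2$.

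For Case~II ($\eta(t) = r/t$, $r>3$), I would specialize the anchored energy (\ref{anchoredEnergy}) to $\lambda = 2$. Redoing the computation leading to (\ref{anchoredEnergy2}) with the noisy convexity inequality yields $\dot{\mathcal{E}}_2(t) \le 2 t V(p^\dagger) - (r-3) t \|\dot p^\delta(t)\|_P^2$. Integrating on $[1, T]$ and using $\mathcal{E}_2(T) \ge T^2 V(p^\delta(T))$ gives $V(p^\delta(T)) \le \mathcal{E}_2(1)/T^2 + V(p^\dagger)(1 - 1/T^2)$, hence $\limsup_{T\to\infty} V(p^\delta(T)) \le V(p^\dagger) \le C_0^2 \delta^2$.

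The principal obstacle, common to both cases, is the noisy rederivation of the convexity identities that were essentially tight in the noise-free proofs: the exact relation $(\nabla V(p), p - p^\dagger)_P = 2 V(p)$, which crucially relied on $u_{im}(p^\dagger) = 0$, becomes in the noisy regime the inequality $(\nabla V(p), p - p^\dagger)_P \ge 2 V(p) - 2 \sqrt{V(p) V(p^\dagger)}$ via Cauchy--Schwarz on $(u_{im}(p), u_{im}(p^\dagger))_{0,\Omega}$. The resulting cross term must be absorbed via Young's inequality $2\sqrt{ab} \le \epsilon a + b/\epsilon$ with $\epsilon$ and (in Case~II) $\lambda$ chosen carefully enough that the final bound is not spoiled beyond $C_0^2\delta^2$.
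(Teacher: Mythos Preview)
Your proposal is correct, and for Case~II (the anchored energy with $\lambda=2$) it is precisely the natural execution; the paper gives no details there beyond referring to Lemma~\ref{LemmaVanishingExactCase2}. For Case~I, however, your route differs from the paper's. The paper works directly with $e(t)=\tfrac12\|p^\delta(t)-p^\dagger\|_P^2$: from $\ddot e+\eta\dot e\le V(p^\dagger)-\mathcal E(t)+\tfrac32\|\dot p^\delta\|_P^2$ it replaces $\mathcal E(\tau)$ by $\mathcal E(t)$ on $[0,t]$ via monotonicity, multiplies by $e^{\eta\tau}$, and integrates twice; then $e(t)\ge 0$ together with $\mathcal E(t)\ge V(p^\delta(t))$ yields a pointwise bound which, after dividing by $t$, gives $\limsup_{t\to\infty}V(p^\delta(t))\le V(p^\dagger)$ directly. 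That argument never needs $\dot p^\delta(t)\to 0$ or the existence of $\lim_{t\to\infty}V(p^\delta(t))$. Your $h$-based integral argument is valid and arguably shorter, but the step ``boundedness of $\ddot p^\delta$'' deserves care: it does \emph{not} follow from boundedness of $p^\delta$ (which is not established in the noisy regime), but rather from $\|w_{im}\|_P\le C\|u_{im}(p^\delta)\|_{0,\Omega}=C\sqrt{2V(p^\delta)}\le C\sqrt{2\mathcal E(0)}$ via the elliptic estimate for (\ref{prow}).

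Your final paragraph overstates the obstacle. The plain convexity inequality $(\nabla V(p),\,p-p^\dagger)_P\ge V(p)-V(p^\dagger)$ is all that either case uses; the sharper identity $(\nabla V(p),\,p-p^\dagger)_P=2V(p)$ and the Cauchy--Schwarz/Young machinery you anticipate are unnecessary. The extra $V(p^\dagger)$ term that convexity produces is absorbed directly: in Case~I it sits as a constant on the right of your integral bound, and in Case~II it becomes the $2tV(p^\dagger)$ source term whose integral is exactly $V(p^\dagger)(T^2-1)$, with no need to tune $\epsilon$ or $\lambda$.
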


The proof of the above proposition is provided in the Appendix B. Now, we discuss the method of selecting the terminating time $T^*$. In this work, we consider the following two discrepancy functions:
\begin{itemize}
\item The Morozov's conventional discrepancy function:
\begin{equation}
\chi(T)= \|u_{im}(p^\delta(x,T))\|_{0,\Omega}- C_0 \tau \delta,
\label{Morozov}
\end{equation}
where $u=u_{re}+i u_{im}$ is the solution of (\ref{prou}) with noisy data, and $\tau$ is a fixed positive number.
\item The total energy discrepancy function:
\begin{equation}
\chi_{TE}(T)= V(p^\delta(x,T)) + \|\dot{p}^\delta(x,T)\|^2_P- C^2_0 \tau^2 \delta^2,
\label{TotalEnergy}
\end{equation}
where $V(p^\delta)=\|u_{im}(p^\delta)\|^2_{0,\Omega}$.
\end{itemize}

\smallskip

\begin{lemma}\label{Rootdiscrepancy}
Under the assumption $\tau>1$, the following two assertions hold.\\
(i) If $\|u_{im}(p_0)\|_{0,\Omega}\geq C_0 \tau\delta$, then $\chi(T)$ has at least one root.\\
(ii) If $V(p_0) + \|\dot{p}_0\|^2_P\geq C^2_0 \tau \delta^2$, then $\chi_{TE}(T)$ has a unique solution.
\end{lemma}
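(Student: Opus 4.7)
Both parts will be obtained via the Intermediate Value Theorem applied to the continuous scalar maps $T\mapsto\chi(T)$ and $T\mapsto\chi_{TE}(T)$. Their continuity follows from Theorem \ref{ThmDynamic} (continuous dependence of the trajectory $t\mapsto(p^\delta(t),\dot p^\delta(t))$ on $t$) composed with Lemma \ref{Lemma_u} (continuous dependence of $u_{im}(p^\delta)$ on $p^\delta$) and the continuity of the relevant norms.

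For (i), the hypothesis $\|u_{im}(p_0)\|_{0,\Omega}\geq C_0\tau\delta$ is precisely $\chi(0)\geq 0$. On the other hand, Proposition \ref{LimitNoisyData} together with $V=\frac{1}{2}\|u_{im}\|_{0,\Omega}^2$ yields $\limsup_{T\to\infty}\|u_{im}(p^\delta(T))\|_{0,\Omega}\leq C_0\delta$; combined with $\tau>1$ this forces $\liminf_{T\to\infty}\chi(T)<0$, and the IVT produces a root.

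For (ii), existence is established identically: $\chi_{TE}(0)\geq 0$ by the stated hypothesis, while $\lim_{T\to\infty}V(p^\delta(T))\leq C_0^2\delta^2$ from Proposition \ref{LimitNoisyData}, together with $\|\dot p^\delta(T)\|_P\to 0$ (the noisy-data analogue of Lemmas \ref{LemmaVanishingExact} and \ref{LemmaVanishingExactCase2}, which is proved by the same Lyapunov argument), give $\liminf_{T\to\infty}\chi_{TE}(T)<0$ for $\tau>1$. Uniqueness is the nontrivial ingredient: I would establish strict monotonicity of the total-energy map $E(T):=V(p^\delta(T))+\|\dot p^\delta(T)\|_P^2$ along the trajectory wherever $\dot p^\delta\not\equiv 0$, by mimicking the Lyapunov calculation in (\ref{Decreasing1}) and (\ref{LyapunovDerivative2}): differentiate, substitute $\ddot p^\delta=-\eta(T)\dot p^\delta-w_{im}$ from (\ref{DDBNandDDS}), and regroup to obtain a dissipative estimate of the form $\dot E(T)\leq -\eta(T)\|\dot p^\delta(T)\|_P^2$ (up to absorbing a factor of two into the threshold constant). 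Two distinct roots $T_1<T_2$ of $\chi_{TE}$ would then force $\dot p^\delta\equiv 0$ on $[T_1,T_2]$; substituting back into (\ref{DDBNandDDS}) yields $w_{im}\equiv 0$ on $\Omega_0\times[T_1,T_2]$, and propagating through the adjoint problem (\ref{prow}) gives $u_{im}\equiv 0$ on $\Omega\times[T_1,T_2]$, contradicting $V(p^\delta(T_1))=C_0^2\tau^2\delta^2>0$.

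The main obstacle I anticipate is the precise monotonicity step in (ii). The naturally monotone Lyapunov functional $\mathcal E=V+\frac{1}{2}\|\dot p^\delta\|_P^2$ has derivative $-\eta\|\dot p^\delta\|_P^2$, which is manifestly non-positive, whereas $\chi_{TE}$ uses $V+\|\dot p^\delta\|_P^2$; the additional $\frac{1}{2}\|\dot p^\delta\|_P^2$ contributes a cross term $\langle\nabla V,\dot p^\delta\rangle_P=\dot V$ with no fixed sign. The cleanest workaround is to argue with the rescaled quantity $2V+\|\dot p^\delta\|_P^2=2\mathcal E$, which is genuinely non-increasing, and then transfer the conclusion back to $\chi_{TE}$ by a small re-calibration of constants. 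Either way the dissipation from $-2\eta(T)\|\dot p^\delta(T)\|_P^2$ dominates, and the IVT--Lyapunov combination completes the argument.
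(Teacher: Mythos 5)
Your overall strategy coincides with the paper's: continuity plus the intermediate value theorem for existence of a root of both $\chi$ and $\chi_{TE}$, using Proposition \ref{LimitNoisyData} to make the limits at infinity strictly negative when $\tau>1$, and monotonicity of $\chi_{TE}$ via the Lyapunov dissipation for uniqueness. Your worry about the factor of $\frac{1}{2}$ is well placed and correctly resolved: in the definition (\ref{TotalEnergy}) the paper silently switches to the convention $V(p^\delta)=\|u_{im}(p^\delta)\|^2_{0,\Omega}$, so that $\chi_{TE}$ is exactly twice the Lyapunov function $\mathcal{E}=\frac{1}{2}\|u_{im}\|^2_{0,\Omega}+\frac{1}{2}\|\dot p^\delta\|^2_P$ up to an additive constant, whence $\dot\chi_{TE}=-2\eta\|\dot p^\delta\|^2_P\le0$ (the paper writes $-\eta\|\dot p^\delta\|^2_P$, but only the sign matters). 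This is precisely your ``rescaled quantity'' workaround, so that part is fine.

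The genuine gap is in the last step of the uniqueness argument. From two roots $T_1<T_2$ and monotonicity you correctly deduce $\dot p^\delta\equiv0$, hence $\ddot p^\delta\equiv0$ and $w_{im}\chi_{\Omega_0}\equiv0$ on $[T_1,T_2]$. But $w_{im}=0$ on $\Omega_0$ does not propagate through (\ref{prow}) to $u_{im}\equiv0$ on $\Omega$: it only says $\nabla V(p^\delta(T_1))=w_{im}\chi_{\Omega_0}=0$, i.e.\ $p^\delta(T_1)$ is a critical point and hence, by convexity, a global minimizer of $V$; with noisy data the minimum of $V$ is in general strictly positive ($u_{im}(p)$ is merely orthogonal to the range of the linearized forward map, not zero). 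So your claimed contradiction with $V(p^\delta(T_1))=C_0^2\tau^2\delta^2>0$ does not follow as written. Two one-line repairs are available: (a) the paper's route --- since $\dot p^\delta(T_1)=0$ and $\nabla V(p^\delta(T_1))=0$, uniqueness for the Cauchy problem (\ref{DDBNandDDS}) forces the trajectory to be stationary for all $T\ge T_1$, so $\chi_{TE}(T)\equiv0$ there, contradicting $\lim_{T\to\infty}\chi_{TE}(T)<0$ from (\ref{TwoLimits}); or (b) observe that $V(p^\delta(T_1))=\inf_{p\in P} V(p)\le V(p^\dagger)\le C_0^2\delta^2$ by Proposition \ref{TransformErrorLevel}, which contradicts $V(p^\delta(T_1))=C_0^2\tau^2\delta^2$ since $\tau>1$. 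Either patch makes your argument complete, but the inference ``$w_{im}\equiv0$ on $\Omega_0$ implies $u_{im}\equiv0$ on $\Omega$'' is the step that fails.
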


\smallskip

\begin{proof}
The continuity of $\chi(T)$ and $\chi_{TE}(T)$ are obviously according to Lemma \ref{LemmaV} and Theorem \ref{ThmDynamic}. From Proposition \ref{LimitNoisyData} and the assumption of the lemma, we conclude that
\begin{eqnarray}\label{TwoLimits}
\lim_{T\to \infty} \chi(T) \leq C_0(1-\tau)\delta  <0  \textrm{~and~} \lim_{T\to \infty} \chi_{TE}(T) \leq C^2_0(1-\tau^2)\delta^2  <0,
\end{eqnarray}
and $\chi(0) = \|u_{im}(p_0)\|_{0,\Omega} - C_0 \tau \delta >0$ and $\chi_{TE}(0) = V(p_0) + \|\dot{p}_0\|^2_P - C^2_0 \tau^2 \delta^2 >0$, which implies the existence of the root of $\chi(T)$ and $\chi_{TE}(T)$.

The non-growing of $\chi_{TE}(T)$ is straightforward according to $\dot{\chi}_{TE} = - \eta \|\dot{p}^\delta\|^2_P$ for the fixed damping parameter and $\dot{\chi}_{TE} = - \frac{r}{T} \|\dot{p}^\delta\|^2_P$ for the dynamic damping parameter.

Finally, let us show that $\chi_{TE}(T)$ has a unique solution. We prove this by contradiction. Since $\chi_{TE}(T)$ is a non-increasing function, a number $T_0$ exists so that $\chi_{TE}(T)=0$ for $T\in [T_0,T_0+\varepsilon]$ with some positive $\varepsilon>0$. This means that $\dot{\chi}_{TE}(T)=- \eta \|\dot{p}^\delta\|^2_P\equiv0$ (or $\dot{\chi}_{TE}(T)=- \frac{r}{t} \|\dot{p}^\delta\|^2_P\equiv0$) in $(T_0,T_0+\varepsilon)$. Hence, $\ddot{p}^\delta\equiv0$ in $(T_0,T_0+\varepsilon)$. Using the equation (\ref{SecondFlow}) we conclude that for all $T>T_0$: $p^\delta(T)\equiv p^\delta(T_0)$. Since $\chi_{TE}(T_0)=0$, we obtain that $\chi_{TE}(T)\equiv0$ for $T>T_0$, which implies that $\lim \limits_{T\to \infty} \chi_{TE}(T) = 0$. This contradicts the fact in (\ref{TwoLimits}).
\end{proof}

\begin{remark}\label{Remarktau}
It should be noted that Lemma \ref{Rootdiscrepancy} may still hold in the case $\tau\leq1$. In many situations, e.g. for our numerical examples in Section 6, a small value of $\tau$ offers a better result, provided the existence of the root of $\chi$ or $\chi_{TE}$.
\end{remark}


\smallskip

\begin{theorem}\label{ThemRegu1}
(Convergence for noisy data) Let $p^\delta(x,t)$ be the dynamic solution of (\ref{DDBNandDDS})-(\ref{prou}). Then, if the terminating time point $T^*$ is selected as the root of $\chi(T)$ or $\chi_{TE}(T)$, $p^\delta(x,T^*(\delta))$ converges weakly to $p^\dagger(x)$ in $P$ as $\delta\to0$.
\end{theorem}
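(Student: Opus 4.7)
The plan is to follow the classical regularization-convergence template. From the stopping rule $\chi(T^*(\delta)) = 0$ or $\chi_{TE}(T^*(\delta)) = 0$, I extract the one-sided residual bound $V(p^\delta(\cdot,T^*(\delta));\delta) \le \frac{1}{2} C_0^2\tau^2\delta^2 \to 0$. Combining Lemma~\ref{Lemma_u} (linearity and continuous dependence of the complex BVP on the boundary data) with the noise assumption (\ref{noiseLevel1}) gives $\|u_{im}(p;\delta) - u_{im}(p;0)\|_{0,\Omega}\le C(\Omega)\delta$ uniformly in $p$, hence also $V(p^\delta(\cdot,T^*(\delta));0) \to 0$ as $\delta\to 0$. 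Once the family $\{p^\delta(\cdot,T^*(\delta))\}_{\delta>0}$ is shown to be bounded in $P$, Eberlein--\v{S}mulian produces a subsequence $\delta_k\to 0$ with $p^{\delta_k}(\cdot,T^*(\delta_k))\rightharpoonup \bar p$ weakly in $P$, and the weak lower semicontinuity of the convex quadratic functional $V(\cdot;0)$ (convex and continuous by the identity $V''(p)q^2 = \|u_{im}(q)-u_{im}(0)\|_{0,\Omega}^2$ recorded just after Lemma~\ref{LemmaV}) yields
\[
V(\bar p;0) \le \liminf_{k\to\infty}V(p^{\delta_k}(\cdot,T^*(\delta_k));0) = 0,
\]
so $\bar p$ is an exact source function in the sense of (\ref{exactp}); this weak-limit identification is the target conclusion.

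The technical heart is the uniform-in-$\delta$ boundedness of $p^\delta(\cdot,T^*(\delta))$ in $P$. For this I adapt the Lyapunov analysis of Lemmas~\ref{LemmaVanishingExact} and~\ref{LemmaVanishingExactCase2}. The key modification is that the convexity inequality (\ref{BoundedSolutionProof1}) no longer has zero on the right but only the perturbation $V(p^\dagger;\delta)\le C_0^2\delta^2$ coming from Proposition~\ref{TransformErrorLevel}. In the constant-damping case this produces the perturbed differential inequality
\[
\ddot e(t) + \eta\dot e(t) \le \|\dot p^\delta(t)\|_P^2 + 2C_0^2\delta^2
\]
with $e(t)=\frac{1}{2}\|p^\delta(t)-p^\dagger\|_P^2$, while the Lyapunov function $\mathcal{E}(t)=V(p^\delta(t);\delta)+\frac{1}{2}\|\dot p^\delta(t)\|_P^2$ remains non-increasing and bounds $\|\dot p^\delta(t)\|_P$ uniformly in $t$ and $\delta$ (since $\mathcal{E}(0)$ is bounded in $\delta$ by Lemma~\ref{LemmaV}). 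Integrating against the factor $e^{\eta s}$ and invoking the discrepancy stopping to control $\delta^2 T^*(\delta)$ then yields a uniform bound on $e(T^*(\delta))$. The $\eta(t)=r/t$ case is handled along the same lines via the anchored energy $\mathcal{E}_2$, whose derivative (\ref{anchoredEnergy2}) picks up an additional $O(\delta^2 t)$ perturbation that must be absorbed by the stopping.

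The main obstacle will be closing the loop between the uniform bound and the stopping time: the noisy convexity estimate produces a term of size $C_0^2\delta^2 T^*(\delta)$ which is benign only if $\delta^2 T^*(\delta)\to 0$, and this property must itself be extracted from the identity $V(p^\delta(T^*);\delta)=\frac{1}{2} C_0^2\tau^2\delta^2$ together with a lower bound on $V$ along the trajectory before stopping. For the total-energy variant this is cleaner, since $\chi_{TE}(T^*(\delta))=0$ directly delivers both $V\to 0$ and $\|\dot p^\delta(\cdot,T^*(\delta))\|_P\to 0$; for Morozov one must additionally control $\|\dot p^\delta(\cdot,T^*(\delta))\|_P$ from the dissipation identity $\dot{\mathcal{E}}(t)=-\eta\|\dot p^\delta(t)\|_P^2$. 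As a last remark, the statement ``converges weakly to $p^\dagger$'' should be read in the subsequential sense: the Opial framework of Theorems~\ref{ConvergenceExact} and~\ref{NesterovWeak} only yields a weak limit in the set of exact source functions defined by (\ref{exactp}), not uniqueness of that limit.
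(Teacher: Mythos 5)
Your route is genuinely different from the paper's. The paper follows a stability-plus-approximation argument in the style of Scherzer: it distinguishes the two cases where the stopping times $T^*_n=T^*(\delta_n)$ have a finite accumulation point or tend to infinity, and in the second case estimates $( p^{\delta_n}(\cdot,T^*_n)-p^\dagger,q)_P$ by inserting the intermediate points $p^{\delta_n}(\cdot,T^*)$ and $p(\cdot,T^*)$, controlling the three pieces by continuity in $t$, continuous dependence on the data at fixed $T^*$ (Theorem \ref{ThmDynamic}), and the noise-free weak convergence $p(\cdot,t)\rightharpoonup p^\dagger$ from Theorems \ref{ConvergenceExact} and \ref{NesterovWeak}. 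Your variational template (residual tends to zero, boundedness, weak compactness, weak lower semicontinuity of $V(\cdot\,;0)$) is a legitimate alternative, but as written it has two genuine shortfalls. First, it only shows that every weak cluster point of $\{p^{\delta}(\cdot,T^*(\delta))\}$ lies in the solution set of (\ref{exactp}); it neither identifies the limit nor excludes different subsequences converging to different solutions of the (non-uniquely solvable) inverse problem. The paper's splitting pins the limit down as the Opial limit of the noise-free trajectory and therefore yields convergence of the whole family to that specific $p^\dagger$. Your closing remark concedes this, but the concession changes the statement being proved rather than proving it.

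Second, the boundedness step is left open, and the mechanism you propose for closing it, namely extracting $\delta^2 T^*(\delta)\to 0$ from the stopping identity, is not available: discrepancy-type stopping rules in general only give a bound of the form $T^*(\delta)\leq C\delta^{-2}$, not $o(\delta^{-2})$. The correct observation is that you do not need this. Before the stopping time the Morozov rule gives $\|u_{im}(p^\delta(t))\|_{0,\Omega}> C_0\tau\delta\geq \tau\,\|u_{im}(p^\dagger)\|_{0,\Omega}$ by Proposition \ref{TransformErrorLevel}, hence $V(p^\delta(t);\delta)\geq \tau^2 V(p^\dagger;\delta)$, so for $\tau>1$ the convexity term $( p^\delta(t)-p^\dagger,\nabla V(p^\delta(t)))_P\geq V(p^\delta(t);\delta)-V(p^\dagger;\delta)$ is nonnegative on $[0,T^*]$ and (\ref{EqError2Sec3}) yields the unperturbed inequality $\ddot{e}(t)+\eta\dot{e}(t)\leq \|\dot{p}^\delta(t)\|^2_P$ there; two integrations against $e^{\eta s}$, together with $\int_0^\infty\|\dot{p}^\delta(t)\|^2_P\,dt\leq \mathcal{E}(0)/\eta$ and the $\delta$-uniform bound on $\mathcal{E}(0)$ from Lemma \ref{LemmaV}, give $e(T^*(\delta))\leq C$ with no $\delta^2T^*$ term at all (an analogous argument with the anchored energy handles $\eta=r/t$). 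With that repair your boundedness claim holds, but the identification-of-the-limit issue remains, and that is exactly the part of the theorem where the paper's stability argument does work that your compactness argument does not replace.
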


\begin{proof}
We use the technique from \cite[Theorem 2.4]{Scherzer-1995}. Let $\{\delta_n\}$ be a sequence converging to 0 as $n\to\infty$, and let $\{ g^{\delta_n}_1, g^{\delta_n}_2\}$ be a corresponding sequence of noisy data with $\|g^{\delta_n}_1 - g_1\|_{0,\Gamma}\leq \delta_n$ and $\|g^{\delta_n}_2 - g_2\|_{0,\Gamma}\leq \delta_n$. For a triple $(\delta_n,g^{\delta_n}_1,g^{\delta_n}_2)$, denote by $T^*_n = T^*(\delta_n)$ the corresponding terminating time point determined from the generalized discrepancy principles $\chi(T)=0$ or $\chi_{TE}(T)=0$.

Two possible cases exist. (i) $T^*_n$ has a finite accumulation point $T^*$. (ii) $T^*_n\to\infty$ as $\delta_n\to0$. For the case (i), without loss of generality we can assume that $T^*_n=T^*$ for all $n\in \mathbf{N}$. Hence, from the definition of $T^*_n$ it follows that
\begin{eqnarray}
\label{ThmProofIneq1}
\|u_{im}(p^{\delta_n}(\cdot,T^*_n))\|_{0,\Omega} \leq C_0\tau \delta_n.
\end{eqnarray}
Since $p^{\delta_n}(\cdot,T^*_n)$ depends continuously on $\{ g^{\delta_n}_1, g^{\delta_n}_2\}$ when $T^*_n$ is fixed, we have
\begin{eqnarray}
\label{ThmProofLimit1}
\quad p^{\delta_n}(\cdot,T^*_n) \to p(\cdot,T^*), \quad \|u_{im}(p^{\delta_n}(\cdot,T^*_n))\|_{0,\Omega} \to \|u_{im}(p(\cdot,T^*))\|_{0,\Omega}. \quad n\to \infty,
\end{eqnarray}
where $p(\cdot,t)$ denotes the dynamic solution of (\ref{DDBNandDDS})-(\ref{prou}) with noise-free data. Letting $n\to\infty$ in (\ref{ThmProofIneq1}) yields $\|u_{im}(p(\cdot,T^*))\|_{0,\Omega} = 0$. Thus, $p(x,T^*)=p^\dagger(x)$, a solution of (\ref{BVP}), and with (\ref{ThmProofLimit1}) we obtain the strong convergence: $p(\cdot,T^*_n)\to p^\dagger(\cdot)$ in $P$ as $n\to\infty$.

Now, consider the case (ii). According to the continuity of $p^{\delta_n}(\cdot,t)$, for any positive $\varepsilon_0$ and $T^*_n$, there exists a point $T^*<T^*_n$ such that
\begin{eqnarray}
\label{ThmProofT}
\| p^{\delta_n}(\cdot,T^*_n) - p^{\delta_n}(\cdot,T^*) \|_P \leq \varepsilon_0.
\end{eqnarray}
On the other hand, for any $q(\cdot) \in P$,
\begin{eqnarray*}
&& | ( p^{\delta_n}(\cdot,T^*_n) - p^\dagger(\cdot) , q(\cdot) )_P |  \leq  \\ && \quad
| ( p^{\delta_n}(\cdot,T^*_n) - p^{\delta_n}(\cdot,T^*) , q(\cdot) )_P | + | ( p^{\delta_n}(\cdot,T^*) - p(\cdot,T^*) , q(\cdot) )_P | + | p(\cdot,T^*) - p^\dagger(\cdot), q(\cdot) )_P |.
\end{eqnarray*}
By inequality (\ref{ThmProofT}) and the weak convergence of $p(\cdot,t)$, one can fix $T^*$ so large that both inequalities $| ( p^{\delta_n}(\cdot,T^*_n) - p^{\delta_n}(\cdot,T^*) , q(\cdot) )_P | \leq \varepsilon/3$ and $| p(\cdot,T^*) - p^\dagger(\cdot), q(\cdot) )_P | \leq \varepsilon/3$ hold. Now that $T^*$ is fixed, we can apply the result of case (i) to conclude that a positive number $n_1=n_1(T^*)$ exists such that for any $n\geq n_1$: $| ( p^{\delta_n}(\cdot,T^*) - p(\cdot,T^*) , q(\cdot) )_P | \leq \varepsilon/3$. Combine the above inequalities to obtain $| ( p^{\delta_n}(\cdot,T^*_n) - p^\dagger(\cdot) , q(\cdot) )_P | \leq \varepsilon$ for all $n\geq n_1$. Since $\varepsilon$ is arbitrary, we complete the proof.
\end{proof}


\section{Full discretization and a novel iterative regularization algorithm}

\subsection{Space discretization}
 Following~\cite{Johnson-2009}, we discretize the bounded domain $\Omega$ by
mesh $\mathcal{T}$ using non-overlapping triangles/tetrahedrons $\{\bigtriangleup_\mu\}^M_{\mu=1}$. We associate the mesh $\mathcal{T}$ with
the mesh function $h(x)$, which is a piecewise-constant function such that
$h(x)\equiv \ell(\bigtriangleup_\mu)$ for all $x\in\bigtriangleup_\mu$,
where $\ell(\bigtriangleup_\mu)$ is the longest side of $\bigtriangleup_\mu\in \mathcal{T}$. Define the mesh scale size as $h:=\max_{x\in \Omega} h(x)$.
Let $r(\bigtriangleup_\mu)$ be the radius of the maximal circle/ball contained in the triangle/tetrahedron $\bigtriangleup_\mu$. We make the following shape regularity assumption for every element
$\bigtriangleup_\mu\in \mathcal{T}$: $c_1\leq \ell(\bigtriangleup_\mu) \leq c_2 r(\bigtriangleup_\mu)$,
where $c_1$ and $c_2$ are two positive constants. Now, we introduce the finite element space
\begin{equation}\label{ElementSpace}
\Psi^h = \left\{v\in C(\Omega):~ v\in \mathcal{P}_1(\bigtriangleup_\mu)  \textrm{~for all~}
\bigtriangleup_\mu\in \mathcal{T} \right\},
\end{equation}
where $\mathcal{P}_1(\bigtriangleup_\mu)$ denotes the set of all linear continuous
functions on $\bigtriangleup_\mu$.

Denote $\mathbf{\Psi}^h:= \Psi^h \oplus i\Psi^h$. Then, $\mathbf{\Psi}^h$ is a finite element
subspace of $\mathbf{H}^1(\Omega)$, and the finite element approximation of the BVP (\ref{weakBVP}) is as follows:
\begin{eqnarray}
\textrm{~find~} u^h\in \mathbf{\Psi}^h \textrm{~such that~} a(u^h,\psi^h)
=f^\delta(\psi^h), \quad \forall \psi^h\in \mathbf{\Psi}^h.
\label{discreteBVP}
\end{eqnarray}

The problem (\ref{discreteBVP}) admits a unique solution $u^h\in \mathbf{\Psi}^h$
according to Lemma~\ref{Lemma_u}. Similar to those in~\cite{Cheng:2014}, it is not difficult to derive the following a priori finite element error estimates.

\begin{theorem}\label{FiniteElement_u}
Let $u\in \mathbf{H}^1(\Omega)$ be the solution of the problem
(\ref{weakBVP}) and $u^h\in \mathbf{\Psi}^h$ be the finite element solution of problem (\ref{discreteBVP})
respectively. Then, for any $p(\cdot,t)\in L^2((t_0,\infty),P)$
and almost every $t>0$
\begin{eqnarray*}
|\|u^h(p(\cdot,t))-u(p(\cdot,t))\||_{1,\Omega} \leq C(\Omega) h \left( \|p(\cdot,t)\|_{0,\Omega_0}
+ \|g^\delta_1\|_{0,\Gamma} +  \|g^\delta_2\|_{0,\Gamma} \right).
\end{eqnarray*}
\end{theorem}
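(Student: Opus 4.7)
The plan is to follow the classical Céa's lemma plus interpolation paradigm adapted to the complex Robin sesquilinear form $a(\cdot,\cdot)$. First, by subtracting the discrete problem (\ref{discreteBVP}) from the continuous weak formulation (\ref{weakBVP}) restricted to test functions $\psi^h \in \mathbf{\Psi}^h \subset \mathbf{H}^1(\Omega)$, I obtain the Galerkin orthogonality $a(u - u^h, \psi^h) = 0$ for all $\psi^h\in\mathbf{\Psi}^h$. Next, I verify the two ingredients needed to turn this into a quasi-optimality statement: continuity of $a$, which follows from Cauchy-Schwarz combined with the trace inequality $\|v\|_{0,\Gamma} \leq C(\Omega) |\|v\||_{1,\Omega}$, and coercivity, which here is sharp because $a(v,v) = |\|v\||_{1,\Omega}^2 + i \int_\Gamma |v|^2 ds$, so $|a(v,v)| \geq |\|v\||_{1,\Omega}^2$.

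Combining Galerkin orthogonality with continuity and coercivity in the usual manner yields Céa's lemma in the form
\begin{equation*}
|\|u - u^h\||_{1,\Omega} \leq C(\Omega) \inf_{\psi^h \in \mathbf{\Psi}^h} |\|u - \psi^h\||_{1,\Omega}.
\end{equation*}
I would then specialize to $\psi^h = \Pi^h u$, the (complex) Lagrange interpolant of $u$ onto $\mathbf{\Psi}^h$. Under the shape-regularity assumption $c_1 \leq \ell(\bigtriangleup_\mu) \leq c_2 r(\bigtriangleup_\mu)$, the standard piecewise-linear interpolation estimate on quasi-uniform meshes (see e.g.\ the reference to Johnson already cited) gives $|\|u - \Pi^h u\||_{1,\Omega} \leq C(\Omega)\, h\, |\|u\||_{2,\Omega}$, which reduces the task to bounding $|\|u\||_{2,\Omega}$ by the data norms.

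For the $H^2$-regularity step I would invoke elliptic regularity for the Robin problem (\ref{weakBVP}). Because $\Gamma$ is smooth and the coefficients in (\ref{prou}) are constant, separating the real and imaginary parts of $u$ (as already done in the proof of Proposition \ref{TransformErrorLevel}) one obtains a coupled pair of scalar Robin problems with $L^2$-data on $\Omega$ and $L^2$-data on $\Gamma$; a shift argument then yields $|\|u\||_{2,\Omega} \leq C(\Omega)\big(\|p(\cdot,t)\|_{0,\Omega_0} + \|g_1^\delta\|_{0,\Gamma} + \|g_2^\delta\|_{0,\Gamma}\big)$, after absorbing lower-order terms via the already established $H^1$-bound (\ref{inequality1}). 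Substituting this regularity estimate into the interpolation bound completes the argument for a.e.\ $t>0$, while the $L^2$-in-time assumption $p(\cdot,t)\in L^2((t_0,\infty),P)$ is precisely what makes the right-hand side finite a.e.

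The main obstacle I expect is the $H^2$-regularity step: while coercivity, continuity and interpolation are entirely standard, obtaining the shift theorem with only $L^2(\Gamma)$ boundary data (rather than $H^{1/2}(\Gamma)$) requires either invoking a refined regularity result for the Robin problem with smooth boundary or treating the boundary contributions as right-hand sides via a lifting argument. Everything else, including the continuity in the data $g_1^\delta,g_2^\delta$, is then inherited from Lemma \ref{Lemma_u}.
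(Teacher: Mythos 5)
The paper itself does not prove this theorem; it only remarks that the estimate follows ``similar to those in'' the cited CCBM reference, so the comparison here is against the standard argument that reference (and your proposal) both rely on: Galerkin orthogonality, coercivity of the Robin sesquilinear form via $\mathrm{Re}\,a(v,v)=|\|v\||_{1,\Omega}^2$, continuity via the trace inequality, C\'ea's lemma, piecewise-linear interpolation, and an elliptic shift estimate. Your outline of these steps is correct and is essentially the intended route.

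The one genuine gap is precisely the point you flag at the end but do not resolve: the $H^2$ shift. Under the paper's standing assumptions the noisy Robin datum $g^\delta_2+i g^\delta_1$ is only in $L^\infty(\Gamma)\subset L^2(\Gamma)$, and for the Robin problem with boundary data merely in $L^2(\Gamma)$ the solution is generically only in $H^{3/2}(\Omega)$, not $H^2(\Omega)$; the shift bound $|\|u\||_{2,\Omega}\leq C\bigl(\|p\|_{0,\Omega_0}+\|g^\delta_1\|_{0,\Gamma}+\|g^\delta_2\|_{0,\Gamma}\bigr)$ that your interpolation step needs requires $g^\delta_j\in H^{1/2}(\Gamma)$, with the $H^{1/2}(\Gamma)$ norms then appearing on the right. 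Splitting $u=u_p+u_g$ by linearity isolates the difficulty cleanly: the interior-source part $u_p$ is in $H^2$ with the claimed bound and gives the $O(h)\,\|p\|_{0,\Omega_0}$ contribution, but C\'ea plus interpolation applied to the boundary-data part $u_g$ yields only $O(h^{1/2})\,\|g^\delta\|_{0,\Gamma}$. So as written your argument proves a weaker rate for the boundary-data contribution; to obtain the stated estimate you must either add the hypothesis $g^\delta_j\in H^{1/2}(\Gamma)$ (and replace the $L^2(\Gamma)$ norms accordingly) or supply the ``refined regularity result or lifting argument'' you allude to, neither of which is standard with $L^2(\Gamma)$ data alone. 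Everything upstream of that step (orthogonality, coercivity, continuity, C\'ea, the interpolation estimate under shape regularity) is sound.
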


Note that, in this section, we set $t_0=0$ or 1, corresponding to the model (\ref{DDBNandDDS}) with different damping parameter $\eta(t)=const.$ or $r/t$. Now we are in a position to discretize the second order evolution equation (\ref{DDBNandDDS}). For this purpose, set $P^h=P\cap \Psi^h$ and the orthogonal projection operator $\Pi^h: P\to P^h$
\begin{equation}
(\Pi^h p,q^h)_{k,\Omega_0} = (p,q^h)_{k,\Omega_0}, \quad \forall p\in P, q^h\in P^h,~ k=0, 1.
\label{projectionEquation0}
\end{equation}
Then for all $p\in H^{k+1}(\Omega_0)$ \cite[Theorem 10.3.8]{Atkinson:2009}:
\begin{equation}
\|\Pi^h p - p\|_{m,\Omega_0}\leq C(\Omega) h^{k+1-m} |p|_{k+1,\Omega_0}, \quad m=0,1.
\label{projectionInequation}
\end{equation}

Introduce a discrete optimization problem
\begin{equation}
\min_{p\in P^h} V_h (p) = \min_{p\in P^h} \frac{1}{2} \|u^h_{im}(p)\|^2_{0,\Omega},
\label{problemFinalDiscretized}
\end{equation}
where $u^h=u^h_{re}+i u^h_{im} \in \mathbf{\Psi}^h$ is the weak solution of the problem
(\ref{discreteBVP}), and a semi-discretized second order flow
\begin{equation}\label{DDBNandDDS_semiDiscretized}
\left\{\begin{array}{ll}
\ddot{p}^{\delta,h} (x,t) + \eta(t) \dot{p}^{\delta,h} (x,t) + w^h_{im}(x,t) =0, & x\in \Omega_0,~t\in(t_0,\infty), \\
p^{\delta,h} (x,t_0)=p^h_{0}, \dot{p}^{\delta,h} (x,t_0)=\dot{p}^h_{0}, & x\in \Gamma,~t\in(t_0,\infty),
\end{array}\right.
\end{equation}
where $p^h_{0}$ and $\dot{p}^h_{0}$ are projections of $p_{0}$ and $\dot{p}_{0}$ in $P^h$,
$w^h$ is the finite element solution to the joint problem
\begin{equation}\label{Discrete_w}
\left\{\begin{array}{ll}
-\triangle w(x,t) + c w(x,t)  = u^h_{im}(p^{\delta,h}(x,t)), & x\in \Omega_0,~t\in(t_0,\infty), \\
\frac{\partial w(x,t)}{\partial \mathbf{n}} + i w(x,t) = 0, & x\in \Gamma,~t\in(t_0,\infty),
\end{array}\right.
\end{equation}
and $u^h_{im}(p^{\delta,h}(x,t))$ is the imaginary part of the solution of (\ref{discreteBVP}), with $p^\delta$ replaced by $p^{\delta,h}$.

\begin{proposition}\label{wh}
Let $w^\delta\in \mathbf{H}^1(\Omega)$ be the weak solution of
(\ref{weakBVP}) with $p^\delta(\cdot,t)$ replaced by $p^{\delta,h}(\cdot,t)$, and $w^{\delta,h}\in \mathbf{\Psi}^h$ be the finite element solution of (\ref{Discrete_w}).
Then, a constant $C(\Omega)$ exists such that for any $p^{\delta,h}(\cdot,t) \in L^2((t_0,\infty),P^h)$, and almost every $t\in [t_0,\infty)$,
\begin{eqnarray*}
|\|w^{\delta,h}(p^{\delta,h}(\cdot,t))-w^\delta(p^{\delta,h}(\cdot,t))\||_{1,\Omega} \leq C(\Omega) h \left( \|p^{\delta,h}(\cdot,t)\|_{0,\Omega_0}
+ \|g^\delta_1\|_{0,\Gamma} +  \|g^\delta_2\|_{0,\Gamma} \right).
\label{inequality3_1}
\end{eqnarray*}
\end{proposition}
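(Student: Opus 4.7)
The plan is to reduce the estimate to two already-established ingredients: (a) Cauchy-type coercivity of the complex sesquilinear form $a(\cdot,\cdot)$ used to define both the continuous and discrete adjoint problems, and (b) the primal finite-element error estimate from Theorem~\ref{FiniteElement_u}. To do so, I would insert an intermediate finite-element solution. Specifically, define $\tilde{w}^{\delta,h}\in \mathbf{\Psi}^h$ to be the Galerkin approximation of the \emph{continuous-data} adjoint problem, i.e.\ the unique element of $\mathbf{\Psi}^h$ such that $a(\tilde{w}^{\delta,h},\psi^h)=(u_{im}(p^{\delta,h}),\psi^h)_{0,\Omega}$ for every $\psi^h\in\mathbf{\Psi}^h$. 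Then split
\begin{eqnarray*}
|\|w^{\delta,h}-w^\delta\||_{1,\Omega}\le |\|w^{\delta,h}-\tilde{w}^{\delta,h}\||_{1,\Omega}+|\|\tilde{w}^{\delta,h}-w^\delta\||_{1,\Omega},
\end{eqnarray*}
and bound the two pieces separately.

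For the second term, $\tilde{w}^{\delta,h}$ is just the standard Galerkin projection of $w^\delta$ in $\mathbf{\Psi}^h$ with respect to the form $a$. Cea's lemma, together with the piecewise-linear interpolation estimate of~\cite[Thm.~10.3.8]{Atkinson:2009} and the elliptic regularity lifting $w^\delta\in \mathbf{H}^2(\Omega)$ (the adjoint problem has smooth boundary, no source in $\Omega_0$, and $L^2$ right-hand side $u_{im}(p^{\delta,h})$), yields $|\|\tilde{w}^{\delta,h}-w^\delta\||_{1,\Omega}\le C(\Omega)\,h\,\|u_{im}(p^{\delta,h})\|_{0,\Omega}$. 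Then Lemma~\ref{Lemma_u} applied to $p^{\delta,h}$ in place of $p^\delta$ gives $\|u_{im}(p^{\delta,h})\|_{0,\Omega}\le C(\Omega)(\|p^{\delta,h}\|_{0,\Omega_0}+\|g_1^\delta\|_{0,\Gamma}+\|g_2^\delta\|_{0,\Gamma})$, which is exactly the structure of the bound we want.

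For the first term, both $w^{\delta,h}$ and $\tilde{w}^{\delta,h}$ live in $\mathbf{\Psi}^h$ and satisfy the same variational identity except that the right-hand sides are $u^h_{im}(p^{\delta,h})$ and $u_{im}(p^{\delta,h})$ respectively. Subtracting and testing with $\psi^h=w^{\delta,h}-\tilde{w}^{\delta,h}$, the coercivity of $a$ (the same estimate used in Lemma~\ref{Lemma_u}) gives $|\|w^{\delta,h}-\tilde{w}^{\delta,h}\||_{1,\Omega}\le C(\Omega)\,\|u^h_{im}(p^{\delta,h})-u_{im}(p^{\delta,h})\|_{0,\Omega}$. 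Theorem~\ref{FiniteElement_u}, applied with $p(\cdot,t)$ replaced by $p^{\delta,h}(\cdot,t)$, bounds the right-hand side by $C(\Omega)\,h\,(\|p^{\delta,h}\|_{0,\Omega_0}+\|g_1^\delta\|_{0,\Gamma}+\|g_2^\delta\|_{0,\Gamma})$. Adding the two contributions and absorbing constants gives the claimed inequality for a.e.\ $t\in[t_0,\infty)$.

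The step I expect to be the only nontrivial one is the $H^2$-regularity and Cea-type control used for the second piece: the bilinear form $a$ has a boundary term $i\int_\Gamma u\bar\psi\,ds$ that prevents a Lax--Milgram estimate with real coercivity constant, and one must invoke the $\inf$-$\sup$ stability already proved in~\cite{Cheng:2014} (which underlies Lemma~\ref{Lemma_u}) in order to obtain the quasi-optimality $|\|\tilde{w}^{\delta,h}-w^\delta\||_{1,\Omega}\le C\inf_{v^h}|\|v^h-w^\delta\||_{1,\Omega}$ uniformly in $h$. Once that stability is in hand, the remainder is routine and the estimate follows; the dependence on $t$ enters only through $p^{\delta,h}(\cdot,t)$ and the argument is pointwise in $t$, so integrability in $L^2((t_0,\infty),P^h)$ is preserved without further work.
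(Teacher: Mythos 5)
The paper states Proposition~\ref{wh} without proof (it is left implicit, in the spirit of the remark preceding Theorem~\ref{FiniteElement_u} that such estimates follow ``similar to those in \cite{Cheng:2014}''), so there is no in-paper argument to compare against; your proof supplies exactly the standard argument the paper is implicitly relying on, and it is correct. The decomposition through the auxiliary Galerkin solution $\tilde{w}^{\delta,h}$ cleanly separates the two error sources (discretization of the adjoint equation itself, and the perturbation of its right-hand side from $u_{im}(p^{\delta,h})$ to $u^h_{im}(p^{\delta,h})$), and both pieces are controlled by ingredients already available: elliptic regularity plus interpolation and Lemma~\ref{Lemma_u} for the first, discrete stability plus Theorem~\ref{FiniteElement_u} for the second. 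One small correction: your stated worry that the boundary term $i\int_\Gamma u\bar\psi\,ds$ ``prevents a Lax--Milgram estimate with real coercivity constant'' is unfounded. Since $\mathrm{Re}\,a(v,v)=|\|v\||^2_{1,\Omega}$ for every $v\in\mathbf{H}^1(\Omega)$, the sesquilinear form is coercive in the complex Lax--Milgram sense with constant $1$, so C\'ea's lemma applies directly and uniformly in $h$; the detour through an $\inf$-$\sup$ argument is harmless but unnecessary. The same observation simplifies your bound for the first piece: taking real parts after testing with $w^{\delta,h}-\tilde{w}^{\delta,h}$ immediately yields $|\|w^{\delta,h}-\tilde{w}^{\delta,h}\||_{1,\Omega}\le \|u^h_{im}-u_{im}\|_{0,\Omega}$. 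The $\mathbf{H}^2$-regularity you invoke is justified by the standing assumption that $\Gamma$ is smooth, and your closing remark that the whole argument is pointwise in $t$ is the right way to handle the ``almost every $t$'' qualifier.
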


Combining Theorems \ref{ThmDynamic} and \ref{FiniteElement_u}, Proposition \ref{wh}, as well as the definition of $\Pi^h$, it is not difficult to obtain the following estimate.
\begin{proposition}\label{Convph}
Let $p^{\delta}(\cdot,t)\in P$ and $p^{\delta,h}(\cdot,t)\in P^h$ be solutions of
(\ref{DDBNandDDS}) and (\ref{DDBNandDDS_semiDiscretized}) respectively.
Then, a constant $C(\Omega)$ exists such that for almost every $t\in [t_0,\infty)$,
\begin{eqnarray*}
\|p^{\delta,h}(\cdot,t)-p^{\delta}(\cdot,t)\|_P \leq C(\Omega) h \left( \|g^\delta_1\|_{0,\Gamma} +  \|g^\delta_2\|_{0,\Gamma} \right).
\end{eqnarray*}
\end{proposition}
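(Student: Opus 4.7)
The plan is to subtract the semi-discrete flow (\ref{DDBNandDDS_semiDiscretized}) from the continuous flow (\ref{DDBNandDDS}) and control the resulting linear second-order error equation in $P$ by combining the FEM estimate of Proposition \ref{wh} with the affine structure of the map $p \mapsto w(p)$ and a Lyapunov argument modelled on Lemma \ref{LemmaVanishingExact}.

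Setting $\xi(\cdot,t) := p^{\delta,h}(\cdot,t) - p^{\delta}(\cdot,t)$ and subtracting the two flows yields
\[
\ddot\xi(t) + \eta(t)\dot\xi(t) = w_{im}(p^{\delta}(t)) - w^{h}_{im}(p^{\delta,h}(t)),
\]
with initial data $\xi(t_0) = p^h_0 - p_0$ and $\dot\xi(t_0) = \dot p^h_0 - \dot p_0$, both of order $h$ by (\ref{projectionInequation}). I would split the right-hand side as
\[
w_{im}(p^{\delta}) - w^{h}_{im}(p^{\delta,h}) = \bigl[w_{im}(p^{\delta,h}) - w^{h}_{im}(p^{\delta,h})\bigr] + \bigl[w_{im}(p^{\delta}) - w_{im}(p^{\delta,h})\bigr].
\]
The first bracket is the finite-element error at a fixed source and is bounded by Proposition \ref{wh} by $C(\Omega)h(\|p^{\delta,h}(t)\|_P + \|g^{\delta}_1\|_{0,\Gamma} + \|g^{\delta}_2\|_{0,\Gamma})$. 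Because the boundary data enter (\ref{prou}) only as a fixed affine contribution, the composed map $p \mapsto w_{im}(p)$ is affine in $p$, so the second bracket equals $-K\xi$ for a bounded linear $K:P\to P$; its norm is $O(1)$ by Lemma \ref{Lemma_u} applied with zero boundary data, and moreover $(K\xi,\xi)_P = \|u^{0}_{im}(\xi)\|^{2}_{0,\Omega}\ge 0$, where $u^{0}(\xi)$ solves (\ref{prou}) with source $\xi$ and zero boundary data. The reduced error equation is therefore
\[
\ddot\xi + \eta(t)\dot\xi + K\xi = R(t),\qquad \|R(t)\|_P \leq C(\Omega)h\bigl(\|p^{\delta,h}(t)\|_P + \|g^{\delta}_1\|_{0,\Gamma} + \|g^{\delta}_2\|_{0,\Gamma}\bigr).
\]

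To close the estimate I would first show that $\|p^{\delta,h}(t)\|_P$ is uniformly bounded in $t$: the Lyapunov analysis of Lemma \ref{LemmaVanishingExact} (Case I) or Lemma \ref{LemmaVanishingExactCase2} (Case II) transfers verbatim to the semi-discrete system (\ref{DDBNandDDS_semiDiscretized})--(\ref{Discrete_w}), since $V_h$ is convex and the damping is unchanged. This upgrades $R$ to the uniform bound $\|R(t)\|_P \leq C(\Omega)h(\|g^{\delta}_1\|_{0,\Gamma} + \|g^{\delta}_2\|_{0,\Gamma})$. Introducing the Lyapunov functional $\mathcal{E}_\xi(t) := \frac{1}{2}\|\dot\xi(t)\|_P^2 + \frac{1}{2}(K\xi(t),\xi(t))_P$ and testing the error equation against $\dot\xi$ yields $\dot{\mathcal{E}}_\xi(t) + \eta(t)\|\dot\xi(t)\|_P^2 = (R(t),\dot\xi(t))_P$; Cauchy--Schwarz, absorbing the $\|\dot\xi\|_P^2$ term into the damping, and integrating, together with the $O(h^2)$ initial energy from (\ref{projectionInequation}), produces $\|\xi(t)\|_P \leq C(\Omega)h(\|g^{\delta}_1\|_{0,\Gamma} + \|g^{\delta}_2\|_{0,\Gamma})$ uniformly in $t$, which is the asserted bound.

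The main obstacle is precisely the uniform-in-$t$ requirement: since $K$ is bounded but not coercive in $P$, a direct Gronwall estimate produces an $e^{Ct}$ factor. What saves the day is the non-negativity of the quadratic form $(K\xi,\xi)_P$, so $\mathcal{E}_\xi$ is a genuine Lyapunov functional for the error equation, mirroring the structure already exploited in Section 3. A secondary subtlety appears in Case II with $\eta(t)=r/t$, where the vanishing damping is too weak to absorb $\|\dot\xi\|_P^2$ directly; there one should switch to the anchored-energy scaling of (\ref{anchoredEnergy}) and use the integrability of $t\|\dot p^{\delta,h}(t)\|_P^2$ provided by the discrete analogue of Lemma \ref{LemmaVanishingExactCase2} to close the argument.
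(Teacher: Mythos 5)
Your setup --- subtracting the two flows, splitting the right-hand side into the FEM consistency error at a fixed source (controlled by Proposition \ref{wh}) and the affine difference $-K\xi$ with $K=V''$ self-adjoint and nonnegative --- is a reasonable reconstruction of what the paper intends; note that the paper itself only asserts that the estimate follows ``by combining'' Theorems \ref{ThmDynamic} and \ref{FiniteElement_u} with Proposition \ref{wh} and the definition of $\Pi^h$, and supplies no details. The problem is your final step. The energy $\mathcal{E}_\xi(t)=\frac{1}{2}\|\dot\xi(t)\|_P^2+\frac{1}{2}(K\xi(t),\xi(t))_P$ controls $\|\dot\xi\|_P$ and the seminorm $(K\xi,\xi)_P^{1/2}=\|u^0_{im}(\xi)\|_{0,\Omega}$, but since $K$ is compact (no spectral gap) and may have a nontrivial kernel, $(K\xi,\xi)_P$ does not bound $\|\xi\|_P^2$ from below; so even a uniform bound $\mathcal{E}_\xi(t)\le Ch^2$ would not yield $\|\xi(t)\|_P\le Ch$, only $\|\xi(t)\|_P\le\|\xi(t_0)\|_P+\int_{t_0}^t\|\dot\xi\|_P\,ds$, which grows in $t$. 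Worse, the forcing $R(t)$ is persistent --- it is bounded by $C h(\cdot)$ but does not decay and is not square-integrable on $[t_0,\infty)$ --- so the standard absorption $(R,\dot\xi)_P\le\frac{\eta}{2}\|\dot\xi\|_P^2+\frac{1}{2\eta}\|R\|_P^2$ followed by integration gives $\mathcal{E}_\xi(t)\le\mathcal{E}_\xi(t_0)+Ch^2(t-t_0)$, which is not uniform in $t$ either. Concretely, on $\ker K$ the error equation degenerates to $\ddot\xi+\eta\dot\xi=R$ with an $O(h)$ persistent forcing, whose solution grows like $\|R\|_P\,t/\eta$, and on eigenspaces with eigenvalue $\lambda_j\to0$ the steady state is of size $\|R\|_P/\lambda_j$, unbounded relative to $h$. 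The nonnegativity of $(K\xi,\xi)_P$ prevents exponential blow-up but does not, as you claim, ``save the day'' for a $t$-uniform $O(h)$ bound.

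To close the argument one needs an additional ingredient --- for instance restricting $t$ to a bounded interval $[t_0,T^*(\delta)]$ (in which case the constant must be allowed to depend on $T^*$), a source condition placing the consistency error in a suitable range of $K$, or an inverse-stability property of $K$ on $P^h$ --- none of which you supply. The same criticism in fact applies to the proposition as stated in the paper: a genuinely $t$-uniform bound of order $h$ does not follow from the cited results by the energy method you propose, and the honest conclusion of your argument is a bound that degrades with $t$, not the asserted uniform one. Your closing remark about Case II ($\eta=r/t$) correctly identifies a further difficulty with the vanishing damping, but likewise leaves it unresolved.
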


Now, we present the main result in this subsection.
\begin{theorem}\label{ConvFE}
(convergence of the finite element solution) Let $p^{\delta,h}\in P^h$ be solution of (\ref{DDBNandDDS_semiDiscretized}).
Suppose that for almost every $t>0$ and $\delta\geq0$, $p^{\delta}(\cdot,t)\in H^1(\Omega_0)$. Then, under the assumption of Theorem \ref{ThemRegu1}, we have the strong convergence, i.e., $p^{\delta,h}(\cdot,T^*(\delta))\to p^{\dagger}(\cdot)$ in $L^2(\Omega_0)$ as $\delta,h\to0$.
\end{theorem}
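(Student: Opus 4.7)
The plan is to split the $L^2$-error into four pieces via the triangle inequality and bound each one with an already-available result: the finite-element estimate (Proposition \ref{Convph}), the projection estimate (\ref{projectionInequation}), and the weak convergence of the continuous regularized solution (Theorem \ref{ThemRegu1}). Concretely, I would write
\begin{eqnarray*}
\|p^{\delta,h}(\cdot,T^*)-p^{\dagger}\|_{0,\Omega_0} &\leq& \underbrace{\|p^{\delta,h}(\cdot,T^*)-p^{\delta}(\cdot,T^*)\|_{0,\Omega_0}}_{(I)}+\underbrace{\|p^{\delta}(\cdot,T^*)-\Pi^h p^{\delta}(\cdot,T^*)\|_{0,\Omega_0}}_{(II)} \\
&& +\,\underbrace{\|\Pi^h p^{\delta}(\cdot,T^*)-\Pi^h p^{\dagger}\|_{0,\Omega_0}}_{(III)}+\underbrace{\|\Pi^h p^{\dagger}-p^{\dagger}\|_{0,\Omega_0}}_{(IV)},
\end{eqnarray*}
so that the semi-discrete solution is compared with the continuous solution through the $L^2$-projection onto the finite element space.

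Term $(I)$ is controlled by Proposition \ref{Convph}, giving $(I)\leq C(\Omega) h(\|g_1^\delta\|_{0,\Gamma}+\|g_2^\delta\|_{0,\Gamma})=O(h)$ uniformly for small $\delta$ by the noise model (\ref{noiseLevel1}). Terms $(II)$ and $(IV)$ are projection errors and, invoking the standing $H^1$-regularity hypothesis on $p^{\delta}(\cdot,t)$ and on $p^{\dagger}$, the estimate (\ref{projectionInequation}) with $k=1$, $m=0$ yields $(II)\leq C h\,|p^{\delta}(\cdot,T^*)|_{1,\Omega_0}$ and $(IV)\leq C h\,|p^{\dagger}|_{1,\Omega_0}$, both $O(h)$ provided $|p^{\delta}(\cdot,T^*(\delta))|_{1,\Omega_0}$ stays bounded as $\delta\to 0$. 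Term $(III)$ is the crucial one and is handled exactly as anticipated in Remark \ref{RemarkWeakConv}(b): by Theorem \ref{ThemRegu1}, $p^{\delta}(\cdot,T^*(\delta))\rightharpoonup p^{\dagger}$ weakly in $P$, and because $\Pi^h$ is a bounded linear operator into the \emph{finite-dimensional} space $P^h$ (on which weak and strong topologies coincide), we get $\Pi^h p^{\delta}(\cdot,T^*(\delta))\to \Pi^h p^{\dagger}$ strongly in $P^h$ as $\delta\to 0$ for each fixed $h$.

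To assemble the joint limit, fix $\varepsilon>0$. I would first pick $h_0$ so small that $(I)+(II)+(IV)<\varepsilon/2$ uniformly in $\delta$ (this is the point where the coupling $h=h(\delta)\to 0$ may be needed, chosen so that $h\,|p^{\delta}(\cdot,T^*(\delta))|_{1,\Omega_0}\to 0$). Then, with $h_0$ fixed, the finite-dimensional weak-to-strong argument gives a threshold $\delta_0=\delta_0(h_0)$ below which $(III)<\varepsilon/2$. Combining the two bounds yields strong convergence in $L^2(\Omega_0)$, which finishes the proof.

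The main obstacle is the control of $(II)$: we have $H^1$-membership but no uniform $H^1$-bound for $p^{\delta}(\cdot,T^*(\delta))$ along the terminating sequence, and since $T^*(\delta)$ is selected by a discrepancy principle that may drive $T^*\to\infty$, the seminorm could in principle blow up. The clean fix is to let $h=h(\delta)$ decrease fast enough (e.g.\ $h(\delta)\cdot|p^{\delta}(\cdot,T^*(\delta))|_{1,\Omega_0}\to 0$), which is consistent with the statement "$\delta,h\to 0$"; alternatively, one may derive a priori $H^1$-stability for the Cauchy problem (\ref{DDBNandDDS})--(\ref{prou}) along the lines of the energy arguments in Section 3 but carried out in the $H^1(\Omega_0)$-norm, which would give the desired uniform bound directly.
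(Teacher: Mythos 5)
Your proposal is correct and follows essentially the same route as the paper: the identical four-term triangle-inequality decomposition through $\Pi^h p^{\delta}(\cdot,T^*)$, with term $(I)$ bounded by Proposition \ref{Convph}, terms $(II)$ and $(IV)$ by the projection estimate (\ref{projectionInequation}), and term $(III)$ by Theorem \ref{ThemRegu1} together with the weak-to-strong coincidence in the finite-dimensional space $P^h$ as in Remark \ref{RemarkWeakConv}(b). Your closing observations --- that the weak-to-strong step only works for \emph{fixed} $h$ so the joint limit must be taken in the right order, and that $(II)$ requires a uniform $H^1$-bound (or a coupling $h(\delta)$) along the terminating sequence $T^*(\delta)$ --- are in fact more careful than the paper's own one-line treatment of these points, which leaves them implicit.
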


\begin{proof}
By the triangle inequality
\begin{eqnarray*}
&& \|p^{\delta,h}(\cdot,T^*(\delta)) - p^{\dagger}(\cdot)\|_{0,\Omega_0} \leq \|p^{\delta,h}(\cdot,T^*(\delta)) - p^{\delta}(\cdot,T^*(\delta))\|_{0,\Omega_0} + \\ && \|p^{\delta}(\cdot,T^*(\delta)) - \Pi^h p^{\delta}(\cdot,T^*(\delta))\|_{0,\Omega_0} + \|\Pi^h p^{\delta}(\cdot,T^*(\delta)) - \Pi^h p^{\dagger}(\cdot)\|_{0,\Omega_0} + \|\Pi^h p^{\dagger}(\cdot) - p^{\dagger}(\cdot)\|_{0,\Omega_0},
\end{eqnarray*}
it suffices to show the convergence of all terms in the right-hand side of the above inequality. The convergence of the first term follows from Proposition \ref{Convph}, while the second and fourth terms converge to 0 because of the inequality (\ref{projectionInequation}). Finally, the convergence of the third term follows from Theorem \ref{ThemRegu1} and the assertion (b) of Remark \ref{RemarkWeakConv}.
\end{proof}

Finally, we give a sketch of the finite element method for problems (\ref{prow}) and (\ref{prou}). For conciseness, by slightly abusing the notation, we rewrite
$p^{\delta,h}$, $\dot{p}^{\delta,h}$ and $\ddot{p}^{\delta,h}$ to $p^{h}$, $\dot{p}^{h}$
and $\ddot{p}^{h}$. Let $m$ be the number of the nodes of triangulation
$\mathcal{T}$, and $\{\psi_l\}^m_{l=1}$ be the nodal basis functions of the linear finite
element space $\Psi^{h}$ associated with the grid points $\{x_l\}^m_{l=1}$. Then
$u^{h}(x,t) = \sum^m_{l=1} u_l(t) \psi_l(x)$ with $u_l(t)=u^{h}(x_l,t)\in L^2((t_0,\infty),\mathbb{C})$ and
$w^{h}(x,t) = \sum^m_{l=1} w_l(t) \psi_l(x)$ with $w_l(t)=w^{h}(x_l,t)\in L^2((t_0,\infty),\mathbb{C})$.
Denote $\{x_{k_l}\}^{m_0}_{l=1}=\{x_l\}^m_{l=1}\cap \overline{\Omega}_0$, $p^{h}(x,t) = \sum^{m_0}_{l=1} p_l(t) \psi_{k_l}(x)$ with $p_l(t)=p^{h}(x_{k_l},t)\in L^2((t_0,\infty),\mathbb{R})$.
As a result, the problem (\ref{discreteBVP}) reduces to the following algebraic system with any fixed $t$:
\begin{equation}\label{system1}
\left\{\begin{array}{l}
(D+E) \mathbf{u}_{re}(t) - F \mathbf{u}_{im}(t) = B \mathbf{p}(t) + \mathbf{b}_2, \\
F \mathbf{u}_{re}(t) + (D+E) \mathbf{u}_{im}(t) = \mathbf{b}_1,
\end{array}\right.
\end{equation}
where
\begin{eqnarray*}
D=[d_{ls}]_{m\times m},  d_{ls}=\int_\Omega \nabla \psi_s \cdot \nabla \psi_l dx,\quad
E = [e_{ls}]_{m\times m},  e_{ls}=\int_\Omega  \psi_s \psi_l dx, \\
F=[f_{ls}]_{m\times m},  f_{ls}=\int_\Gamma  \psi_s \psi_l ds,
\quad B=[b_{lj}]_{m\times {m_0}}, b_{lj}=\int_{\Omega_0} \psi_l(x) \psi_{k_j}(y) dx, \\
\mathbf{b}_1=[b_{1,l}]_{m\times 1},  b_{1,l}=\int_\Gamma g^\delta_1 \psi_l ds,
\quad \mathbf{b}_2=[b_{2,l}]_{m\times 1},  b_{2,l}=\int_\Gamma g^\delta_2 \psi_l ds, \\
\mathbf{u}_{re}=[u_{re,l}]_{m\times 1},~\mathbf{u}_{im}=[u_{im,l}]_{m\times 1},
~\mathbf{p}=[p_j]_{m_0\times 1},~l,s=\overline{1,m},j=\overline{1,m_0}.
\end{eqnarray*}

Similarly, for any fixed $t$, finding a weak solution of (\ref{Discrete_w}) reduces to
solve the following system of linear equations
\begin{equation}\label{system2}
\left\{\begin{array}{l}
(D+E) \mathbf{w}_{re}(t) - F \mathbf{w}_{im}(t) = E \mathbf{u}_{im}(t), \\
F \mathbf{w}_{re}(t) + (D+E) \mathbf{w}_{im}(t) = \mathbf{0}.
\end{array}\right.
\end{equation}

\subsection{Time discretization and a novel iterative regularization algorithm}
The second order evolution equation (\ref{DDBNandDDS}) with an appropriate numerical discretization scheme for the artificial time variable yields a concrete second order iterative regularization method. The damped symplectic integrators are extremely attractive for solving second order systems, since the schemes are closely related to the canonical transformations (\cite{Hairer-2006}), and the trajectories of the discretized second flow usually kept some intrinsic invariants of the system. In this paper, we use the St\"{o}rmer-Verlet method, which belongs to the family of symplectic integrators.

Denote $q^h(x,t)=\dot{p}^h(x,t)$, and rewrite
(\ref{DDBNandDDS_semiDiscretized}) into the first order system
\begin{equation}\label{DDBNandDDS_semiDiscretized_firstOrder}
\left\{\begin{array}{ll}
\dot{q}^h = - \eta q^h - w^h_{im} \chi_{\Omega_0} , \\
\dot{p}^h = q^h, \\
p^h(t_0)=p^h_{0}, q^h(t_0)=\dot{p}^h_{0}.
\end{array}\right.
\end{equation}

Apply the St\"{o}rmer-Verlet method to the system (\ref{DDBNandDDS_semiDiscretized_firstOrder}) to obtain that at the $k$-th iteration
\begin{equation}\label{symplectic}
\left\{\begin{array}{l}
q^{h}_{k+\frac{1}{2}} = q^{h}_{k} - \frac{\Delta t}{2} \left( \eta_k q^{h}_{k} + w^{h}_{im}(p^{h}_{k}) \chi_{\Omega_0} \right), \\
p^{h}_{k+1} = p^{h}_{k} + \Delta t q^{h}_{k+\frac{1}{2}}, \\
q^{h}_{k+1} = q^{h}_{k+\frac{1}{2}} - \frac{\Delta t}{2} \left( \eta_{k+1} q^{h}_{k+\frac{1}{2}} + w^{h}_{im}(p^{h}_{k+1}) \chi_{\Omega_0} \right), \\
q^{h}(t_0)=\dot{p}^{h}_{0}, p^{h}(t_0)=p^{h}_{0},
\end{array}\right.
\end{equation}
where $p^{h}_{k}=p^{\delta,h}(t_k)$, and $\Delta t$ is the time step size.

Taking into account of the discrepancy principle for choosing the terminating time point, the newly developed numerical algorithm is proposed as follows:

\begin{algorithm}[H]
\label{Alorithm1}
\caption{The St\"{o}rmer-Verlet based SOAR for inverse source problem (\ref{BVP}).}
\begin{algorithmic}[1]
\Require Boundary data $\{g^\delta_1,g^\delta_2\}$. Noise level $\delta$. Damping parameter $\eta(t)$.
Time step size $\Delta t$. The permissible region $\Omega_0$. Triangulation $\mathcal{T}$ of domain
$\Omega$ with the nodal basis functions $\{\psi_i\}^m_{i=1}$. Precision number $\epsilon_0$.
Initial values: $(\mathbf{p}^0,\mathbf{q}^0)$. Iteration index: $k\gets0$.

\Ensure The estimated source term: $\hat{p}^{h} = \sum^{m_0}_{l=1} \mathbf{p}^{k}_l \psi_{k_l}$.

\While{$\chi(t_k)>\epsilon_0$ or $\chi_{TE}(t_k)>\epsilon_0$}

\State Solve (\ref{system1}) and (\ref{system2}) with source $\mathbf{p}^k$ to get $\mathbf{w}^k_{im}$.

\State $\mathbf{q}^{k+\frac{1}{2}} \gets
\mathbf{q}^{k} - \frac{\Delta t}{2} \left( \eta_k \mathbf{q}^{k} + \mathbf{w}^k_{im} \right)$

\State $\mathbf{p}^{k+1} \gets \mathbf{p}^{k} + \Delta t \mathbf{q}^{k+\frac{1}{2}}$

\State Solve (\ref{system1}) and (\ref{system2}) with source $\mathbf{p}^{k+1}$ to get $\mathbf{w}^{k+1}_{im}$.

\State $\mathbf{q}^{k+1} \gets \mathbf{q}^{k+\frac{1}{2}} - \frac{\Delta t}{2} \left( \eta_{k+1} \mathbf{q}^{k+\frac{1}{2}} + \mathbf{w}_{im}^{k+1}) \right)$

\State $t_{k+1} \gets t_k + \Delta t$

\State $k\gets k+1$
\EndWhile
\end{algorithmic}
\end{algorithm}


\section{Simulations}

In this section, we present some numerical examples to demonstrate the effectiveness of the proposed second order asymptotical regularization (SOAR) methods. With the problem domain $\Omega$,
Neumann data $g_2$, and a prescribed true source function $p^\dagger$ in $\Omega_0\subset \Omega$,
by using the standard linear finite element method defined in Subsection 5.1,
we solve the forward BVP
\begin{equation}
\begin{array}{ll}
 -\triangle u + u =p^\dagger\chi_{\Omega_0}
\textmd{~in~} \Omega,  \textmd{~and~} \frac{\partial u}{\partial \mathbf{n}} = g_2 \textmd{~on~}\ \Gamma
\end{array}
\label{nm1}
\end{equation}
to get $u^{h}\in \Psi^{h}$. Use $g_1=u^{h}|_{\Gamma}$ for the boundary measurement. Uniformly distributed
noises with the relative error level $\delta'$ are added to both $g_1$ and $g_2$ to get $g^{\delta}_1$ and $g^{\delta}_2$:
\[ g^{\delta}_j(x)=[1+\delta'\cdot(2\,\textrm{rand}(x)-1)]\,g_j(x),\quad x\in \Gamma,\quad j = 1, 2,\]
where rand$(x)$ returns a pseudo-random value drawn from a uniform distribution on $[0, 1]$. The noise level of measurement data is calculated by $\delta=\max_{j=1,2} \|g^{\delta}_j-g_j\|_{\infty,\Gamma}$. Then, with the noisy data $g^{\delta}_1$ and $g^{\delta}_2$, properly chosen parameters, e.g. $\eta$ and $\Delta t$,
\textbf{Algorithm 1} is implemented to get $p^{h}$~-- a stable approximation of $p^\dagger$ by SOAR.
In all experiments below, we set $g_2\equiv 0 $ on $\Gamma$, $t_0=1$ and the precision parameter $\epsilon_0=10^{-6}$.
We use $N_{max}$ as the maximal number of iterations where \textbf{Algorithm 1} stops, which may have different values in different experiments.

We refer to SOAR1 as \textbf{Algorithm 1} when $\eta$ is constant and $\chi$ is used;
SOAR2 when $\eta$ is constant and $\chi_{TE}$ is used; SOAR3 when $\eta=r/t$ and $\chi$ is used;
SOAR4 when $\eta=r/t$ and $\chi_{TE}$ is used. To assess the accuracy of the approximate
solutions, we define the $L^2$-norm relative error for an approximate solution $p^{h}$:
$ \textrm{L2Err}:= \|p^{h}-p^\dagger\|_{0,\Omega} / \|p^\dagger\|_{0,\Omega} $.
All experiments in Subsection \ref{subsec:conv}--\ref{subsec:comparison} are
implemented for the following two examples:

\smallskip

\textbf{Example 1}: $\Omega:=\{(x_1,x_2)\in\mathbb{R}^2|\,x_1^2+x^2_2<1\}$,
$\Omega_0:=\{(x_1,x_2)\in\mathbb{R}^2|\,-0.5<x_1,x_2<0.5\}$. $p^\dagger(x_1,x_2)=(1+x_1+x_2)\chi_{\Omega_0}$.  The Dirichlet data $g_1$ is computed on a
mesh with mesh size $h=0.01386$, 144929 nodes and 288768 elements.

\smallskip

\textbf{Example 2}: $\Omega$ is the same as Example 1.
$\Omega_0=\Omega_1\bigcup\Omega_2$ with
$\Omega_1:=\{(x_1,x_2)\in\mathbb{R}^2|\,(x_1+0.5)^2+x^2_2<0.01\}$
and $\Omega_2:=\{(x_1,x_2)\in\mathbb{R}^2|\,(x_1-0.5)^2+x^2_2<0.01\}$. $p^\dagger(x_1,x_2)=(1+x_1+x_2)\chi_{\Omega_1}+e^{1+x_1+x_2}\chi_{\Omega_2}$. The Dirichlet data $g_1$ is computed on a mesh with $h=0.01228$, 156225 nodes and 311296 elements.

For Example 1, all approximate sources are
reconstructed over a mesh with mesh size $h=0.1293$, 599 nodes and 1128 elements.
For Example 2, all approximate sources are
reconstructed over a mesh with mesh size $h=0.1222$, 645 nodes and 1216 elements.

\subsection{Regularization of the method}\label{subsec:conv}

\begin{figure}[!b]
\begin{center}
\subfigure[]{
\includegraphics[width=2.2in]{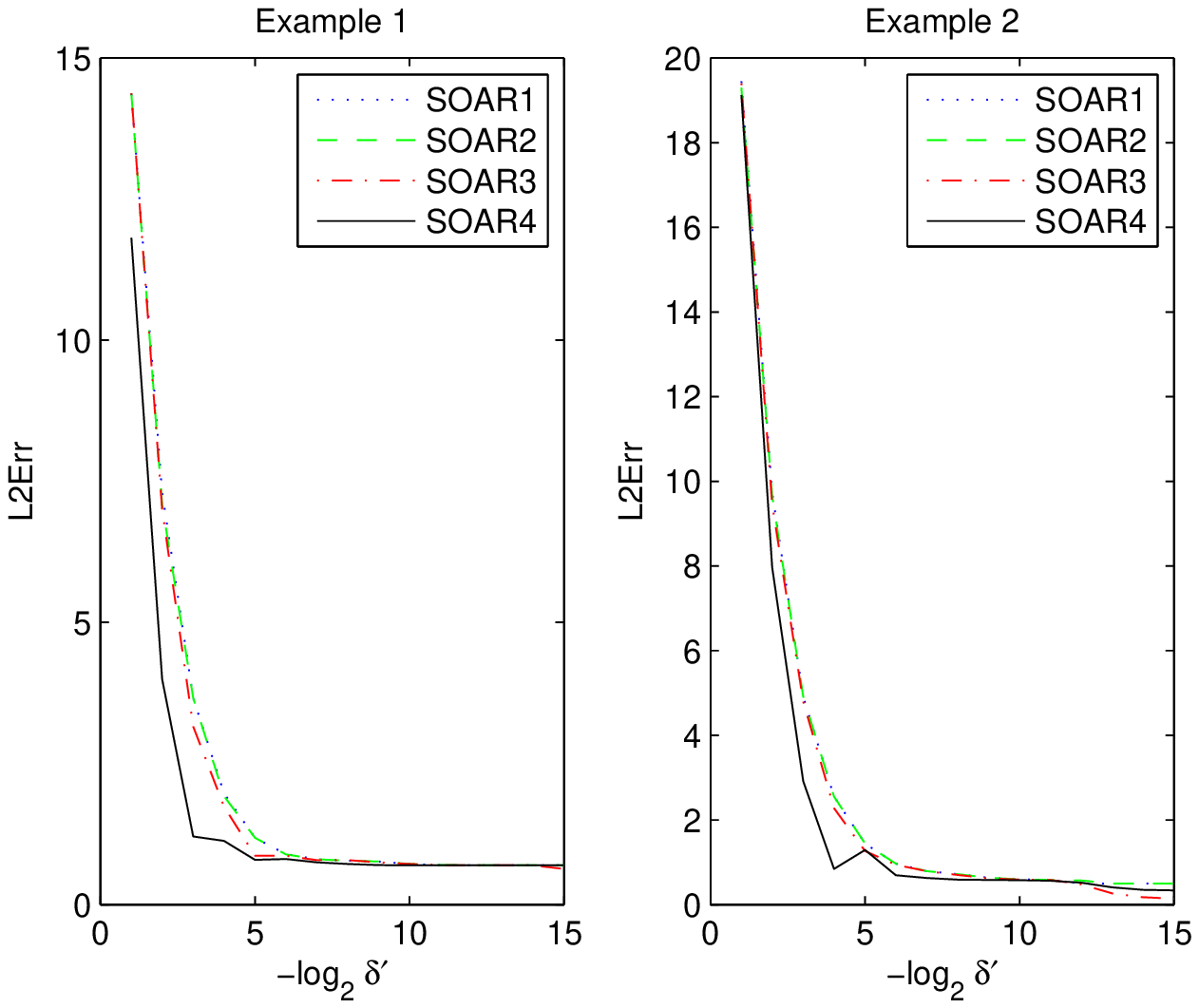}}
\subfigure[]{
\includegraphics[width=2.2in]{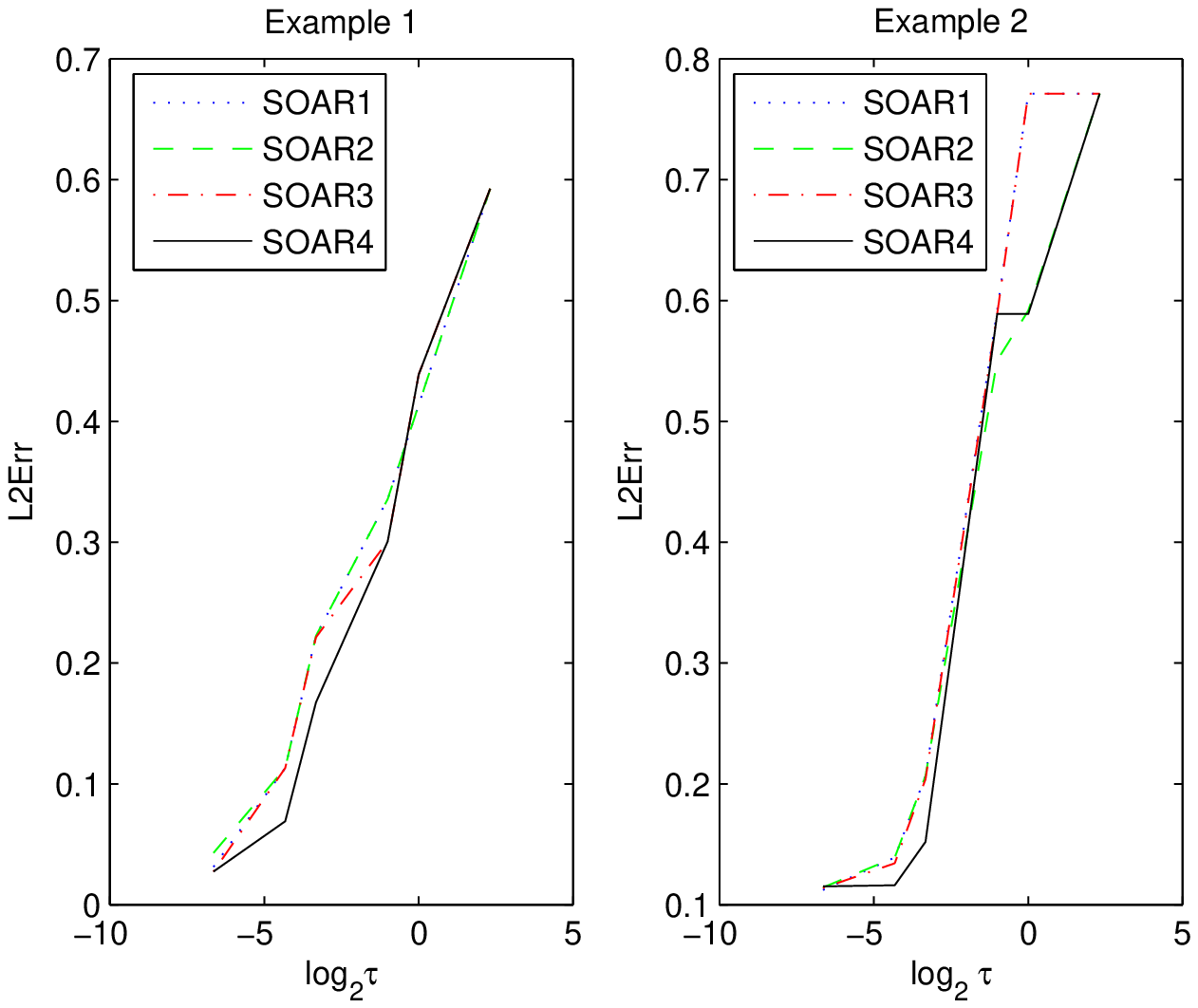}}
\end{center}
\caption{(a) Evolutions of L2Err vs. $\delta'$. (b) Evolutions of L2Err vs. $\tau$ with $\Delta t=10, \eta=0.1$ or $5/t$. }
\label{fig:L2ErrVsDelta}
\end{figure}

\vspace*{-0.1cm}

We first validate the convergence result of Theorem \ref{ThemRegu1}. \textbf{Algorithm 1} is
implemented for $\delta'=2^{-1}, 2^{-2}$, $\cdots,2^{-15}$.
As indicated by the assumptions of Lemma \ref{Rootdiscrepancy} and Theorem \ref{ThemRegu1},
let $\tau=1.1$ (used in (\ref{Morozov}) and (\ref{TotalEnergy})), $\eta=1$ when it is constant,
$\eta=5/t$ when it is dynamic, $p_0=30, q_0=\dot{p}_0=0$ in $\Omega_0$
for Example 1, and $p_0=70, q_0=\dot{p}_0=0$ in $\Omega_0$
for Example 2 so that $(p_0,\dot{p}_0)$ satisfies $\|u_{im}(p_0)\|_{0,\Omega}> C_0 \tau\delta$
and $V(p_0) + \|\dot{p}_0\|^2_P> C^2_0 \tau \delta^2$ ($C_0= 2\sqrt{2\pi}$). Moreover, for the implementation of \textbf{Algorithm 1},
set the time step $\Delta t=1$.

\begin{table}[!b]
{\footnotesize
\begin{center}
\begin{tabular}{|c|c|c|c|c|c|} \hline
\multirow{2}{*}{$\delta'$} &
\multicolumn{2}{c|}{SOAR1} &
\multicolumn{2}{c|}{SOAR2} \\
\cline{2-5}
& \textrm{L2Err} & \textrm{IterNum} & \textrm{L2Err} & \textrm{IterNum}  \\ \hline

$2^{-1}$	&14.3642	&21	&14.3642	&21	\\
$2^{-2}$	&7.3013	&43	&7.0815	&44	\\
$2^{-3}$	&3.6594	&66	&3.6594	&66	\\
$2^{-4}$	&1.9695	&88	&1.9190	&89	\\
$2^{-5}$	&1.1803	&111	&1.1803	&111\\
$2^{-6}$	&0.8950	&133	&0.8881	&134\\
$2^{-7}$	&0.8021	&156	&0.8002	&157\\
$2^{-8}$	&0.7744	&182	&0.7744	&182\\
$2^{-9}$	&0.7638	&226	&0.7638	&226\\
$2^{-10}$	&0.7171	&957	&0.7178	&936\\
$2^{-11}$	&0.7004	&1835	&0.7012	&1754\\
$2^{-12}$	&0.6961	&2723	&0.6969	&2441\\
$2^{-13}$	&0.6950	&3650	&0.6958	&2868\\
$2^{-14}$	&0.6946	&4790	&0.6955	&3043\\
$2^{-15}$	&0.6918	&$N_{max}$	&0.6955	&3095\\ \hline
\multirow{2}{*}{$\delta'$} &
\multicolumn{2}{c|}{SOAR3} &
\multicolumn{2}{c|}{SOAR4} \\
\cline{2-5}
& \textrm{L2Err} & \textrm{IterNum} & \textrm{L2Err} & \textrm{IterNum}  \\ \hline
$2^{-1}$	&14.3728	&14	&11.8109	&16	\\
$2^{-2}$	&7.0005	&20	&3.9916	&23	\\
$2^{-3}$	&3.1556	&24	&1.2075	&27	\\
$2^{-4}$	&1.7341	&26	&1.1257	&31	\\
$2^{-5}$	&0.8636	&28	&0.7910	&46	\\
$2^{-6}$	&0.8637	&28	&0.8056	&61	\\
$2^{-7}$	&0.7870	&29	&0.7453	&80	\\
$2^{-8}$	&0.7871	&29	&0.7165	&102	\\
$2^{-9}$	&0.7484	&66	&0.6993	&137	\\
$2^{-10}$	&0.7162	&102	&0.6948	&175	\\
$2^{-11}$	&0.6989	&138	&0.6949	&228	\\
$2^{-12}$	&0.6958	&156	&0.6945	&283 \\
$2^{-13}$	&0.6948	&174	&0.6944	&321	\\
$2^{-14}$	&0.6945	&282	&0.6943	&339	\\
$2^{-15}$	&0.6302	&3829	&0.6943	&350 \\\hline
\end{tabular}
\end{center}
}
\caption{Example 1: L2Err and IterNum vs $\delta'$ with $\tau=1.1, \Delta t=1, \eta=1$ or $5/t$, $N_{max}=50000$.}
\label{tab:L2ErrVsDelta}
\end{table}

The evolutions of L2-norm relative errors in approximate solutions computed
from \textbf{Algorithm 1} are plotted in (a) of Figure \ref{fig:L2ErrVsDelta},
which indicates that \textbf{Algorithm 1} for all four cases are convergent
and, thus confirms the theoretical analysis. The detailed errors and the corresponding
iterative numbers are given in Tables \ref{tab:L2ErrVsDelta} and \ref{tab:L2ErrVsDelta2},
where we can see that for both examples,
using a dynamic damping parameter $\eta(t)$ and the total energy discrepancy functional $\chi_{TE}$
can accelerate the iteration, and this is particularly remarkable when the noise level $\delta'$
is relatively small. However, as shown in Figure \ref{fig:L2ErrVsDelta}(a) and
Tables \ref{tab:L2ErrVsDelta} and \ref{tab:L2ErrVsDelta2}, compared with the noise level $\delta$, the accuracy
of the obtained approximate solution is not highly qualified. This is because the
iterations stop before getting satisfactory approximate solutions. As mentioned in
Remark \ref{Remarktau}, constants $\eta\geq 1$ and $\tau>1$ are just the sufficient
conditions for Lemmas \ref{LemmaVanishingExact} and \ref{Rootdiscrepancy}. As we
shall see in the next subsection, using smaller values of the parameters $\eta$ and $\tau$
will significantly improve the solution accuracy.

\begin{table}[!tbh]
{\footnotesize
\begin{center}
\begin{tabular}{|c|c|c|c|c|c|} \hline
\multirow{2}{*}{$\delta'$} &
\multicolumn{2}{c|}{SOAR1} &
\multicolumn{2}{c|}{SOAR2} \\
\cline{2-5}
& \textrm{L2Err} & \textrm{IterNum} & \textrm{L2Err} & \textrm{IterNum}  \\ \hline

$2^{-1}$	&19.4452	&9	&19.2942	&10	\\
$2^{-2}$	&9.7309	&98	&9.6558	&99	\\
$2^{-3}$	&4.9134	&187	&4.9134	&187	\\
$2^{-4}$	&2.5508	&276	&2.5508	&276	\\
$2^{-5}$	&1.4386	&365	&1.4386	&365 \\
$2^{-6}$	&0.9641	&456	&0.9611	&457 \\
$2^{-7}$	&0.7870	&554	&0.7870	&554 \\
$2^{-8}$	&0.7181	&690	&0.7181	&690 \\
$2^{-9}$	&0.6344	&1423	&0.6349	&1416 \\
$2^{-10}$	&0.5912	&2615	&0.5917	&2586 \\
$2^{-11}$	&0.5786	&4085	&0.5793	&3915 \\
$2^{-12}$	&0.4981	&$N_{max}$	&0.5656	&10262 \\
$2^{-13}$	&0.4981	&$N_{max}$	&0.4981	&$N_{max}$ \\
$2^{-14}$	&0.4981	&$N_{max}$	&0.4981	&$N_{max}$ \\
$2^{-15}$	&0.4981	&$N_{max}$	&0.4981	&$N_{max}$ \\ \hline
\multirow{2}{*}{$\delta'$} &
\multicolumn{2}{c|}{SOAR3} &
\multicolumn{2}{c|}{SOAR4} \\
\cline{2-5}
& \textrm{L2Err} & \textrm{IterNum} & \textrm{L2Err} & \textrm{IterNum}  \\ \hline
$2^{-1}$	&19.3905	&9	&19.1197	&10	\\
$2^{-2}$	&9.4475	&32	&7.9659	&35	\\
$2^{-3}$	&4.7961	&42	&2.9125	&47	\\
$2^{-4}$	&2.2760	&49	&0.8393	&56	\\
$2^{-5}$	&1.2621	&53	&1.2887	&67	\\
$2^{-6}$	&0.9376	&55	&0.6924	&95	\\
$2^{-7}$	&0.7884	&57	&0.6254	&129	\\
$2^{-8}$	&0.6978	&93	&0.5864	&167	\\
$2^{-9}$	&0.6223	&130	&0.5804	&222	\\
$2^{-10}$	&0.5869	&165	&0.5714	&300	\\
$2^{-11}$	&0.5792	&201	&0.5585	&409	\\
$2^{-12}$	&0.4749	&873	&0.5187	&658 \\
$2^{-13}$	&0.2628	&1714	&0.4033	&1171	\\
$2^{-14}$	&0.1703	&2165	&0.3491	&1379	\\
$2^{-15}$	&0.1372	&2451	&0.3343	&1435	\\\hline
\end{tabular}
\end{center}
}
\caption{Example 2: L2Err and IterNum vs $\delta'$ with $\tau=1.1, \Delta t=1, \eta=1$ or $5/t$, $N_{max}=50000$.}
\label{tab:L2ErrVsDelta2}
\end{table}

\subsection{Influence of parameters}\label{subsec:parameter}

The purpose of this subsection is to explore the dependence of the solution accuracy
and the convergence speed on $\tau>0$, time step size $\Delta t$,
damping parameter $\eta$ when it is constant or $r$ when $\eta(t)=r/t$, and
thus to give a guide on the choices of them in practice. For focusing on
the effect of these parameters on \textbf{Algorithm 1}, we fix $\delta'= 5\%$
in this subsection. Moreover, in the remaining part of this section,
we simply set $p_0=q_0=0$. In addition, because the parameter $\tau$
does not involve the computation of the approximate solutions itself and only affects
the iterative number where \textbf{Algorithm 1} stops,
in the following, by slightly abusing the notation, we refer $\tau$ as $C_0 \tau$.

We first investigate the influence of parameter $\tau$ on the convergence rate.
For this purpose, we additionally set $\Delta t=10$, $\eta=0.1$ when $\eta$
is constant or $\eta = 5/t$ when $\eta$ is dynamic. The detailed L2-norm
relative errors `L2Err' and the corresponding iterative numbers `IterNum' for
 different values of $\tau$ are shown in Tables \ref{tab:L2ErrVsTau} and \ref{tab:L2ErrVsTau2},
which show that on one hand, the smaller $\tau$ is, the better the solution accuracy is;
on the other hand, the smaller $\tau$ is, the more the iterative number
for stopping \textbf{Algorithm 1} is. It is no surprise that the parameter $\tau$
does not involve the computation of the approximate solutions itself. It is used in
stop criterion and only affects the iterative number where \textbf{Algorithm 1} stops.
Therefore, it is natural that a larger iterative number produces a better approximate solution,
and this also confirms the asymptotical behavior of the proposed method. The evolutions
of L2Err vs. $\tau$ for both examples and four cases of \textbf{Algorithm 1} are plotted in (b) of
Figure \ref{fig:L2ErrVsDelta}. Generally, $\tau<1$ is enough to produce reasonable approximate solutions.
Note that, as shown in Subsection \ref{subsec:conv}, bigger $\tau$ may produce
satisfactory approximate solutions when the noise level $\delta$ is rather small.

\begin{table}[H]
{\footnotesize
\begin{center}
\begin{tabular}{|c|c|c|c|c|c|} \hline
\multirow{2}{*}{$\tau$} &
\multicolumn{2}{c|}{SOAR1} &
\multicolumn{2}{c|}{SOAR2} \\
\cline{2-5}
& \textrm{L2Err} & \textrm{IterNum} & \textrm{L2Err} & \textrm{IterNum}  \\ \hline
0.01 & 0.0312 & 35 & 0.0429 & 28  \\
0.05 & 0.1131 & 15 & 0.1131 & 15   \\
0.1 & 0.2223 & 7 & 0.2223 & 7  \\
0.5 & 0.3355 & 3 & 0.3355 & 3  \\
1 & 0.4134 & 2 & 0.4134 & 2   \\
5 & 0.5925 & 1 & 0.5925 & 1   \\ \hline
\multirow{2}{*}{$\tau$} &
\multicolumn{2}{c|}{SOAR3} &
\multicolumn{2}{c|}{SOAR4} \\
\cline{2-5}
& \textrm{L2Err} & \textrm{IterNum} & \textrm{L2Err} & \textrm{IterNum}  \\ \hline
0.01 & 0.0274 & 19 & 0.0274 & 19 \\
0.05 & 0.1131 & 12 & 0.0689 & 14 \\
0.1 & 0.2212 & 8 & 0.1673 & 10 \\
0.5 & 0.3006 & 6 & 0.3006 & 6 \\
1 & 0.4388 & 4 & 0.4388 & 4  \\
5 & 0.5925 & 1 & 0.5925 & 1 \\ \hline
\end{tabular}
\end{center}
}
\caption{Example 1: L2Err and IterNum vs $\tau$ with $\Delta t=10, \eta=0.1$ or $5/t$. }
\label{tab:L2ErrVsTau}
\end{table}

\begin{table}[H]
{\footnotesize
\begin{center}
\begin{tabular}{|c|c|c|c|c|c|} \hline
\multirow{2}{*}{$\tau$} &
\multicolumn{2}{c|}{SOAR1} &
\multicolumn{2}{c|}{SOAR2} \\
\cline{2-5}
& \textrm{L2Err} & \textrm{IterNum} & \textrm{L2Err} & \textrm{IterNum}  \\ \hline
0.01 & 0.1123 & 58 & 0.1143 & 49  \\
0.05 & 0.1391 & 31 & 0.1391 & 31   \\
0.1 & 0.2065 & 20 & 0.2065 & 20  \\
0.5 & 0.5914 & 2 & 0.5504 & 3  \\
1 & 0.7709 & 1 & 0.5914 & 2   \\
5 & 0.7709 & 1 & 0.7709 & 1   \\ \hline
\multirow{2}{*}{$\tau$} &
\multicolumn{2}{c|}{SOAR3} &
\multicolumn{2}{c|}{SOAR4} \\
\cline{2-5}
& \textrm{L2Err} & \textrm{IterNum} & \textrm{L2Err} & \textrm{IterNum}  \\ \hline
0.01 & 0.1137 & 20 & 0.1150 & 29 \\
0.05 & 0.1342 & 17 & 0.1159 & 19 \\
0.1 & 0.2037 & 14 & 0.1519 & 16 \\
0.5 & 0.5887 & 2 & 0.5887 & 2 \\
1 & 0.7709 & 1 & 0.5887 & 2  \\
5 & 0.7709 & 1 & 0.7709 & 1 \\ \hline
\end{tabular}
\end{center}
}
\caption{Example 2: L2Err and IterNum vs $\tau$ with $\Delta t=10, \eta=0.1$ or $5/t$. }
\label{tab:L2ErrVsTau2}
\end{table}

Now we investigate the influence of time step size $\Delta t$ on the
solution accuracy and the convergence rate. To this end, set $\tau=0.01$,
$\eta = 0.1$ or $5/t$. The L2-norm relative errors 'L2Err'
and the corresponding iterative numbers 'IterNum' for both examples and four algorithms
are given in Tables \ref{tab:L2ErrVsDt} and \ref{tab:L2ErrVsDt2}, which show that the
bigger the time step size $\Delta t$ is, the faster the iteration is.
However, our experiments suggest that $\Delta t$ should not be too big.
Otherwise, the iteration will blow up as it breaks the consistency of the numerical scheme.  The evolutions of L2Err vs. $\Delta t$ are plotted in Figure \ref{fig:L2ErrVsDt}. In the remaining experiments, we choose $\Delta t=10$.

\begin{table}[H]
{\footnotesize
\begin{center}
\begin{tabular}{|c|c|c|c|c|c|} \hline
\multirow{2}{*}{$\Delta t$} &
\multicolumn{2}{c|}{SOAR1} &
\multicolumn{2}{c|}{SOAR2} \\
\cline{2-5}
& \textrm{L2Err} & \textrm{IterNum} & \textrm{L2Err} & \textrm{IterNum}  \\ \hline
0.01 & 0.3859 & $N_{max}$ & 0.3859 & $N_{max}$  \\
0.05 & 0.2709 & $N_{max}$ & 0.2709 & $N_{max}$  \\
0.1 & 0.1758 & $N_{max}$ & 0.1758 & $N_{max}$  \\
0.5 & 0.0322 & 677 & 0.0432 & 556  \\
1 & 0.0322 & 339 & 0.0433 & 278  \\
5 & 0.0317 & 69 & 0.0430 & 56  \\
10 & 0.0312 & 35 & 0.0429 & 28  \\ \hline
\multirow{2}{*}{$\Delta t$} &
\multicolumn{2}{c|}{SOAR3} &
\multicolumn{2}{c|}{SOAR4} \\
\cline{2-5}
& \textrm{L2Err} & \textrm{IterNum} & \textrm{L2Err} & \textrm{IterNum}  \\ \hline
0.01 & 0.7744 & $N_{max}$ & 0.7744 & $N_{max}$  \\
0.05 & 0.3478 & $N_{max}$ & 0.3178 & $N_{max}$  \\
0.1 & 0.1800 & $N_{max}$ & 0.1800 & $N_{max}$  \\
0.5 & 0.0313 & 332 & 0.0260 & 363  \\
1 & 0.0313 & 166 & 0.0261 & 182  \\
5 & 0.0284 & 34 & 0.0258 & 36  \\
10 & 0.0274 & 19 & 0.0274 & 19  \\ \hline
\end{tabular}
\end{center}
}
\caption{Example 1: L2Err and IterNum vs $\Delta t$ with $\tau=0.01, \eta=0.1$ or $5/t$, $N_{max}=1000$. }
\label{tab:L2ErrVsDt}
\end{table}

\begin{table}[H]
{\footnotesize
\begin{center}
\begin{tabular}{|c|c|c|c|c|c|} \hline
\multirow{2}{*}{$\Delta t$} &
\multicolumn{2}{c|}{SOAR1} &
\multicolumn{2}{c|}{SOAR2} \\
\cline{2-5}
& \textrm{L2Err} & \textrm{IterNum} & \textrm{L2Err} & \textrm{IterNum}  \\ \hline
0.01 & 0.8353 & $N_{max}$ & 0.8353 & $N_{max}$  \\
0.05 & 0.5027 & $N_{max}$ & 0.5027 & $N_{max}$  \\
0.1 & 0.3616 & $N_{max}$ & 0.3616 & $N_{max}$  \\
0.5 & 0.1123 & $N_{max}$ & 0.1145 & 965  \\
1 & 0.1123 & 576 & 0.1145 & 483  \\
5 & 0.1123 & 116 & 0.1145 & 97  \\
10 & 0.1123 & 58 & 0.1143 & 49 \\ \hline
\multirow{2}{*}{$\Delta t$} &
\multicolumn{2}{c|}{SOAR3} &
\multicolumn{2}{c|}{SOAR4} \\
\cline{2-5}
& \textrm{L2Err} & \textrm{IterNum} & \textrm{L2Err} & \textrm{IterNum}  \\ \hline
0.01 & 0.9521 & $N_{max}$ & 0.9521 & $N_{max}$  \\
0.05 & 0.5570 & $N_{max}$ & 0.5570 & $N_{max}$  \\
0.1 & 0.3691 & $N_{max}$ & 0.3691 & $N_{max}$  \\
0.5 & 0.1137 & 396 & 0.1134 & 615 \\
1 & 0.1137 & 198 & 0.1134 & 307  \\
5 & 0.1138 & 40 & 0.1142 & 60  \\
10 & 0.1137 & 20 & 0.1150 & 29 \\ \hline
\end{tabular}
\end{center}
}
\caption{Example 2: L2Err and IterNum vs $\Delta t$ with $\tau=0.01, \eta=0.1$ or $5/t$, $N_{max}=1000$. }
\label{tab:L2ErrVsDt2}
\end{table}

\begin{figure}[!t]
\begin{center}
\includegraphics[width=2.5in]{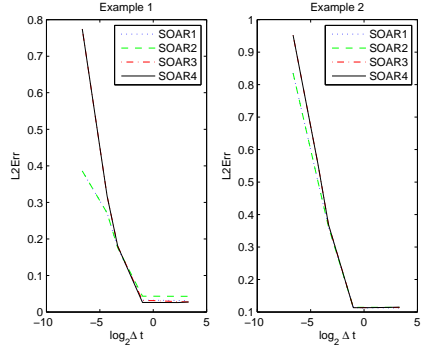}
\end{center}
\caption{Evolutions of L2Err vs. $\Delta t$  with $\tau=0.01, \eta=0.1$ or $5/t$.}
\label{fig:L2ErrVsDt}
\end{figure}

We next discuss the influence of the damping parameter $\eta$ on the
solution accuracy and the convergence rate. In the experiments,
set $\tau=0.01, \Delta t=10$. For constant $\eta$, the L2-norm relative errors 'L2Err'
and the corresponding iterative numbers 'IterNum' are given in Tables \ref{tab:L2ErrVsEta} and \ref{tab:L2ErrVsEta2} from which we conclude that $\eta\leq 0.1$ can lead to reasonable approximate solutions
for \textbf{Algorithm 1} for four cases. Nevertheless, $\eta$ should not be too small.
Too small $\eta$ brings oscillation in solution accuracy.  The evolutions of L2Err vs. $\eta$ are shown in Figure \ref{fig:L2ErrVsDt2}.  For dynamic damping parameter $\eta = r/t$, the
L2-norm relative errors 'L2Err' and the corresponding iterative numbers 'IterNum'
are given in Tables \ref{tab:L2ErrVsEta} and \ref{tab:L2ErrVsEta2}. The evolutions of 'L2Err' vs. the factor $r$
are also shown in Figure \ref{fig:L2ErrVsDt2}. Both Tables \ref{tab:L2ErrVsEta}, \ref{tab:L2ErrVsEta2} and
Figure \ref{fig:L2ErrVsDt2} indicate that, like $\eta$, the factor $r$ should
be neither too small nor too big. Too small $r$ also brings oscillation
in solution accuracy. Therefore,
in the remaining experiments, set $\eta=0.05$ when it is constant
while set $r=5$ when $\eta=r/t$.

\begin{figure}[H]
\begin{center}
\includegraphics[width=2.9in]{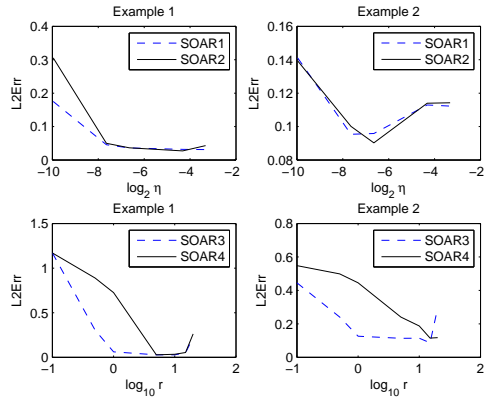}
\end{center}
\caption{Evolutions of L2Err vs. $\eta$ with $\tau=0.01, \Delta t=10$.}
\label{fig:L2ErrVsDt2}
\end{figure}

Finally, we discuss the choice of the initial data $(p_0,\dot{p}_0)$ for SOAR. According to the numerical experiments (for the concision of the statement, we omit the related numerical results), in most cases, the initial data $(p_0,\dot{p}_0)$ does not effect the result quality (the value of "L2Err"), but may influence the algorithm speed.
The closer the initial data $(p_0,\dot{p}_0)$ is to the unknown exact solution, the less of
the iteration number ``IterNum'' is required. Without knowledge of the exact solution, we recommend to set $p_0=\dot{p}_0=0$.

\subsection{Comparison with other methods}\label{subsec:comparison}

\begin{table}[!t]
{\footnotesize
\begin{center}
\begin{tabular}{|c|c|c|c|c|c|} \hline
\multirow{2}{*}{$\eta=const.$} &
\multicolumn{2}{c|}{SOAR1} &
\multicolumn{2}{c|}{SOAR2} \\
\cline{2-5}
& \textrm{L2Err} & \textrm{IterNum} & \textrm{L2Err} & \textrm{IterNum}  \\ \hline
$\eta=0.001$ & 0.1752 & 395 & 0.3057 & 849  \\
$\eta=0.005$ & 0.0455 & 97 & 0.0499 & 171  \\
$\eta=0.01$ & 0.0360 & 52 & 0.0365 & 87 \\
$\eta=0.05$ & 0.0316 & 14 & 0.0270 & 17 \\
$\eta=0.1$ & 0.0312 & 35 & 0.0429 & 28 \\ \hline
\multirow{2}{*}{$\eta=r/t$} &
\multicolumn{2}{c|}{SOAR3} &
\multicolumn{2}{c|}{SOAR4} \\
\cline{2-5}
& \textrm{L2Err} & \textrm{IterNum} & \textrm{L2Err} & \textrm{IterNum}  \\ \hline
$r=0.1$ & 1.1695 & $N_{max}$ & 1.1695 & $N_{max}$  \\
$r=0.5$ & 0.3030 & 440 & 0.8882 & $N_{max}$  \\
$r=1$ & 0.0616 & 131 & 0.7251 & $N_{max}$  \\
$r=5$ & 0.0274 & 19 & 0.0274 & 19  \\
$r=10$ & 0.0280 & 26 & 0.0343 & 23  \\
$r=15$ & 0.0503 & 40 & 0.0536 & 43  \\
$r=20$ & 0.2220 & 153 & 0.2622 & 139 \\ \hline
\end{tabular}
\end{center}
}
\caption{Example 1: L2Err and IterNum vs $\eta=const.$ or $r/t$  with $\tau=0.01, \Delta t=10$, $N_{max}=1000$. }
\label{tab:L2ErrVsEta}
\end{table}

\begin{table}[!t]
{\footnotesize
\begin{center}
\begin{tabular}{|c|c|c|c|c|c|} \hline
\multirow{2}{*}{$\eta=const.$} &
\multicolumn{2}{c|}{SOAR1} &
\multicolumn{2}{c|}{SOAR2} \\
\cline{2-5}
& \textrm{L2Err} & \textrm{IterNum} & \textrm{L2Err} & \textrm{IterNum}  \\ \hline
$\eta=0.001$ & 0.1410 & 208 & 0.1393 & 885  \\
$\eta=0.005$ & 0.0954 & 118 & 0.1001 & 176   \\
$\eta=0.01$ & 0.0958 & 49 & 0.0903 & 87 \\
$\eta=0.05$ & 0.1129 & 25 & 0.1140 & 22 \\
$\eta=0.1$ & 0.1123 & 58 & 0.1143 & 49 \\ \hline
\multirow{2}{*}{$\eta=r/t$} &
\multicolumn{2}{c|}{SOAR3} &
\multicolumn{2}{c|}{SOAR4} \\
\cline{2-5}
& \textrm{L2Err} & \textrm{IterNum} & \textrm{L2Err} & \textrm{IterNum}  \\ \hline
$r=0.1$ & 0.4453 & $N_{max}$ & 0.4453 & $N_{max}$  \\
$r=0.5$ & 0.2407 & $N_{max}$ & 0.2407 & $N_{max}$  \\
$r=1$ & 0.1261 & 136 & 0.1875 & $N_{max}$  \\
$r=5$ & 0.1137 & 20 & 0.1150 & 29  \\
$r=10$ & 0.1146 & 25 & 0.1175 & 22  \\
$r=15$ & 0.0863 & 46 & 0.0868 & 56  \\
$r=20$ & 0.2945 & 219 & 0.3201 & 195 \\ \hline
\end{tabular}
\end{center}
}
\caption{Example 2: L2Err and IterNum vs $\eta=const.$ or $r/t$  with $\tau=0.01, \Delta t=10$, $N_{max}=1000$. }
\label{tab:L2ErrVsEta2}
\end{table}

In this subsection, we compare the behaviors regarding the solution accuracy
and the convergence rate between SOAR and three existing methods; that is,
the Nesterov's method, the $\nu$-method and the dynamical regularization method (DRM)
proposed in~\cite{ZhangYe2018}. Recall that we use $\mathbf{p}$ as the coefficients of the finite element solution $p^h$, see
\textbf{Algorithm 1} for the detail. In all methods, we set $\tau=0.01$,
$\mathbf{p}^0=\mathbf{0}$, $\mathbf{q}^0=\mathbf{0}$ if $\mathbf{q}$ is involved,
and $\mathbf{p}^1=\mathbf{p}^0$ if the method is a two-step one. Moreover,
in SOAR2 and SOAR4, the total energy discrepancy priniciple $\chi_{TE}$ is used,
while, in all other methods, the usual discrepancy function $\chi$ is used.

For methods SOAR1-SOAR4, set $\Delta t=10, \eta=0.05$ or $5/t$. We remark that on the one hand,
these chosen parameters are not the optimal ones; on the other hand, a large
range of values of these parameters could produce satisfactory approximate sources $p^h$.

For the inverse source problem (\ref{BVP}) with CCBM formulation, DRM yields the following iteration
\begin{eqnarray}\label{DRM}
\left\{\begin{array}{ll}
\mathbf{q}^{k+1} =
\frac{1}{1+\eta\Delta t } \mathbf{q}^{k} - \frac{\Delta t}{1+\eta\Delta t }
\left( \mathbf{w}^k_{im} + \varepsilon(t_k) \mathbf{p}^{k} \right),  \\
\mathbf{p}^{k+1} = \mathbf{p}^{k} + \Delta t \mathbf{q}^{k+1},
\end{array}\right.
k= 0,1,\cdots,
\end{eqnarray}
where $(\mathbf{w}_{re}^k,\mathbf{w}_{im}^k)$ solves (\ref{system2}) with $\mathbf{u}_{im}$
replaced by $\mathbf{u}_{im}^k$, and $(\mathbf{u}_{re}^k,\mathbf{u}_{im}^k)$ solves
(\ref{system1}) with $\mathbf{p}$ replaced by $\mathbf{p}^k$. As suggested by numerical experiments of \cite{ZhangYe2018}, we set $\eta=1, \Delta t=10$ and the regularization parameter $\varepsilon(t)=0.1/(t\ln(t))$. It should be mentioned that DRM is not an acceleration method.

For the $\nu$-method, it is defined as~(\cite[\S~6.3]{engl1996regularization})
\begin{eqnarray}\label{nuMethod}
\begin{array}{ll}
\mathbf{p}^{k+1} = \mathbf{p}^{k} + \mu_k(\mathbf{p}^{k}-\mathbf{p}^{k-1})
- \omega_k \mathbf{w}_{im}^k, \quad k=1, 2, \cdots
\end{array}
\end{eqnarray}
with $\mu_1=0, \omega_1=(4\nu+2)/(4\nu+1)$ and
\begin{eqnarray*}
\mu_k= \frac{(k-1)(2k-3)(2k+2\nu-1)}{(k+2\nu-1)(2k+4\nu-1)(2k+2\nu-3)}, ~
\omega_k= 4 \frac{(2k+2\nu-1)(k+\nu-1)}{(k+2\nu-1)(2k+4\nu-1)}.
\end{eqnarray*}
Note that $\mathbf{w}_{im}^k$ in (\ref{nuMethod}) has the same meaning as that in (\ref{DRM}).
We select the Chebyshev method as our special $\nu$-method, i.e., $\nu=1/2$.
Moreover, set $\mathbf{p}^1=\mathbf{p}^0=0$ for the implementation of (\ref{nuMethod}).

The Nesterov's method is defined by~(\cite{Neubauer-2017})
\begin{eqnarray}\label{Nesterov}
\left\{\begin{array}{ll}
\mathbf{z}_{k} = \mathbf{p}^{k} + \frac{k-1}{k+\alpha-1} \left( \mathbf{p}^{k} - \mathbf{p}^{k-1} \right),  \\
\mathbf{p}^{k+1} = \mathbf{z}_{k} - \omega \mathbf{w}_{im}^k,
\end{array}\right.
k= 1,2,\cdots,
\end{eqnarray}
where $\alpha\geq 3$, $\mathbf{w}^k_{im}$  has the same definition as that in (\ref{DRM}) and (\ref{nuMethod}). We apply (\ref{Nesterov}) to Examples 1 and 2 with parameters $\alpha=3$ and $\omega=10$.

\begin{table}[!b]
{\footnotesize
\begin{center}
\begin{tabular}{|c|c|c|c|c|c|c|} \hline
$\delta'$ &
\multicolumn{2}{c|}{$5\%$} &
\multicolumn{2}{c|}{$10\%$} &
\multicolumn{2}{c|}{$20\%$}  \\ \hline
\multicolumn{7}{|c|}{\textbf{Example 1}} \\  \hline
\cline{2-7}
Methods & \textrm{L2Err} & \textrm{IterNum}  & \textrm{L2Err} & \textrm{IterNum}
 & \textrm{L2Err} & \textrm{IterNum} \\ \hline
DRM      & 0.0322 & 369      & 0.0571 & 314	    & 0.1260 & 219   \\
$\nu$    & 0.0164 &	53		 & 0.0491 & 51		& 0.1183 & 47   \\
Nesterov & 0.0279 &	42  	 & 0.0490 & 37	    & 0.0969 & 36    \\
SOAR1    & 0.0316 &	14		 & 0.0484 & 14		& 0.1214 & 10   \\
SOAR2    & 0.0270 & 17		 & 0.0426 & 17		& 0.0909 & 14   \\
SOAR3    & 0.0274 &	19		 & 0.0533 & 16		& 0.1079 & 15   \\
SOAR4    & 0.0274 &	19		 & 0.0420 & 18		& 0.0958 & 16   \\ \hline
\multicolumn{7}{|c|}{\textbf{Example 2}}  \\  \hline
DRM      & 0.1119 & 630	     & 0.1089 & 515		& 0.1215 & 372   \\
$\nu$    & 0.1103 & 124	     & 0.1036 & 123		& 0.1096 & 122  \\
Nesterov & 0.1095 & 87		 & 0.1114 & 44		& 0.1159 & 42    \\
SOAR1    & 0.1123 & 58		 & 0.1095 & 48		& 0.1201 & 36   \\
SOAR2    & 0.1143 & 49		 & 0.1109 & 45		& 0.1219 & 35   \\
SOAR3    & 0.1137 & 20		 & 0.1105 & 20		& 0.1169 & 18  \\
SOAR4    & 0.1137 & 29		 & 0.1152 & 23		& 0.1106 & 20   \\ \hline
\end{tabular}
\end{center}
}
\caption{Comparison with the state-of-the-art methods. }
\label{tab:comparison}
\end{table}

The results of the simulations are presented in Table \ref{tab:comparison}, from which
we conclude that, with properly chosen parameters, all the mentioned methods are stable and can produce satisfactory solutions. Compared with the dynamical regularization method, all of the other methods offer good results with similar accuracy, but require considerably fewer iterations. Particularly, SOAR1--SOAR4 converge even faster than the well-known Nesterov's method and the $\nu$-method. On the whole, for both Examples, the total energy discrepancy function $\chi_{TE}$
leads to more accurate solution than the conventional discrepancy function $\chi$, but with
slightly more iterative numbers.

We finally plot the exact and recovered sources with different methods corresponding to $\delta'=10\%$ in Figure \ref{fig:source11} for Example 1. The counterparts for Example 2
are shown in Figure \ref{fig:source21}. For the conciseness of the paper, we
omit the figures corresponding to $\delta'=5\%$ and $20\%$.

\begin{figure}[tbhp]
\begin{center}
\subfigure{
\includegraphics[width=2.5in]{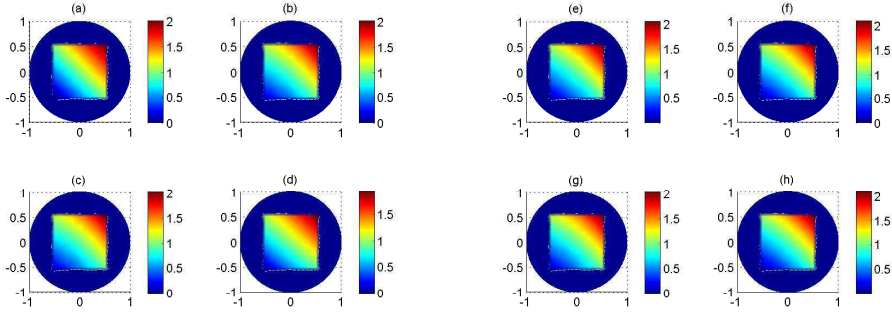}
}
\subfigure{
\includegraphics[width=2.5in]{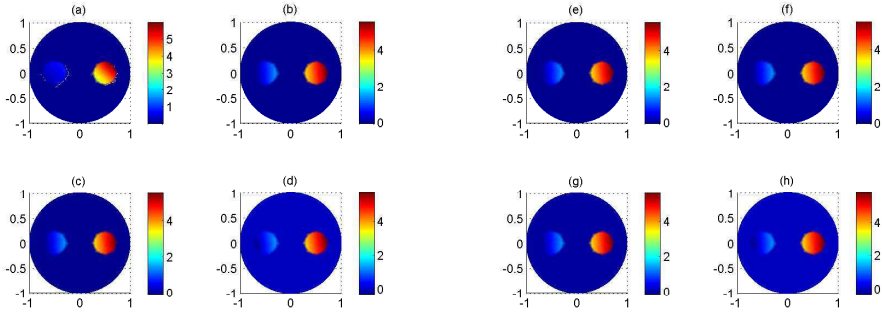}
}
\end{center}
\caption{The true and approximate sources. (a): $p^\dagger$; (b): $p^h$ by DRM;
  (c): $p^h$ by Nesterov's method (d): $p^h$ by $\nu$-method; (e): $p^h$ by SOAR1;
  (f): $p^h$ by SOAR2; (g): $p^h$ by SOAR3; (h): $p^h$ by SOAR4.}
\label{fig:source11}
\end{figure}

\begin{figure}[!htb]
\begin{center}
\subfigure{
\includegraphics[width=2.5in]{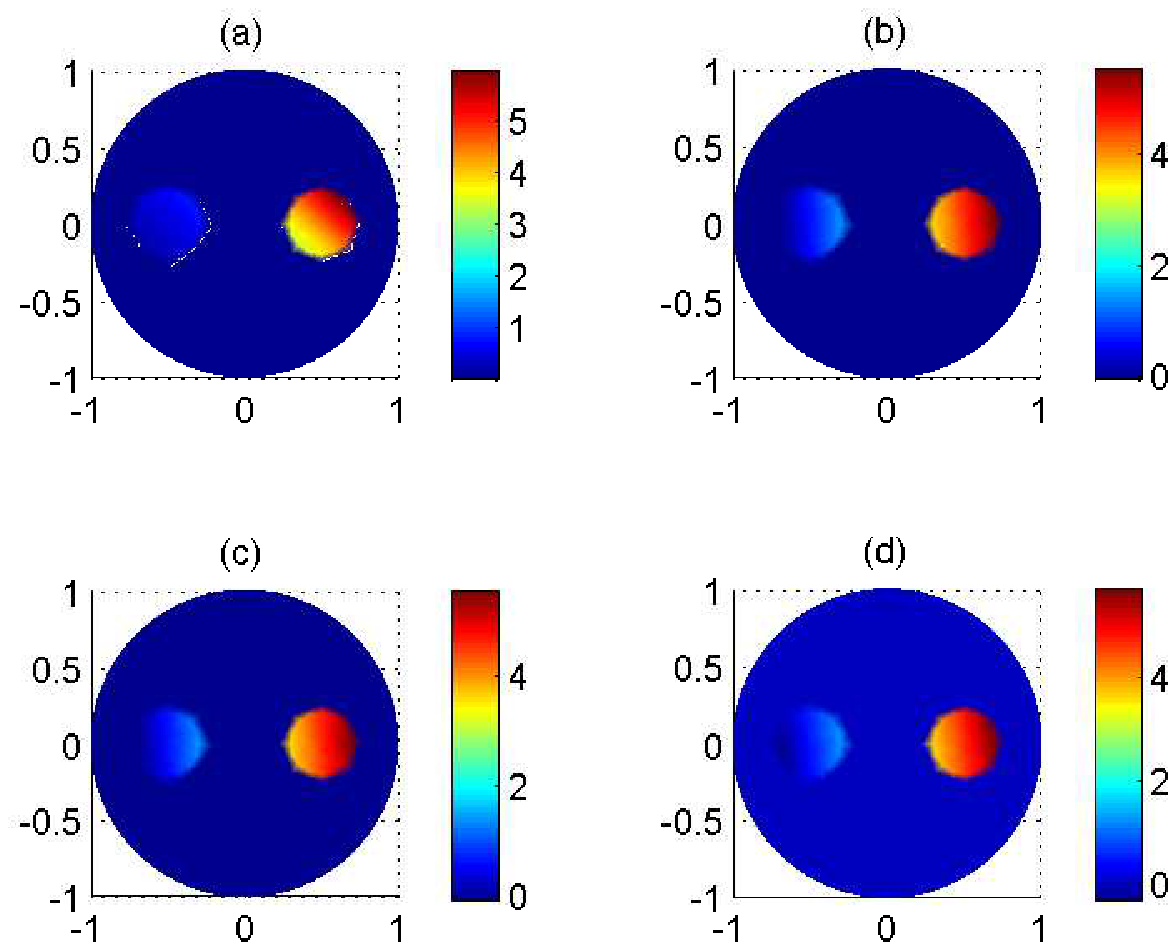}
}
\subfigure{
\includegraphics[width=2.5in]{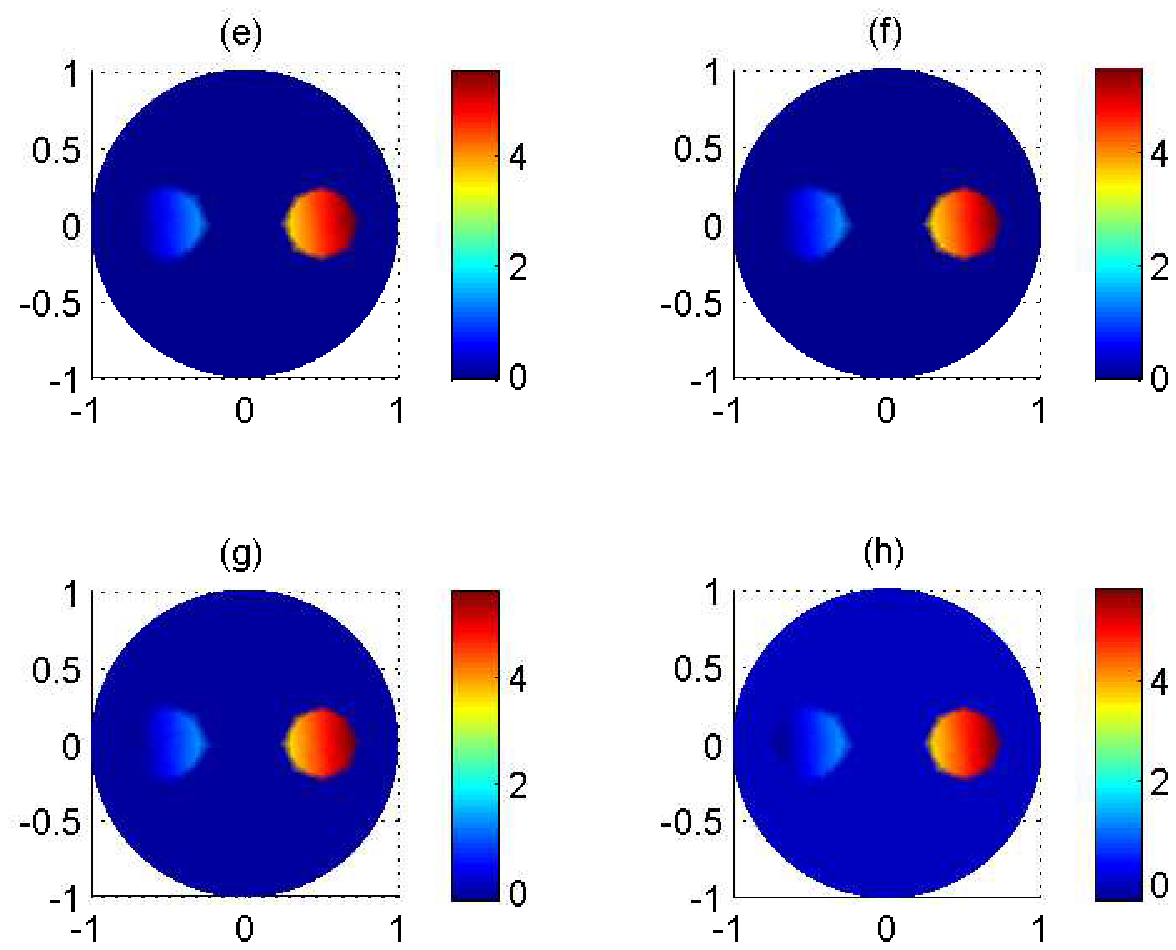}
}
\end{center}
\caption{The true and approximate sources. (a): $p^\dagger$; (b): $p^h$ by DRM;
  (c): $p^h$ by Nesterov's method (d): $p^h$ by $\nu$-method; (e): $p^h$ by SOAR1;
  (f): $p^h$ by SOAR2; (g): $p^h$ by SOAR3; (h): $p^h$ by SOAR4.}
\label{fig:source21}
\end{figure}


\section{Conclusions}

This paper is devoted to developing Second Order Asymptotical Regularization (SOAR) methods for solving inverse source problems of elliptic partial differential equations given Dirichlet
and Neumann boundary data. We show the convergence results of SOAR for both fixed and dynamic damping parameters. A symplectic scheme is applied for the numerical
implementation of SOAR. This scheme yields a novel iterative regularization method. As shown by the numerical results, the proposed SOAR methods are comparable to
the Nesterov's acceleration method and the $\nu$-method about the convergence rate.
Moreover, in this paper, a conventional Morozov's discrepancy principle and a new total
energy discrepancy principle are used for the stop criterion. Numerical experiments demonstrate that, in most cases, the newly developed total energy discrepancy principle works slightly better than the conventional Morozov's discrepancy principle. Similar to the Nesterov's acceleration method, the introduced SOAR can also be used to solve to non-linear ill-posed problems in partial differential equations, which will be the one of the topics of our future work.

\section*{Acknowledgements}

The work of Y. Zhang is supported by the Alexander von Humboldt foundation through
a postdoctoral researcher fellowship. The work of R. Gong is supported by the Natural Science Foundation of China (No. 11401304) and the Fundamental Research Funds for the Central Universities (No. NS2018047)


\bibliographystyle{IMANUM-BIB}
\bibliography{InverseSourceAccelerated}


\appendix

\section*{Appendix A.\ Proof of Theorem \ref{ThmDynamic}}
\label{app1}

Denote $q^\delta=\dot{p}^\delta$, $q^\delta(0)=\dot{p}^\delta(x,0)$, and rewrite (\ref{DDBNandDDS}) as
\begin{equation}\label{FirstOrder}
\left\{\begin{array}{l}
\dot{p}^\delta = q^\delta, \\
\dot{q}^\delta = -\eta q^\delta - w_{im} \chi_{\Omega_0}, \\
p^\delta(0)=p_{0}, q^\delta(0)=\dot{p}_{0}.
\end{array}\right.
\end{equation}

By inequality (\ref{inequalityw}) in Lemma \ref{LemmaV}, $w_{im} \chi_{\Omega_0}$ is continuously dependent on the source term $p$,
hence, by the Cauchy-Lipschitz theorem, the first order nonautonomous system (\ref{FirstOrder})
has a unique global solution for the given initial data $(p_0, \dot{p}_0)$. Furthermore, by the standard arguments in elliptic PDEs theory~\cite{Cheng:2014,Johnson-2009},
the global existence of the source function $p^\delta(x,t)$ implies the existence and uniqueness
of the elliptic PDEs (\ref{prow}) and (\ref{prou}), which completes the proof of the
global existence and uniqueness of the systems (\ref{DDBNandDDS})-(\ref{prou}).

Now, we show the continuity of the solution $p^\delta$ with respect to the boundary data.

For any fixed $t$, define operator $\mathcal{A}: P\to \textbf{H}^1(\Omega)$ through
$\mathcal{A}p(\cdot,t)=\hat{u}(\cdot,t)$ with $\hat{u}(\cdot,t)\in\textbf{H}^1(\Omega)$ being the weak solution of
\[
\left\{\begin{array}{ll}
-\triangle \hat{u}(x,t) +   \hat{u}(x,t)  = p(x,t) \chi_{\Omega_0}, & x\in\Omega,~ t\in (0,\infty), \\
\frac{\partial \hat{u}(x,t)}{\partial \mathbf{n}} + i \hat{u}(x,t) = 0,
& x\in\Gamma,~ t\in (0,\infty).
\end{array}\right.
\]
Denote by $g=g_2+ i g_1$. For any $g\in \textbf{L}^2(\Gamma)$, define operator $\mathcal{B}: \textbf{L}^2(\Gamma)\to \textbf{H}^1(\Omega)$ through
$\mathcal{B}g=\tilde{u}$, where $\tilde{u}\in\textbf{H}^1(\Omega)$ solves
\[
\left\{\begin{array}{ll}
-\triangle \tilde{u}(x) +   \tilde{u}(x)  = 0, & x\in\Omega, \\
\frac{\partial \tilde{u}(x)}{\partial \mathbf{n}} + i \tilde{u}(x) = g,
& x\in\Gamma.
\end{array}\right.
\]

Furthermore, for any $v\in \textbf{H}^1(\Omega)$, we define $I_m: \textbf{H}^1(\Omega)\to H^1(\Omega)$ through $I_m v=v_{im}$. Following standard arguments in the classical PDEs theory, all of $\mathcal{A}, \mathcal{B}$ and $I_m$ are bounded in the corresponding spaces. One the other hand, if we denote $g^\delta=g_2^\delta+i\,g_1^\delta$, we have
\[
w_{im}=I_m w=I_m\mathcal{A}I_m (\mathcal{A}p^\delta+\mathcal{B}g^\delta) =: \mathcal{M}p^\delta+\mathcal{N}g^\delta.
\]
Substitute the above equation  into (\ref{DDBNandDDS}) to obtain
\begin{eqnarray*}\label{DDBNandDDS1}
\left\{\begin{array}{ll}
\ddot{p}^\delta(x,t) + \eta \dot{p}^\delta(x,t) + \mathcal{M} p^\delta(x,t)=-\mathcal{N}g^\delta, & x\in\Omega_0, t\in (0,\infty), \\
p^\delta(x,0)=p_0, \dot{p}^\delta(x,0)=\dot{p}_0, & x\in\Omega_0.
\end{array}\right.
\end{eqnarray*}

If we define $\delta p = p^\delta-p$, it solves
\begin{eqnarray*}\label{DDBNandDDS2}
\left\{\begin{array}{ll}
\ddot{\delta p}(x,t) + \eta \dot{\delta p}(x,t) + \mathcal{M} \delta p(x,t)=-\mathcal{N}(g^\delta-g), & x\in\Omega_0, t\in (0,\infty), \\
\delta p(x,0)=\dot{\delta p}(x,0)=0, & x\in\Omega_0,
\end{array}\right.
\end{eqnarray*}
Applying the Cauchy-Lipschitz theorem again to deduce that for any fixed $t$, $\delta p(\cdot, t)\to 0$ in $P$ when $g^\delta \to g$ in $\textbf{L}^2(\Gamma)$. Consequently, $p^\delta(\cdot,t)\to p(\cdot,t)$ in $P$ as $\delta\to 0$.

\section*{Appendix B.\ Proof of Proposition \ref{LimitNoisyData}}
\label{app2}

The case with the damping parameter $\eta(t)=r/t$ can be performed along the lines and using the tools of the proof of Lemma \ref{LemmaVanishingExactCase2}. Hence, it suffices to show the case with the fixed damping parameter $\eta(t)=\eta$.

Denote by $p^\delta(t)=p^\delta(x,t)$, and define the Lyapunov function of the differential equation (\ref{DDBNandDDS}) by $\mathcal{E}(t) = V(p^\delta(t))+ \frac{1}{2} \|\dot{p}^\delta(t)\|^2_P$. Similar to the proof of Lemma \ref{LemmaVanishingExact}, we have
\begin{equation}
\label{Decreasing}
\dot{\mathcal{E}}(t) = - \eta \|\dot{p}^\delta(t)\|^2_P.
\end{equation}
Hence, $\mathcal{E}(t)$ is non-increasing, and consequently, $\|\dot{p}^\delta(t)\|^2_P\leq 2\mathcal{E}(0)$. Therefore, $\dot{p}^\delta(\cdot)$ is uniform bounded.
Integrating both sides in (\ref{Decreasing}), we obtain
\begin{eqnarray*}
\int^{\infty}_{0} \|\dot{p}^\delta(t)\|^2_P dt \leq  \mathcal{E}(0) / \eta <  \infty,
\end{eqnarray*}
which yields $\dot{p}^\delta(\cdot) \in L^2([0,\infty),P)$.

Now, let us show that for any $p^\dagger\in P$ the following inequality holds.
\begin{eqnarray}\label{supLimit}
\mathop{\lim\sup}_{t\to \infty} V(p^\delta(t)) \leq V(p^\dagger).
\end{eqnarray}

Consider for every $t\in[0,\infty)$ the function $e(t)=e(t;p^\dagger):=\frac{1}{2} \|p^\delta(t) - p^\dagger\|^2_P$. Since $\dot{e}(t)= ( p^\delta(t) - p^\dagger, \dot{p}^\delta(t) )_P $ and $\ddot{e}(t)= \|\dot{p}^\delta(t)\|^2_P+ ( p^\delta(t) - p^\dagger, \ddot{p}^\delta(t) )_P$ for every $t\in[0,\infty)$. Taking into account (\ref{DDBNandDDS}), we get
\begin{equation}
\label{EqError2Sec3}
\ddot{e}(t) + \eta \dot{e}(t) + ( p^\delta(t) - p^\dagger,  u_{im}(p^\delta(t)) )_P = \|\dot{p}^\delta(t)\|^2_P.
\end{equation}

On the other hand, by the convexity inequality of the residual norm square functional $V(p^\delta(t))$, we derive
\begin{eqnarray}\label{convexityIneqSec3}
V(p^\delta(t))  + ( p^\dagger - p^\delta(t), \nabla V(p^\delta(t)) )_P \leq V(p^\dagger).
\end{eqnarray}

Combine (\ref{EqError2Sec3}) and (\ref{convexityIneqSec3}) with the definition of $\mathcal{E}(t)$ to obtain
\begin{eqnarray*}
\ddot{e}(t) + \eta \dot{e}(t) \leq V(p^\dagger) - \mathcal{E}(t) + \frac{3}{2} \|\dot{p}^\delta(t)\|^2_P.
\end{eqnarray*}
By (\ref{Decreasing}), $\mathcal{E}(t)$ is non-increasing, hence, given $t>0$, for all $\tau\in[0,t]$ we have
\begin{eqnarray*}
\label{ProofIneq}
\ddot{e}(\tau) + \eta \dot{e}(\tau) \leq V(p^\dagger) - \mathcal{E}(t) + \frac{3}{2} \|\dot{p}^\delta(\tau)\|^2_P.
\end{eqnarray*}
By multiplying this inequality with $e^{\eta \tau}$ and then integrating from 0 to $\theta$, we obtain
\begin{eqnarray*}
\dot{e}(\theta)\leq e^{-\eta \theta} \dot{e}(0) + \frac{1-e^{-\eta \theta}}{\eta} (V(p^\dagger) - \mathcal{E}(t)) + \frac{3}{2} \int^\theta_0 e^{-\eta(\theta-\tau)} \|\dot{p}^\delta(\tau)\|^2_P d \tau .
\end{eqnarray*}
Integrate the above inequality once more from 0 to $t$ together with the fact that $\mathcal{E}(t)$ decreases, to obtain
\begin{eqnarray}\label{ProofIneq2}
e(t)\leq e(0)+ \frac{1-e^{-\eta t}}{\eta} \dot{e}(0) + \frac{\eta t - 1 + e^{-\eta t}}{\eta^2} (V(p^\dagger) - \mathcal{E}(t)) + h(t),
\end{eqnarray}
where $h(t):= \frac{3}{2} \int^t_0 \int^\theta_0 e^{-\eta(\theta-\tau)} \|\dot{p}^\delta(\tau)\|^2_Pd \tau d \theta$.

Since $e(t)\geq0$ and $\mathcal{E}(t)\geq V(p^\delta(t))$, it follows from (\ref{ProofIneq2}) that
\begin{eqnarray*}
\frac{\eta t - 1 + e^{-\eta t}}{\eta^2} V(p^\delta(t)) \leq e(0)+ \frac{1-e^{-\eta t}}{\eta} \dot{e}(0) + \frac{\eta t - 1 + e^{-\eta t}}{\eta^2} V(p^\dagger) + h(t).
\end{eqnarray*}
Dividing the above inequality by $\frac{\eta t - 1 + e^{-\eta t}}{\eta^2}$ and letting $t\to\infty$, we deduce that
\begin{eqnarray*}
\mathop{\lim\sup}_{t\to \infty} V(p^\delta(t)) \leq V(p^\dagger) + \mathop{\lim\sup}_{t\to \infty} \frac{\eta}{t} h(t).
\end{eqnarray*}

Hence, for proving (\ref{supLimit}), it suffices to show that $h(\cdot)\in L^\infty([0,\infty),\mathcal{X})$. It is obviously held by noting the following inequalities
\begin{eqnarray*}
0 \leq h(t) = \frac{3}{2\eta} \int^t_0 (1- e^{-\eta(t-\tau)}) \|\dot{p}^\delta(\tau)\|^2_Pd \tau \leq \frac{3}{2\eta} \int^\infty_0 \|\dot{p}^\delta(\tau)\|^2_Pd \tau < \infty.
\end{eqnarray*}
From the inequality $V(p^\delta(t)) \geq \inf_{p^\dagger\in P} V(p^\dagger)$, we conclude together with (\ref{supLimit}) that
\begin{equation}
\label{ProofLimit1}
\lim_{t\to \infty} V(p^\delta(t))  =  \inf_{p^\dagger\in P} V(p^\dagger).
\end{equation}
Consequently, we have
\begin{eqnarray*}
\lim_{t\to \infty} V(p^\delta(t)) \leq V(p^\dagger) \leq C^2_0 \delta^2.
\end{eqnarray*}

\end{document}